\documentclass[dvipsnames,12pt,a4paper,reqno]{amsart}
\usepackage[T1]{fontenc}
\usepackage[utf8]{inputenc}
\usepackage[dvipsnames]{xcolor}
\usepackage{graphicx}
\usepackage{tikz-cd}
\usepackage{stmaryrd}
\usepackage[pagebackref]{hyperref}
\hypersetup{
    pdfpagemode=FullScreen,
    colorlinks=true,
    linkcolor=purple,
    filecolor=blue,
    urlcolor=blue,
    citecolor=ForestGreen,
    linktoc=page}

\SetSymbolFont{stmry}{bold}{U}{stmry}{m}{n}
\usepackage[scr=rsfs,bb=pazo]{mathalfa}

\usepackage[alphabetic]{amsrefs}

\usepackage{microtype,amsmath,amsfonts,amssymb,amsthm,mathrsfs,bm}
\makeatletter
\renewcommand*\env@matrix[1][*\c@MaxMatrixCols c]{%
  \hskip -\arraycolsep
  \let\@ifnextchar\new@ifnextchar
  \array{#1}}
\makeatother
\usepackage{systeme}

\theoremstyle{plain}
\newtheorem{theorem}[]{Theorem}
\newtheorem{theorema}{Theorem}

\newtheorem{proposition}[theorem]{Proposition}
\newtheorem{lemma}[theorem]{Lemma}
\newtheorem{conjecturea}{Conjecture}

\newtheorem{corollary}[theorem]{Corollary}

\theoremstyle{definition}
\newtheorem{definition}[theorem]{Definition}
\newtheorem{instance}[theorem]{Example}
\newtheorem*{assumption}{Hypothesis}

\newtheorem{remark}[theorem]{Remark}

\usepackage{enumitem}
\numberwithin{theorem}{section}
\numberwithin{equation}{section}
\setlist[itemize]{leftmargin=2.5em}

\title{On the hyperplane conjecture}
\author{Tsung-Ju Lee}
\address{Tsung-Ju Lee, Department of Mathematics, National Cheng Kung University, Tainan 70101, Taiwan.}
\email{tsungju@gs.ncku.edu.tw}
\date{\today}

\begin{document}
\begin{abstract}
We prove the hyperplane conjecture, proposed by Hosono, Lian, and Yau 
in \cite{1996-Hosono-Lian-Yau-gkz-generalized-hypergeometric-systems-in-mirror-symmetry-of-calabi-yau-hypersurfaces},
for Batyrev's mirror pairs of Calabi--Yau hypersurfaces and 
its orbifold generalization, without assuming 
that the lattice points in
the defining reflexive polytope generate the ambient lattice. 
We then identify the pushforward of the extended hypersurface 
\(I\)-function with a generating series of normalized compact 
periods of the mirror, providing a period interpretation of the hypergeometric 
series governing ambient genus-zero orbifold Gromov--Witten theory.
As an application of our method, we 
also identify the residue of the Gauss--Manin connection
and the cup product morphism on the \(A\)-side.
\end{abstract}
\maketitle
\tableofcontents

\section{Introduction}
\subsection{Motivation}
Mirror symmetry predicted the numbers of rational curves on the 
quintic threefold through period calculations for another family of mirror Calabi--Yau (CY) threefolds
\cite{1991-Candelas-de-la-Ossa-Greene-Parkes-a-pair-of-calabi-yau-manifolds-as-an-exactly-soluable-superconformal-theory}.
Since then, it has attracted sustained interest from mathematicians.
The first mirror pair was written down by 
Greene and Plesser \cite{1990-Greene-Plesser-duality-in-calabi-yau-moduli-space}.
Soon afterward, Batyrev introduced a systematic construction of mirror pairs for
CY hypersurfaces in Gorenstein toric varieties using pairs of reflexive polytopes
\cite{1994-Batyrev-dual-polyhedra-and-mirror-symmetry-for-calabi-yau-hypersurfaces-in-toric-varieties}.
Together with Borisov, they further generalized the construction to CY 
complete intersections in Gorenstein toric varieties via nef-partitions
\cite{1996-Batyrev-Borisov-on-calabi-yau-complete-intersections-in-toric-varieties}
and proved that these pairs are topological mirror pairs
\cite{1996-Batyrev-Borisov-mirror-duality-and-string-theoretic-hodge-numbers}.

The next question is whether they
are quantum mirror pairs. Let \((X,X^{\vee})\) be a topological smooth CY mirror pair.
Such a pair is called a \emph{quantum mirror pair} if \(A(X)\cong B(X^{\vee})\) and 
\(A(X^{\vee})\cong B(X)\). Here 
the \(A\) model is the genus-zero Gromov--Witten (GW) theory, 
whereas the \(B\) model is the variation of Hodge structure (VHS).
In principle, one can verify these identifications by computing
the correlation functions on both sides and showing that they agree
under the mirror map.
When \(X\) is a smooth quintic threefold, \(A(X)\cong B(X^{\vee})\)
was independently established by Givental \cites{1996-Givental-equivariant-gromov-witten-invariants,1998-Givental-a-mirror-theorem-for-toric-complete-intersections}
as well as Lian, Liu, and Yau \cite{1997-Lian-Liu-Yau-mirror-principle-i}.
Since then, the mirror theorems and the quantum Lefschetz hyperplane theorem have extended
these \(A\)-model calculations to broader classes of varieties,
including CY complete intersections in toric varieties.
These techniques allow us to calculate genus zero GW invariants explicitly.

On the \(B\)-side, as observed by Batyrev
\cite{1993-Batyrev-variations-of-the-mixed-hodge-structure-of-affine-hypersurfaces-in-algebraic-tori},
the period integrals for CY hypersurfaces in a Gorenstein toric variety
are governed by a certain type of systems of partial differential equations.
It was introduced by Gel'fand, Graev, Kapranov, and Zelevinskii
\cites{1987-Gelfand-Graev-Zelevinski-holonomic-systems-of-equations-and-series-of-hypergeometric-type,
1989-Gelfand-Kapranov-Zelevinski-hypergeometric-functions-and-toral-manifolds}
as a generalization of hypergeometric structures and is now called a GKZ system.
Although the GKZ system in this circumstance is holonomic,
its solution rank is often too large;
it includes non-period solutions. 
Nevertheless, inspired by mirror symmetry, Hosono, Lian, and Yau
proposed a conjectral recipe for recovering period integrals among the solutions
\cite{1996-Hosono-Lian-Yau-gkz-generalized-hypergeometric-systems-in-mirror-symmetry-of-calabi-yau-hypersurfaces}. 
This is also known 
as the hyperplane conjecture (cf.~Conjecture \ref{conj:HLY}).
The conjecture was tested in many examples in toric geometry
\cites{1995-Hosono-Klemm-Theisen-Yau-mirror-symmetry-mirror-map-and-applications-to-calabi-yau-hypersurfaces,
1995-Hosono-Klemm-Theisen-Yau-mirror-symmetry-mirror-map-and-applications-to-complete-intersection-calabi-yau-spaces}.
For CY hypersurfaces in \(\mathbf{P}^{n}\), it is proven by 
Lian and Zhu \cite{2021-Lian-Zhu-on-the-hyperplane-conjecture-for-periods-of-calabi-yau-hypersurfaces-in-pn}
using tautological systems
\cites{2013-Lian-Song-Yau-periodic-integrals-and-tautological-systems,2013-Lian-Yau-period-integrals-of-cy-and-general-type-complete-intersections};
in which case, the tautological system completely characterizes the
period integrals \cite{2014-Bloch-Huang-Lian-Srinivas-Yau-on-the-holonomic-rank-problem}.

In this paper, we will prove the conjecture in full generality, and its
generalization to CY hypersurfaces in toric Deligne--Mumford stacks.
Such a generalization is desirable, since MPCP desingularizations are
not smooth in higher dimension.

\subsection{Statements of the main results}
Let \(N=\mathbf{Z}^{n}\) be a rank \(n\) lattice and
\(M=\mathrm{Hom}_{\mathbf{Z}}(N,\mathbf{Z})\)
be its dual lattice. Denote by \(N_{\mathbf{R}}\) and
\(M_{\mathbf{R}}\) their scalar extensions.
Batyrev's mirror construction begins with a pair of dual reflexive 
polytopes \((\Delta\subset M_{\mathbf{R}},\nabla=\Delta^{\vee}\subset N_{\mathbf{R}})\)
which gives a pair of toric varieties \((\mathbf{P}_{\Delta},\mathbf{P}_{\nabla})\).
Denote by \(Z\) (resp.~\(\check{Z}\)) a maximal projective crepant partial
(MPCP) desingularization of \(\mathbf{P}_{\Delta}\) (resp.~\(\mathbf{P}_{\nabla}\)).
In general, \(Z\) and \(\check{Z}\) are orbifolds, and we shall 
use \(\mathcal{Z}\) and \(\check{\mathcal{Z}}\) to denote
the corresponding toric Deligne--Mumford stacks.

Let \(\Delta\cap M = \{m_{0}=\mathbf{0},m_{1},\ldots,m_{p}\}\). Define an integral matrix 
\begin{equation}
\label{eq:GKZ-A}
  A:=
  \begin{bmatrix}
    1 & 1 & \cdots & 1\\
    \mathbf{0} & m_{1} & \cdots & m_{p}
  \end{bmatrix}\in\mathrm{Mat}_{(n+1)\times (p+1)}(\mathbf{Z})
\end{equation}
and 
\begin{equation}
  -\delta:=\begin{bmatrix}
  -1\\
  \mathbf{0}
  \end{bmatrix}\in \mathbf{C}^{n+1}.
\end{equation}
As observed by Batyrev
\cite{1993-Batyrev-variations-of-the-mixed-hodge-structure-of-affine-hypersurfaces-in-algebraic-tori}, 
period integrals for
CY hypersurfaces in \(Z\) are governed by a GKZ
system \(M_{A}(-\delta)\). 
We recall the definition of \(M_{A}(\beta)\).
Let \(\beta\in\mathbf{C}^{n+1}\).
Let \(a_{0},a_{1},\ldots,a_{p}\) be coordinates on \(\mathbf{C}^{p+1}\)
labeled by the columns of \(A\) and \(\partial_{i}=
\partial\slash\partial a_{i}\). 
The GKZ system \(M_{A}(\beta)\) consists
of two types of operators:
\begin{itemize}
  \item[\textbf{(a)}] (box operators) \(\partial^{u_{+}}-\partial^{u_{-}}\) for \(u_{\pm}\in\mathbf{Z}^{p+1}_{\ge 0}\)
  with \(Au_{+}=Au_{-}\).
  \item[\textbf{(b)}] (Euler operators) \(\sum_{j=0}^{p} A_{ij}a_{j}\partial_{j}-\beta_{i}\) for \(i=1,\ldots,n+1\).
\end{itemize}
Let \(L:=\operatorname{Ker}(A)\cap\mathbf{Z}^{p+1}\). 
The naive Gamma-series solution
\begin{equation}
\label{eq:hol-zero-sol}
  \frac{1}{a_0}\sum_{\substack{\ell\in L,\ell_0\le0}}
  \frac{1}{\Gamma(\ell_{0})\prod_{i>0}\Gamma(1+\ell_i)}a^{\ell}
\end{equation}
is identically zero. Instead, Hosono, Lian, and Yau introduced
a regularized holomorphic solution 
\cite{1996-Hosono-Lian-Yau-gkz-generalized-hypergeometric-systems-in-mirror-symmetry-of-calabi-yau-hypersurfaces}
\begin{equation}\label{eq:HLYrenormalized-b-series}
 \omega(a)
 =\frac{1}{a_0}\sum_{\substack{\ell\in L,\ell_0\le0}}(-1)^{\ell_0}
 \frac{\Gamma(1-\ell_0)}{\prod_{i>0}\Gamma(1+\ell_i)}a^{\ell}.
\end{equation}
and showed that \(\omega(a)\) is non-zero
and is the holomorphic series
solution, unique up to scalar, around the so-called large complex structure limit (LCSL) point
\cite{1997-Hosono-Lian-Yau-maximal-degeneracy-points-of-gkz-systems}.
Assume for simplicity that \(\check{\mathcal{Z}}\) is smooth, and 
we will simply write \(\check{Z}\) for it.
Owing to the duality construction, the indicial ring of \(M_{A}(-\delta)\)
is shown to be isomorphic to the cohomology ring \(R=\mathrm{H}^{\bullet}(\check{Z};\mathbf{C})\)
of \(\check{Z}\). Consequently, using \(R\),
Hosono, Lian, and Yau implemented the generalized Frobenius method to
construct the full set of solutions to \(M_{A}(-\delta)\) near the LCSL point.

In practice, they deformed \(\omega(a)\) by the cohomology classes in \(R\)
and obtain a cohomology-valued series
\begin{equation}
\label{eq:HLY-renormalized-b-series}
 \mathcal{B}(a)
 =\frac{1}{a_0}\sum_{\substack{\ell\in L,\ell_0\le0}}(-1)^{\ell_0}
 \frac{\Gamma(1-\ell_0-D_0)}
 {\prod_{i>0}\Gamma(1+\ell_i+D_i)}a^{\ell+D}
\end{equation}
where \(D_{i}\) is the divisor class corresponding to the ray \(m_{i}\) for
\(1\le i\le p\) and \(D_{0}=-\sum_{i=1}^{p}D_{i}=-e\).
It can be checked that the coefficients generate the
set of local solutions to \(M_{A}(-\delta)\).

However, as we have mentioned, by a dimension count, it is clear that
not every solution to \(M_{A}(-\delta)\) comes from a period integral.
Hosono, Lian, and Yau subsequently conjectured that 
\begin{conjecturea}[The hyperplane conjecture]
\label{conj:HLY}
As a \(\mathrm{H}^{\bullet}(\check{Z};\mathbf{C})\)-valued series, the coefficients of
\begin{equation}
  \mathcal{B}(a)\cup [-c_{1}(K_{\check{Z}})]
\end{equation}
generate the full set of period integrals of CY hypersurfaces in \(Z\)
from vanishing cycles near the LCSL point.
\end{conjecturea}
Said differently, cupping with the anticanonical class
of \(\check{Z}\) restores period integrals for 
CY hypersurfaces in \(Z\) among the solutions.
The purpose of this manuscript is to prove the conjecture. 
In fact, we will not only prove the conjecture but also
its generalization to toric Deligne--Mumford stacks. In a nutshell,
we prove the conjecture for
a \(\mathrm{H}^{\bullet}_{\mathrm{CR}}(\check{\mathcal{Z}};\mathbf{C})\)-valued series 
as well as periods of residues of meromorphic top forms with higher order poles
under a weaker hypothesis which we are going to explain now.

Begin with a reflexive polytope \(\Delta\subset M_{\mathbf{R}}\). Let 
\begin{equation}
  \{m_{0}=\mathbf{0},m_{1},\ldots,m_{p}\}\subset \Delta\cap M
\end{equation}
be a subset and \(f_{a}:=\sum_{i=0}^{p} a_{i}x^{m_{i}}\) be a Laurent polynomial.
Let \(C\) be the cone over \(\Delta\), i.e., 
\begin{equation}
  C:=\operatorname{Cone}(\Delta\times\{1\}).
\end{equation}
Let \(\check{\Sigma}\) be a coherent simplicial star fan, i.e., every maximal
cone contains the distinguished vector \((\mathbf{0},1)\), 
whose support is \(C\). In this paper, we assume that 
the ray monomials occuring in \(f_{a}\) are exactly the \(1\)-cones
in \(\check{\Sigma}\) and \(f_{a}\) is nondegenerate.
We view \(f_{a}\) as a global section in 
\(\mathrm{H}^{0}(Z,-K_{Z})\) or simply a Laurent polynomial
interchangably.
To study the solutions to \(M_{A}(-\delta)\), we replace \(Z\) by
a toric resolution.
Let \(Y_{a}\subset T\) be the affine hypersurface defined by
the Laurent polynomial \(f_{a}\) and \(E_{a}=\overline{Y}_{a}\)
and \(U_{a}=Z\setminus E_{a}\). 
Denote by \(D\) the toric boundary in \(Z\).

For the matrix \(A\) as in \eqref{eq:GKZ-A}, Borisov and Horja introduce
the notion of better behaved GKZ systems. Roughly speaking, a solution of the better behaved GKZ system
is a family of scalar functions \(\{\Phi_{c}(a)\}_{c\in C\cap(M\times\mathbf{Z})}\)
such that
\begin{equation}
\begin{cases}
  \partial_{a_{i}}\Phi_{c}=\Phi_{c+v_{i}}\\
  \sum_{i} a_{i}\langle \mu,v_{i}\rangle
  \Phi_{c+v_{i}}+\langle 
  \mu,c-\beta\rangle\Phi_{c}=0,~\mu\in N\times\mathbf{Z}.
\end{cases}
\end{equation}
Here \(v_{i}=(m_{i},1)\) and \(a=(a_{0},a_{1},\ldots,a_{p})\) is the 
variable associated to the columns of \(A\).
Each \(\Phi_{c}\) is a solution to the GKZ system \(M_{A}(\beta-c)\).
The projection of \(\check{\Sigma}\) to \(M_{\mathbf{R}}\) is a complete fan
which defines a toric Deligne--Mumford stack \(\check{\mathcal{Z}}\); each
\(v_{i}\) determines a toric divisor \(D_{i}\).
One can construct an indexed family of \(\mathrm{H}^{\bullet}_{\mathrm{CR}}(
\check{\mathcal{Z}};\mathbf{C})\)-valued series 
\(\{\mathcal{B}_{c}^{\mathrm{raw}}(a)\}_{c\in C\cap(M\times\mathbf{Z})}\)
for which \(\{\lambda(\mathcal{B}_{c}^{\mathrm{raw}})\}_{c\in C\cap(M\times\mathbf{Z})}\) 
with arbitrary constant functional \(\lambda\colon \mathrm{H}^{\bullet}_{\mathrm{CR}}(
\check{\mathcal{Z}};\mathbf{C}) \to\mathbf{C}\) 
are solutions to the better behaved GKZ system with \(\beta=0\).
It could happen that the naive power series solution (cf.~\eqref{eq:hol-zero-sol})
to \(M_{A}(\beta-c)\) is identically zero. Following Hosono, Lian, and Yau,
we consider a regularized solution and consider its 
corresponding \(\mathrm{H}^{\bullet}_{\mathrm{CR}}(
\check{\mathcal{Z}};\mathbf{C})\)-valued series \(\mathcal{B}_{c}(a)\). It is straightforward
to check that 
\begin{equation}
  \mathcal{B}_{c}^{\mathrm{raw}}(a) = -e\cup \frac{\sin(\pi e)}{\pi e}\cup \mathcal{B}_{c}(a).
\end{equation}
where \(e\) is the first Chern class of \(\check{\mathcal{Z}}\) and \(c\in C\cap(M\times\mathbf{Z})\) is an 
interior lattice point.

Integral points in \(C\) have
another interpretation.
To each \(c=(m,k)\in C\cap (M\times\mathbf{Z})\) with \(k\ge 1\) we associate a
meromorphic top form
\begin{equation}
  \omega_{c}:=(-1)^{k-1}\Gamma(k)\frac{x^{m}}{f_{a}^{k}}\bigwedge_{i=1}^{n}\frac{\mathrm{d}x_{i}}{x_{i}}.
\end{equation}
When \(c\) is an \emph{interior} lattice point, one can check 
\(\omega_{c}\) can be extended to a meromorphic 
top form on \(Z\) with poles along \(E_{a}\). Set \(B_{a}=U_{a}\cap D\).
Consider the image of the morphism (over \(\mathbf{C}\))
\begin{equation}
 W:=\operatorname{Im}\left(\mathrm{H}_n(Z,E_{a})\xrightarrow{\partial}
 \mathrm{H}_{n-1}(E_{a})
 \xrightarrow{\operatorname{Tube}}\mathrm{H}_n(U_{a})
 \xrightarrow{q_{a}}\mathrm{H}_n(U_{a},B_{a})\right).
\end{equation}
For each interior integral point \(c\), we now have two maps
\begin{equation}
    \mathcal{P}_{\mathcal{B}_{c}}\colon \mathrm{H}^{\bullet}_{\mathrm{CR}}(
  \check{\mathcal{Z}};\mathbf{C})\to\mathfrak{Sol}(M_{A}(-c)),~\alpha\mapsto 
  \langle\alpha\cup \mathcal{B}_{c}\rangle_{\mathrm{CR}}
\end{equation}
and
\begin{equation}
    J_{c}\colon \mathrm{H}_n(U_{a},B_{a};\mathbf{C})\to \mathfrak{Sol}(M_{A}(-c)),~\gamma\mapsto \int_{\gamma}\omega_{c}.
\end{equation}
Our first main result is the following.
\begin{theorema}[=Theorem \ref{thm:seriesimage}]
\label{thm:main-1-intro}
For each interior integral point \(c\), we have the equality
\begin{equation}
  \mathcal{P}_{\mathcal{B}_{c}}\left(e\cup \mathrm{H}^{\bullet}_{\mathrm{CR}}(
  \check{\mathcal{Z}};\mathbf{C})\right) = J_{c}(W).
\end{equation}
\end{theorema}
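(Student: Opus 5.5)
The plan is to compare both sides inside the full solution space \(\mathfrak{Sol}\) of the better behaved GKZ system, where the relevant objects become transparent. Work first with the raw series \(\mathcal{B}^{\mathrm{raw}}_{c}\). By the Borisov--Horja description, every solution of the better behaved system with \(\beta=0\) near the LCSL point has the form \(\{\lambda(\mathcal{B}^{\mathrm{raw}}_{c})\}_{c}\) for a unique functional \(\lambda\) on \(\mathrm{H}^{\bullet}_{\mathrm{CR}}(\check{\mathcal{Z}};\mathbf{C})\). Dually, the full relative homology \(\mathrm{H}_n(U_a,B_a)\), or equivalently \(\mathrm{H}_n(T, Y_a)\) suitably compactified, with the forms \(\omega_c\) extended to all \(c\in C\cap(M\times\mathbf{Z})\), gives such solutions as well. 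The family \(\{\int_\gamma \omega_c\}_c\) satisfies the better behaved equations: \(\partial_{a_i}\omega_c=\omega_{c+v_i}\) follows from the normalization \((-1)^{k-1}\Gamma(k)\), and the Euler relations follow from exactness of \(d\) of contracted forms. Matching dimensions gives an isomorphism
\[
\Psi\colon \mathrm{H}_n(U_a,B_a)^{\vee\vee}\cong \mathrm{H}^{\bullet}_{\mathrm{CR}}(\check{\mathcal{Z}})^{\vee}.
\]
This uses the known rank of the better behaved system, namely the normalized volume, which equals \(\dim \mathrm{H}^{\bullet}_{\mathrm{CR}}\) by the orbifold Batyrev/Stapledon counting.

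Next, identify \(W\) under \(\Psi\). The cycles in \(W\) are tubes over boundaries of relative cycles. For such \(\gamma=\operatorname{Tube}(\partial\sigma)\), the residue formula gives \(\int_\gamma\omega_c=2\pi i\int_{\partial\sigma}\operatorname{Res}\omega_c\). These periods therefore vanish on the classes killed by the tube map: the pairing factors through cycles supported near the hypersurface, with monodromy-invariant parts removed. On the \(\mathcal{B}\)-side, I will show that the functionals \(\lambda=\langle\alpha\cup e\cup -\rangle\) are exactly those annihilating the kernel of cup product with \(e\).

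Concretely, I will compute that \(\Psi(W)\) corresponds to \(\operatorname{Im}(e\cup)\) in two steps. First, the image of \(q_a\circ\operatorname{Tube}\circ\partial\) is the kernel of \(\mathrm{H}_n(U_a,B_a)\to \mathrm{H}_n(Z,D)\)-type maps. This comes from the long exact sequences of the pairs \((Z,E_a)\) and \((U_a,B_a)\) together with the Thom isomorphism. Second, the "constant" functional, which pairs with \(\int_{\text{torus}}\), corresponds to the point class, and the induced filtration matches the multiplication-by-\(e\) structure. The cleanest route is monodromy. Monodromy around the LCSL point acts on the \(\mathcal{B}\)-side by \(\exp(2\pi i D_j)\cup\). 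On the homology side, \(W\) is characterized (after the residue/tube identification) as the image of a logarithm-type operator, or as the orthogonal complement of the cycles coming from \(\mathrm{H}_n(D)\). I will match these two characterizations.

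Then convert from raw to regularized series. By the displayed relation \(\mathcal{B}^{\mathrm{raw}}_c=-e\cup\frac{\sin\pi e}{\pi e}\cup\mathcal{B}_c\), and since \(\frac{\sin\pi e}{\pi e}\) is invertible in the nilpotent-extended ring, we have
\[
\mathcal{P}_{\mathcal{B}^{\mathrm{raw}}_c}(\mathrm{H}^{\bullet}_{\mathrm{CR}})=\mathcal{P}_{\mathcal{B}_c}(e\cup\mathrm{H}^{\bullet}_{\mathrm{CR}}).
\]
It therefore suffices to show that \(J_c(W)\) equals the span of the coefficients of \(\mathcal{B}^{\mathrm{raw}}_c\) for interior \(c\). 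For interior \(c\) the form \(\omega_c\) has no poles along \(D\), so \(\int_\gamma\omega_c\) depends only on the image of \(\gamma\) modulo \(B_a\). The raw solutions, evaluated at the interior \(c\), then span exactly the periods over \(W\).

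The key comparison step is an explicit construction. I will build cycles, namely a torus cycle and its deformations (Lefschetz thimble / Gamma-class type cycles), and compute \(\int\omega_c\) by expanding \(1/f_a^k\) near the LCSL point. This shows that their periods equal \(\lambda(\mathcal{B}^{\mathrm{raw}}_c)\) for an explicit \(\lambda\), and that these cycles lie in \(W\). The main obstacle is this step: proving that the cycles obtained this way exhaust \(W\), and not merely lie in it. Inclusion is easier. For equality, I will compare dimensions, using that \(\dim W\) equals the rank of \(e\cup\) on \(\mathrm{H}^{\bullet}_{\mathrm{CR}}\), computed via the weight filtration or Stapledon's weighted \(\delta\)-vector. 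I will also use injectivity of \(J_c\) on \(W\), which follows from nondegeneracy of \(f_a\), meaning \(\omega_c\) generates cohomology for interior \(c\) by Batyrev's Jacobian ring description.
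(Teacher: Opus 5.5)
Your raw-to-regularized reduction (reflection formula, invertibility of \(\sin(\pi e)/(\pi e)\), Frobenius property) and your use of completeness of the full raw family agree with the paper. The proof fails at the step that upgrades \(\operatorname{Span}\{\lambda(\mathcal{B}^{\mathrm{raw}}_c)\}\subseteq J_c(W)\) to an equality. You close it with \(\dim W=\operatorname{rank}(e\cup-)\) plus injectivity of \(J_c\) on \(W\). But \(J_c\) is in general \emph{not} injective on \(W\), so no dimension count done one \(c\) at a time can work.

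When the \(m_i\) span a proper sublattice, the finite group \(G=\{t\in T\mid t^{m_i}=1\ \text{for all } i\}\) preserves \(f_a\), and each \(\omega_c\) is a \(G\)-eigenform. Hence \(J_c\) kills every isotypic component of \(W\) except one. Two examples:
\begin{itemize}
\item For the Dwork pencil \(a_0+\sum_{i=1}^5a_ix^{m_i}\) on the vertices of the quintic polytope, \(\dim W=\dim\mathrm{H}_3(E_a)=204\), while \(J_\delta(W)\) is the familiar \(4\)-dimensional space of periods of the holomorphic \(3\)-form.
\item In Remark~\ref{rmk:ex-generate} with \(c=((1,0),2)\), \(G\) acts on the Hesse cubic by translations, hence trivially on \(\mathrm{H}^1(E_a)\). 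So \(J_c(W)=0\) although \(\dim W=2\).
\end{itemize}
A single \(\omega_c\) does not generate the cohomology; only the whole family does. The identification must therefore come from one functional serving all \(c\) simultaneously.

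That is the missing idea, and it also needs the right duality. The raw family is dual to \(\mathrm{H}_n(T,Y_a)\) through the residue pairing \(\int_{\partial\Gamma}\operatorname{Res}_{Y_a}\omega_c\) of Proposition~\ref{prop:relativederham}, not to \(\mathrm{H}_n(U_a,B_a)\). The latter pairs only with interior \(c\) and sees only \(Q^\circ\). If the raw functionals reproduced all periods over \(\mathrm{H}_n(U_a,B_a)\), then at \(c=\delta\) in the lattice-generated smooth case they would span the full \(\dim R\)-dimensional solution space of \(M_A(-\delta)\) (Lee--Zhang). Yet \(\mathcal{P}_{\mathcal{B}_\delta}(e\cup R)\) has dimension at most \(\operatorname{rank}(e\cup-)<\dim R\).

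With the correct duality, \(\Gamma_\lambda\mapsto q_a\operatorname{Tube}(j_\ast\partial\Gamma_\lambda)\) gives the inclusion immediately; no thimble or Gamma-class cycles are needed. The reverse inclusion is Lemma~\ref{lem:compact-period-factor}, which runs as follows:
\begin{itemize}
\item \(K_a=\ker j_a^\ast\) is spanned by Gysin images from the boundary strata.
\item \(\eta_c\perp K_a\), because \(\omega_c\) restricts to zero on each \(D_i\).
\item The polarization gives a flat splitting \(\mathrm{H}^d(E_a)=K_a\oplus K_a^\perp\).
\item \(\ker\kappa_a=\ker j_a^\ast\) by a weight argument.
\end{itemize}
Hence every compact period \(\int_\beta\eta_c\) is the value on \(\kappa_a[\eta_c]\) of a flat functional on \(\mathrm{H}^n(T,Y_a)\) that is independent of \(c\). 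Completeness then writes this functional as \(\lambda(\mathcal{B}^{\mathrm{raw}}_\bullet)\).

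If you want to keep a counting argument, it has to be run for the whole interior family \(\gamma\mapsto(\int_\gamma\omega_c)_{c\in C^\circ}\), which is injective on \(W\) by Lemma~\ref{lem:interior-derham}. You would then still need an independent proof of \(\dim W\le\operatorname{rank}(e\cup-)\). You only gesture at this, and the paper obtains it as a corollary of the argument rather than using it as an input.
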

By taking \(c=\delta=(0,1)\), we obtain the following.

\begin{theorema}[=Theorem \ref{thm:HLY-hyperplane}]
\label{thm:main-2-intro}
Conjecture \ref{conj:HLY} holds.
\end{theorema}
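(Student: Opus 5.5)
The statement to prove is Theorem~\ref{thm:main-2-intro}, which I would derive as the special case \(c=\delta=(\mathbf{0},1)\) of Theorem~\ref{thm:main-1-intro}. The plan has three parts: match both sides of Theorem~\ref{thm:main-1-intro} with the two sides of Conjecture~\ref{conj:HLY}, and then show that the topological image \(W\) is exactly the span of vanishing-cycle periods.

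\textbf{Step 1 (the series side).} For \(c=\delta\) the lattice point is interior; it is the unique interior point of \(C\) at height one, since \(\Delta\) is reflexive. The form is
\begin{equation*}
\omega_{\delta}=\frac{1}{f_{a}}\bigwedge_{i=1}^{n}\frac{\mathrm{d}x_{i}}{x_{i}},
\end{equation*}
the standard Batyrev form, and \(M_{A}(-\delta)\) is the period GKZ system. I would check directly from the definitions that the regularized series \(\mathcal{B}_{\delta}\) coincides with the Hosono--Lian--Yau series \eqref{eq:HLY-renormalized-b-series}. When \(\check{Z}\) is smooth, \(\mathrm{H}^{\bullet}_{\mathrm{CR}}=\mathrm{H}^{\bullet}\), and \(e=\sum D_{i}=-c_{1}(K_{\check{Z}})\). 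The coefficients of \(\mathcal{B}(a)\cup[-c_{1}(K_{\check{Z}})]\) are exactly the functions \(\langle\alpha\cup e\cup\mathcal{B}_{\delta}\rangle\) as \(\alpha\) ranges over a basis. This uses Poincaré duality of the pairing \(\langle\cdot\rangle_{\mathrm{CR}}\), since extracting coefficients in a basis is the same as pairing with the dual basis. So the span of these coefficients is \(\mathcal{P}_{\mathcal{B}_{\delta}}(e\cup\mathrm{H}^{\bullet})\).

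\textbf{Step 2 (the period side).} By Theorem~\ref{thm:main-1-intro}, the space from Step 1 equals \(J_{\delta}(W)\). It remains to identify \(J_{\delta}(W)\) with the periods of \(\operatorname{Res}\omega_{\delta}\) over cycles on the CY hypersurface \(E_{a}\) near the LCSL point. The mechanism is the residue formula: for \(\sigma\in\mathrm{H}_{n-1}(E_{a})\),
\begin{equation*}
\int_{\operatorname{Tube}(\sigma)}\omega_{\delta}=2\pi i\int_{\sigma}\operatorname{Res}\omega_{\delta}.
\end{equation*}
The image of \(\partial\colon\mathrm{H}_{n}(Z,E_{a})\to\mathrm{H}_{n-1}(E_{a})\) is the kernel of \(\mathrm{H}_{n-1}(E_{a})\to\mathrm{H}_{n-1}(Z)\), which is precisely the space of vanishing cycles. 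Finally, because \(\omega_{\delta}\) has at most logarithmic behaviour along \(D\) and extends holomorphically across \(B_{a}\), integration over the image under \(q_{a}\) agrees with integration in \(\mathrm{H}_{n}(U_{a})\). Hence \(J_{\delta}(W)\) is the span of the periods \(\int_{\sigma}\operatorname{Res}\omega_{\delta}\) over vanishing cycles \(\sigma\), which is the statement of the conjecture.

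\textbf{Step 3 (the main obstacle).} I expect the difficulty to lie in the well-definedness of \(J_{\delta}\) on the relative class \(q_{a}(\cdot)\). One must check that \(\omega_{\delta}\) restricted to \(B_{a}\) vanishes, so that relative cycles give well-defined integrals; I would verify this in local toric charts, where \(\mathrm{d}x/x\) restricts to zero on boundary strata of lower dimension. One must also check that the conjecture's phrase ``near the LCSL point'' is compatible with the \(a\)-domain on which the series converge. That compatibility I would take from the convergence established alongside Theorem~\ref{thm:main-1-intro}.
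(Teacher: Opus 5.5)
Your proposal is correct and follows the paper's route: Theorem~\ref{thm:HLY-hyperplane} is the specialization of Theorem~\ref{thm:seriesimage} to \(c=\delta\), with the coefficient span of \(e\cup\mathcal{B}_\delta\) identified with \(\mathcal{P}_{\mathcal{B}_\delta}(e\cup R)\) by Poincar\'e duality as in Proposition~\ref{prop:Bimagecriterion}, and \(J_\delta(W)\) read off from the definition of \(W\) as the span of residue periods over \(\operatorname{Im}\partial=\operatorname{Ker}\bigl(\mathrm{H}_{n-1}(E_a)\to\mathrm{H}_{n-1}(Z)\bigr)\). One small correction to your Step 3: \(\omega_\delta\) is holomorphic on \(U_a\) rather than merely logarithmic along \(D\), and it restricts to zero on \(B_a\) by type, being an \((n,0)\)-form on an \((n-1)\)-dimensional complex manifold.
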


To prove Theorem \ref{thm:main-1-intro}, two spaces to compare are
\begin{equation}
\label{eq:vs-to-compare-sol}
 \mathrm{H}_{n}(U_{a},B_{a};\mathbf{C})
 ~\text{and}~
 \mathrm{H}_{\mathrm{CR}}^\bullet(\check{\mathcal Z};\mathbf{C}).
\end{equation}
By the result in \cite{2020-Lee-Zhang-a-hypergeometric-systems-and-relative-cohomology},
the homology on the left hand side of \eqref{eq:vs-to-compare-sol}
can be identified with the solution space to 
the GKZ system \(M_{A}(-\delta)\) when the columns of \(A\) generate
the full lattice and the ambient space \(Z\) is smooth, while the right hand side 
is the space of cohomology insertions
for the \(A\)-side genus-zero Gromov--Witten invariants for 
the mirror.



For the left hand side in 
the equation \eqref{eq:vs-to-compare-sol}, by a version of Poincar\'{e} 
duality (cf.~\cite{2017-Huber-Muller-Stach-periods-and-nori-motives}*{Theorem 2.4.5}), 
we have an isomorphism
\begin{equation}
\label{eq:coh-per-comparison}
  \mathrm{H}_{n}(U_{a},B_{a};\mathbf{C})\cong \mathrm{H}^{n}(T,Y_{a};\mathbf{C}).
\end{equation}
To prove the conjecture, we need a
comparison between 
\begin{equation}
\label{eq:iso-vs-need}
  \mathrm{H}^{n}(T,Y_{a};\mathbf{C})~\mbox{and}~\mathrm{H}^{\bullet}_{\mathrm{CR}}(\check{\mathcal{Z}};\mathbf{C}).
\end{equation}

Via a relative de Rham comparison,
the left hand side corresponds to a 
Koszul complex of logarithmic twisted operators on the semigroup
algebra \(S:=\mathbf{C}[C\cap (M\times\mathbf{Z})]\); this was
observed by Batyrev. Owing to this interpretation, we obtain a 
correspondence 
\begin{equation}
[c]\mapsto \mathcal{B}_{c}^{\mathrm{raw}}(a)
\end{equation}
which induces a map
\begin{equation}
\mathrm{H}^{\bullet}_{\mathrm{CR}}(\check{\mathcal{Z}};\mathbf{C})^{\vee}
\ni\lambda\mapsto
\bigl([c]\mapsto\lambda(\mathcal{B}_c^{\mathrm{raw}}(a))\bigr)
\in \mathrm{H}_{n}(T,Y_{a};\mathbf{C}).
\end{equation}
We will show that \([c]\mapsto\lambda(\mathcal{B}_c^{\mathrm{raw}}(a))\)
is the functional on \(\mathrm{H}^{n}(T,Y_{a};\mathbf{C})\)
given by 
\begin{equation}
  [c]\mapsto \lambda(\mathcal{B}_{c}^{\mathrm{raw}}(a))=\int_{\partial\Gamma_\lambda(a)}
\operatorname{Res}_{Y_a}\omega_{c}.
\end{equation}
for some relative cycle
\(\Gamma_{\lambda}(a)\in 
\mathrm{H}_n(T,Y_a;\mathbf{C})\)
and therefore it connects to a period.
Note that at the distinguished index
\(\delta\), its value is 
\begin{equation}
  \int_{\partial\Gamma_{\lambda}(a)}
  \operatorname{Res}_{Y_a}\frac{\Omega}{f_a}.
\end{equation}
Conversely, after
choosing a smooth projective model, a Hodge-theoretic splitting
shows that every compact period 
comes in this way. 
The completeness of the full indexed family identifies every 
horizontal functional on relative cohomology with a constant 
coefficient functional. The said Hodge-theoretic splitting then 
yields the reverse inclusion of period spaces
and hence Conjecture
\ref{conj:HLY}.


To establish completeness, we need a dimension count.
Nevertheless, we will develop more: we will construct a family of
twisted logarithmic de Rham complexes \(\mathcal{K}^\bullet\)
over an open subset of \(\operatorname{Spec}\mathbf{C}[s]\)
containing \(s=0\) and \(s=1\). Its fiber cohomology at \(s=1\)
gives \(\mathrm{H}^n(T,Y_a;\mathbf C)\), while its fiber
cohomology at \(s=0\) is isomorphic to
\(\mathrm{H}^\bullet_{\mathrm{CR}}(\check{\mathcal{Z}};\mathbf{C})\).
This gives a noncanonical identification between the two
vector spaces and hence the equality of their dimensions.
Note that an abstract isomorphism alone does not identify the prescribed
series with geometric periods; this compatibility comes from
the indexed family and the relative de Rham comparison.

As a result, we can show that the pushforward of the 
extended \(I\)-function for CY hypersurfaces in \(\check{\mathcal{Z}}\)
is equivalent to the generating series of compact periods for \(Y\subset\mathcal{Z}\).
Without introducing technical terms, our second main result can be stated as follows.

\begin{theorema}[=Theorem~\ref{thm:extended-i-and-periods}]
Under a change of variables and an invertible sectorwise normalization map, 
the pushforward of the extended \(I\)-function for 
CY hypersurfaces \(\check{Y}\subset\check{\mathcal{Z}}\) is
identified with the generating series of compact periods for \(Y\subset \mathcal{Z}\).
\end{theorema}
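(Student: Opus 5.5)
The plan is to reduce the statement to two comparisons that the paper already sets up: the $A$-side comparison $I\leftrightarrow\mathcal{B}$ and the $B$-side comparison $\mathcal{B}\leftrightarrow$ periods, the latter being Theorem \ref{thm:main-1-intro}.

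First, I would write the extended $I$-function of $\check{Y}\subset\check{\mathcal{Z}}$ explicitly. It has the form of a sum over the extended Mori cone, or equivalently over $\ell\in L$ with $\ell_0\le 0$, indexed also by twisted sectors. The hypersurface twist contributes the factor $\prod_{k=1}^{-\ell_0}(e+kz)$, and each toric divisor $D_i$ contributes the standard ratio of Gamma-type products in $D_i+kz$. The pushforward $\iota_*$ to $\check{\mathcal{Z}}$ multiplies by $e$. After setting $z=1$, which is harmless by homogeneity, I would use
\[
\prod_{k=1}^{-\ell_0}(e+k)=\frac{\Gamma(1-\ell_0+e)}{\Gamma(1+e)}
\]
and the reflection formula to rewrite $\Gamma(1-\ell_0+e)$ as $(-1)^{\ell_0}\Gamma(1-\ell_0-D_0)$ up to an $\ell$-independent factor. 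This factor is of the form $\sin(\pi e)/(\pi e)$ times a power of $a_0$. Under the change of variables $q^{\ell}\leftrightarrow a^{\ell}$ (with $\log q=\log a$ paired with the divisor classes), this identifies $\iota_*I$ with $e\cup\mathcal{B}_\delta$, or with $\mathcal{B}^{\mathrm{raw}}_\delta$ up to the invertible class $\sin(\pi e)/(\pi e)$. On the twisted sectors, the fractional parts of $\ell$ coming from the extended stacky data should match the sectorwise components of $\mathcal{B}_c$. Here the normalization map has to absorb the sector-dependent Gamma factors $\Gamma(\langle b\rangle)$, together with the identification of a sector with an interior point $c$.

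Second, I would pair against $\mathrm{H}^\bullet_{\mathrm{CR}}$ and invoke Theorem \ref{thm:main-1-intro}. For each $\alpha$, the component $\langle\alpha\cup e\cup\mathcal{B}_c\rangle_{\mathrm{CR}}$ equals $J_c(\gamma_\alpha)=\int_{\gamma_\alpha}\omega_c$ for some $\gamma_\alpha\in W$, and $\alpha\mapsto\gamma_\alpha$ surjects onto $W$. Elements of $W$ are tubes over cycles in $E_a$ coming from $\partial$, so these integrals are residue periods over compact cycles of $Y$, that is, compact periods. Assembling the components over $c$ and over a basis of $\mathrm{H}^\bullet_{\mathrm{CR}}$ gives the claimed generating series. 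The kernel of $\alpha\mapsto e\cup\alpha$ corresponds exactly to the non-compact directions that are discarded.

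The main obstacle I expect is the twisted-sector bookkeeping in the first step. The extended $I$-function uses the extended stacky fan, which includes the non-ray interior points. Matching its summation lattice and its fractional shifts with the indexed family $\{\mathcal{B}_c\}$ requires checking that the sectorwise normalization is invertible and compatible with the Chen--Ruan pairing. I would first verify this on the untwisted sector, where it is the classical HLY computation. I would then treat each twisted sector $\mathbf{1}_b$ by shifting $\ell$ by the box element $b$ and comparing the resulting Gamma factors term by term.
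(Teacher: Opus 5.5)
\textbf{Overall.} Your route is the paper's: match the extended $I$-function termwise against the indexed $\mathcal{B}$-family using a sectorwise Gamma normalization, then read off periods. The paper compares directly with $\mathcal{B}^{\mathrm{raw}}_c$, using $\mathsf{C}|_b=\Gamma(1-e)\prod_i\Gamma(1-\alpha_i(b)+D_i)$. There the finite product $\Gamma(1-e)/\Gamma(1+\ell_0-e)=(-1)^{1+\langle d,e\rangle}\,e\prod_{r=1}^{\langle d,e\rangle}(e+r)$ produces the pushforward factor $e$ directly. Your comparison with $e\cup\mathcal{B}_c$ is equivalent via \eqref{eq:orbifold-ref}.

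\textbf{The dictionary for the extended data.} This is the step you flag as the obstacle, and your proposed dictionary is not right. The coefficient of $x^k$ is not a sum over $L$. For $\lambda=(d,k)\in\Lambda E^{S_{\mathrm{ext}}}_b$, put $\ell_i=\lambda_i$ for $i\ge1$ and $\ell_0=-1-\langle d,e\rangle$. Then $A\ell=-c_k$ with $c_k=\delta+\sum_j k_j\,(b_j,\operatorname{age}(b_j))$. The map $\lambda\mapsto\ell$ is a bijection onto the contributing part of $L_{c_k,b}$; effectivity of $d$ comes from the fan support condition. So the extended degree $k$, not the sector, selects the interior index. The sector is read off from the fractional parts of $\ell$, and a single $\mathcal{B}_{c_k}$ has components in several sectors. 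In Appendix D, for instance, the $x^0$ coefficient already has a nonzero $\mathbf{1}_b$-component.

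\textbf{Two smaller corrections.} First, the monomial prefactor must be $a_0M_k(a)$ with $M_k(a)=\prod_{i\ge1}a_i^{\sum_j k_j\alpha_i(b_j)}$, not just a power of $a_0$. Only then does $a^{\ell+D}$ become $a^De^{dt}$ under $t_i=\log a_i-\log a_0$. Second, since $D_0=-e$, you have $\Gamma(1-\ell_0+e)=\Gamma(1-\ell_0-D_0)$ exactly, so no reflection formula is needed at this step. The sign $(-1)^{\ell_0}$ depends on $\ell$, so it cannot sit in an $\ell$-independent class. It belongs in the change of variables: the paper's $\mathsf{S}(Q^d)=(-1)^{\langle d,e\rangle}Q^d$, or HLY's $\pi\mathrm{i}\,\ell_0$ in $\log q$. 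The class $\sin(\pi e)/(\pi e)$ enters only when passing from $\mathcal{B}_c$ to $\mathcal{B}^{\mathrm{raw}}_c$.

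\textbf{The period step.} Theorem \ref{thm:main-1-intro} is only an equality of spans for each fixed $c$. It yields, for each $\alpha$ and each $c$, some cycle that may depend on $c$. It does not give one $\gamma_\alpha$ serving all $c_k$ at once, and that is what makes the assembled series a generating series of periods. For this you need Proposition \ref{prop:raw-period-identification}, applied to the raw family. The raw family satisfies \eqref{eq:rawindexed} for all $c\in C\cap\widetilde M$, boundary indices included, whereas the regularized $\mathcal{B}_c$ is defined only for interior $c$. For each constant $\lambda$, the proposition gives one flat $\Gamma_\lambda(a)$, hence one compact cycle $\beta_\lambda$ on $E_a$. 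This single cycle satisfies $\lambda(\mathcal{B}^{\mathrm{raw}}_c)=\int_{\beta_\lambda}\operatorname{Res}_{E_a}\omega_c$ for every interior $c$ simultaneously. Proposition \ref{prop:rawcompactequality} then shows that every compact period arises this way. Finally, arbitrary constant functionals are allowed, so you do not need $\mathsf{C}$ to be compatible with the Chen--Ruan pairing; invertibility suffices.
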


Finally, as another application, we also use \(\mathcal{K}^\bullet\) to
identify the residue of the Gauss--Manin connection with
divisor cup product on the $A$-side (cf.~Theorem \ref{thm:monodromy-residue-and-cup-prod}).

The interpolation complex bridges
the \(A\)-side and the \(B\)-side of 
Batyrev's mirror pairs of toric
CY hypersurfaces. The next is to determine
whether it identifies integral structures on 
both sides appropriately.
As explained by Hosono
\cites{2000-Hosono-local-mirror-symmetry-and-type-IIA-monodromy-of-calabi-yau-manifolds,2006-Hosono-central-charges-symplectic-forms-and-hypergeometric-series-in-local-mirror-symmetry}, 
the integral structure on the \(A\)-side needs a correction;
the \(K\)-theoretic integral structure is expected
(see \cite{2011-Iritani-quantum-cohomology-and-periods} for further developments).
We defer the discussion on the
compatibility of the interpolation construction with 
integral structures to future work.

\begin{remark}
It is expected that the proof can be generalized to 
toric Calabi--Yau complete intersections.
\end{remark}

\subsection{Mirror symmetry and relation to the \(A\)-side}
Mirror symmetry
predicts an identification between \(A\) model
and \(B\) model for mirror pairs.
Here the \(A\) model is taken to be the genus-zero Gromov--Witten (GW)
invariants, while the \(B\) model
is the VHS.

GW theory has been constructed and intensively studied in the past thirty years.
In the genus-zero case, the quantum Lefschetz hyperplane theorem (QLHT)
\cites{2007-Coates-Givental-quantum-riemann-roch-lefschetz-and-serre} relates 
the GW invariants of the ambient space and the GW invariants of hypersurfaces; 
its orbifold extension was further established in
\cites{2010-Tseng-orbifold-quantum-riemann-roch-lefschetz-and-serre}.
In our setup, since \(\Delta\) and \(\nabla=\Delta^{\vee}\) are
reflexive, \(\mathbf{P}_{\Delta}\)
and \(\mathbf{P}_{\nabla}\) as well as
their MPCP desingularizations \(Z\) and \(\check{Z}\) are Gorenstein. In particular, 
the canonical bundles \(K_{\mathcal{Z}}\)
and \(K_{\check{\mathcal{Z}}}\) are pullbacks of bundles on the coarse moduli
\(Z\) and \(\check Z\). Also, \(-K_{\check{\mathcal{Z}}}\) is numerically effective.
The QLHT for orbifolds then applies. Owing to the use of QLHT, the (orbifold) GW invariants
for CY hypersurfaces calculated above only include those with \emph{ambient insertions}.
Whenever the cohomology ring of the hypersurface is generated by restrictions of
ambient divisor classes and the hypersurface is smooth, one obtains
the full information of GW invariants by reconstruction
\cite{2004-Lee-Pandharipande-a-reconstruction-theorem-in-quantum-cohomology-and-quantum-k-theory}.

From this perspective, the hyperplane conjecture is the \(B\)-side counterpart of 
quantum Lefschetz: it identifies the period subspace with the coefficient space after 
anticanonical cup product. 
The above observation suggests that we should be able to recover 
period integrals corresponding to \emph{vanishing cohomology} 
of the hypersurface.
Our proof of Conjecture 
\ref{conj:HLY}, together with the mirror theorem for toric stacks
\cite{2015-Coates-Corti-Iritani-Tseng-a-mirror-theorem-for-toric-stacks} and quantum Lefschetz
\cite{2019-Coates-Corti-Iritani-Tseng-some-applications-of-the-mirror-theorem-for-toric-stacks}, 
establishes the corresponding identification of the genus-zero orbifold GW
invariants with hypersurface periods corresponding to vanishing cohomology for 
Batyrev's mirror pairs of Calabi--Yau hypersurfaces.

\subsection{Relation with existing works}
Our approach builds on the work of Batyrev
\cite{1993-Batyrev-variations-of-the-mixed-hodge-structure-of-affine-hypersurfaces-in-algebraic-tori} and Stienstra 
\cite{1998-Stienstra-resonant-hypergeometric-systems-and-mirror-symmetry}
on the relation between hypergeometric systems and the cohomology 
of toric hypersurfaces. Stienstra constructed a relative cohomology 
\(\mathscr{D}\)-module and, under suitable unimodularity hypotheses, described 
its trivialization by cohomology-valued \(\Gamma\)-series and related the relevant 
weight submodule to a principal ideal in the toric cohomology ring. 

From the perspective of quantum cohomology, Iritani
\cite{2011-Iritani-quantum-cohomology-and-periods} established comparisons 
between hypergeometric solutions and mirror periods (ambient/residual VHS comparison)
as well as their integral structures using quantum Lefschetz and 
the \(K\)-theoretic integral structure defined by the \(\widehat{\Gamma}\)-class
under the lattice generation assumption. 

There is also a point of view from homological mirror symmetry (HMS). In a recent preprint
in \cite{Ganatra:2025aa}, under appropriate hypotheses, Ganatra and Sheridan established 
the implication ``HMS~\(\Rightarrow\) Hodge-theoretic mirror symmetry,''
giving an identification between the genus-zero GW invariants and period integrals.
For a large class of Batyrev's mirror pairs, the required HMS theorem was proven in \cite{Ganatra:2024aa}.
This gives a categorical route to closed-string mirror symmetry in the cases covered by their results.

In this paper, we give a direct cohomological approach as explained above
to the hyperplane conjecture, which removes
the lattice-generation hypothesis through completeness of the full indexed family
and covers the orbifold setting. 

\subsection{Acknowledgments}
The author thanks Professor C.-L.~Wang for his insightful comments and 
valuable suggestions on an earlier version of this manuscript, and expresses 
sincere gratitude to Professors S.~Hosono, B.~Lian, H.-W.~Lin, and C.-L.~Wang 
for their continued support and interest in this work.

The author also thanks Si-Yang Liu for bringing to his attention 
a recent preprint by Ganatra and Sheridan that is closely related to the
topics discussed in this work.

The author acknowledges support from the National Science and Technology Council (NSTC), Taiwan, under Grant No.~115-2115-M-006-013-MY3.

\section{Notations and assumptions}
\label{sec:preliminaries}
Let \(N=\mathbf{Z}^{n}\) be a rank \(n\) lattice and 
\(M:=\mathrm{Hom}_{\mathbf{Z}}(N,\mathbf{Z})\) 
be its dual. Let \(\Delta\subset M_{\mathbf{R}}:=M\otimes_{\mathbf{Z}}\mathbf{R}\)
be a reflexive polytope which determines a toric variety \(\mathbf{P}_{\Delta}\)
with maximal torus
\(T=\operatorname{Spec}\mathbf{C}[M]\), where \(M\cong\mathbf{Z}^n\) is identified with 
the character lattice.
Often time we will fix a MPCP desingularization \(Z\to\mathbf{P}_{\Delta}\).
Let \(\{m_{0}=\mathbf{0},m_{1},\ldots,m_{p}\}\subset\Delta\cap M\)
be a subset of integral points in \(\Delta\).
We consider a Laurent polynomial
\begin{equation}
 f_a(x)=\sum_{i=0}^{p}a_i x^{m_i},~\mbox{and}~
 Y_a=\{f_a=0\}\subset T.
\end{equation}
Assume also that \(f_a\) is nondegenerate, i.e., for each nonempty
face \(\tau\subseteq\Delta\), including \(\Delta\) itself, the face
polynomial \(f_{a,\tau}\) and all its logarithmic derivatives have no
common zero in \(T\). This hypothesis implies the smoothness of \(Y_a\).
Choose a coherent star triangulation of $\Delta$ with vertex set
$\{\mathbf{0},m_1,\ldots,m_p\}$, where
$m_1,\ldots,m_p\in\partial\Delta\cap M$.
Here a star triangulation means a triangulation in which
$\mathbf{0}$ is a vertex of every maximal simplex.
Let $\check\Sigma$ be the simplicial fan obtained by taking
cones over the simplices of this triangulation in
\begin{equation}
  C=\mathrm{Cone}(\Delta\times\{1\}).
\end{equation}
Its primitive ray generators are
\begin{equation}
    v_0=(\mathbf{0},1),~\mbox{and}~v_i=(m_i,1)~\mbox{with}~1\le i\le p.
\end{equation}
Coherence means that there is a rational, positively homogeneous,
convex piecewise-linear function on \(C\) whose maximal domains
of linearity are exactly the maximal cones of \(\check\Sigma\).
Explicitly, \(\check\Sigma\) is the homogenized star fan obtained 
by lifting a star triangulation of \(\Delta\). 

The projection of \(\check\Sigma\) to \(M_{\mathbf{R}}\)
thus gives rise to a complete fan; each maximal cone
is a positive span of vertices in a maximal simplex. 
The projected fan defines a toric variety \(\check Z\); it 
is projective, simplicial and Gorenstein, and the subdivision is crepant.
Note that in our convention, the toric variety associated
with the fan \(\check\Sigma\) is the total space of \(K_{\check Z}\).

If every boundary lattice point is used, the subdivision gives a
maximal projective crepant partial (MPCP) desingularization
model. Note that the maximality of the subset \(\{m_{0},\ldots,m_{p}\}\) 
is not used in the proof. In other words,
we do not assume that \(\check{Z}\) is a MPCP desingularization of
\(\mathbf{P}_{\Delta^{\vee}}\) in the proof.
What we 
require in this manuscript is the following.
\begin{assumption}
The monomials \(x^{m_{i}}\)
with \(i\ne 0\) occurring in \(f_a\)
are exactly the ray monomials in the simplicail fan \(\check{\Sigma}\), 
including the ray generated by \((\mathbf{0},1)\)
with nonzero coefficients, and \(f_{a}\) is nondegenerate.
\end{assumption}
Thus a nonmaximal coherent star triangulation is allowed,
with the corresponding support of \(f_{a}\).
We emphasize that we do not assume that this support generates the ambient
lattice (see Remark~\ref{rmk:ex-generate}).
Let \(\check{\mathcal{Z}}\) be the smooth toric
Deligne--Mumford stack associated with the projected fan
and its primitive ray generators. Its coarse space is
\(\check Z\), and the coarse moduli map induces an isomorphism
on ordinary cohomology with \(\mathbf{C}\) coefficients.
Its Chen--Ruan cohomology also includes the cohomology
of the twisted sectors, shifted in cohomological degree
by twice their ages. These ages are integral because
\(\check{Z}\) is Gorenstein. Since the sectors have ordinary
cohomology concentrated in even degrees, the Chen--Ruan
cohomology is concentrated in even degrees.

To construct a complex to connect
spaces in \eqref{eq:iso-vs-need}, 
we introduce a homogenizing variable \(t\), and put
\begin{equation}
 \widetilde N=N\oplus\mathbf{Z},~
 \widetilde M=\operatorname{Hom}_{\mathbf{Z}}(\widetilde N,\mathbf{Z}),~\mbox{and}~
 v_i=(m_i,1).
\end{equation}
The cone \(C\) over \(\Delta\) and its semigroup algebra \(S\) are given by
\begin{equation}
 C=\operatorname{Cone}(\Delta\times\{1\})\subset\widetilde M_{\mathbf{R}},
 ~\mbox{and}~S=\mathbf{C}[C\cap\widetilde M].
\end{equation}
For \(c=(m,k)\in C\cap\widetilde M\), write \([c]\) 
for the basis monomial
\(x^m t^k\). Thus
\begin{equation}
 [c][c']=[c+c'],~\mbox{and}~\deg[c]=k.
\end{equation}
Particularly, we have \([0]=1\), whereas \([(0,1)]=t\).

Given a non-degenerate Laurent polynomial \(f_{a}\), we associate a homogeneous function
\begin{equation}
 F_a=t f_a=\sum_i a_i[v_i]\in S.
\end{equation}
For \(\mu\in\widetilde N\), the pairing \(\langle\mu,c\rangle\) is the evaluation of
the integral linear functional \(\mu\) at \(c\). The logarithmic derivation
associated with \(\mu\) is
\begin{equation}
 \theta_\mu[c]:=\langle\mu,c\rangle[c].
\end{equation}
This is a derivation of \(S\). In coordinates, if \(\mu_1,\ldots,\mu_n,\mu_t\) is the
coordinate dual basis, then we have 
\begin{equation}
 \theta_{\mu_j}=x_j\frac{\partial}{\partial x_j},~\mbox{and}~
 \theta_{\mu_t}=t\frac{\partial}{\partial t}.
\end{equation}

\begin{remark}
\label{rmk:ex-generate}
To clarify our assumptions, consider the Laurent polynomial
on \(T:=(\mathbf{C}^{\ast})^{2}\)
\begin{equation}
  f_{a} = a_{1}x^{-1}y^{2}+a_{2}x^{2}y^{-1}+a_{3}x^{-1}y^{-1}+a_{0}
\end{equation}
with \(\Delta=\mathrm{Conv}\{(2,-1),(-1,2),(-1,-1)\}\).
For a generic choice of the coefficients \(a\), the Laurent polynomial \(f_{a}\) is nondegenerate
and it defines a smooth affine hypersurface \(Y_{a}\subset T\) as
well as a smooth elliptic curve \(E_{a}=\overline{Y}_{a}\) in \(Z=\mathbf{P}_{\Delta}=\mathbf{P}^{2}\).
One can check that 
\begin{equation}
  \dim_{\mathbf{C}}\mathrm{H}^{2}(U_{a},U_{a}\cap D;\mathbf{C})=\dim_{\mathbf{C}}\mathrm{H}^{2}(T,Y_{a};\mathbf{C})=9
\end{equation}
and this is equal to \(\dim_{\mathbf{C}}\mathrm{H}^{\bullet}_{\mathrm{CR}}(\check{\mathcal{Z}};\mathbf{C})\).

Nevertheless, the GKZ system for the family of hypersurfaces \(f_{a}\) is given by
\begin{equation}
  A=\begin{bmatrix}
  1 & 1 & 1 & 1\\
  0 & 2 & -1 & -1\\
  0 & -1 & 2 & -1
  \end{bmatrix},~\mbox{and}~
  \beta=
  \begin{bmatrix}
    -1\\
    0\\
    0
  \end{bmatrix}
\end{equation}
whose holonomic rank is \(3\).
The inconsistency comes from the fact that the columns of 
\(A\) generate a sublattice of index \(3\) in \(\mathbf{Z}^{2}\).
\end{remark}

\section{Twisted operators and their cohomology}
\label{sec:relative_cohomology_and_the_associated_graded_space}

\begin{definition}
For \(\mu\in\widetilde N\), define a linear operator on \(S\) by
\begin{equation}\label{eq:operator}
 D_{\mu,a}[c]
 =\langle\mu,c\rangle[c]
  +\sum_i a_i\langle\mu,v_i\rangle[v_i+c].
\end{equation}
Equivalently,
\begin{equation}
 D_{\mu,a}=\theta_\mu+M_{\theta_\mu F_a},
\end{equation}
where \(M_g\) denotes multiplication by \(g\) and \(\theta_{\mu}\)
denotes the logarithmic derivative.
\end{definition}
In coordinates these operators are
\begin{equation}
 D_{j,a}=D_{\mu_{j},a}=x_j\frac{\partial}{\partial x_j}
       +M_{t x_j\partial f_a/\partial x_j},\quad
 D_{t,a}=t\frac{\partial}{\partial t}+M_{t f_a}.
\end{equation}

\begin{proposition}
The operators \(D_{\mu,a}\) commute.
\end{proposition}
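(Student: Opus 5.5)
The plan is to use the description \(D_{\mu,a}=\theta_\mu+M_{\theta_\mu F_a}\) and expand the commutator of two such operators. For \(\mu,\nu\in\widetilde N\) bilinearity gives
\begin{equation}
 [D_{\mu,a},D_{\nu,a}]
 =[\theta_\mu,\theta_\nu]
 +[\theta_\mu,M_{\theta_\nu F_a}]
 -[\theta_\nu,M_{\theta_\mu F_a}]
 +[M_{\theta_\mu F_a},M_{\theta_\nu F_a}],
\end{equation}
and I will show that each piece vanishes or that the middle two cancel.

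Three elementary facts about \(S\) are needed. First, the logarithmic derivations commute: on a basis monomial, \(\theta_\mu\theta_\nu[c]=\langle\mu,c\rangle\langle\nu,c\rangle[c]=\theta_\nu\theta_\mu[c]\). Second, \(S\) is commutative, so multiplication operators commute with each other and the last term is zero. Third, since \(\theta_\mu\) is a derivation, \([\theta_\mu,M_g]=M_{\theta_\mu g}\) for every \(g\in S\), which is Leibniz' rule \(\theta_\mu(gh)=(\theta_\mu g)h+g\,\theta_\mu h\).

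With these, the commutator reduces to \(M_{\theta_\mu\theta_\nu F_a}-M_{\theta_\nu\theta_\mu F_a}\), and this vanishes by the first fact applied to \(F_a\). As a check, one can compute directly from the definition \eqref{eq:operator}: both \(D_{\mu,a}D_{\nu,a}[c]\) and \(D_{\nu,a}D_{\mu,a}[c]\) equal
\begin{multline*}
 \langle\mu,c\rangle\langle\nu,c\rangle[c]
 +\sum_i a_i\bigl(\langle\nu,v_i\rangle\langle\mu,v_i+c\rangle+\langle\mu,v_i\rangle\langle\nu,c\rangle\bigr)[v_i+c]\\
 +\sum_{i,j}a_ia_j\langle\mu,v_i\rangle\langle\nu,v_j\rangle[v_i+v_j+c].
\end{multline*}
The linear coefficient \(\langle\mu,v_i\rangle\langle\nu,v_i\rangle+\langle\nu,v_i\rangle\langle\mu,c\rangle+\langle\mu,v_i\rangle\langle\nu,c\rangle\) is symmetric in \(\mu,\nu\). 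The quadratic sum is symmetric after swapping \(i\) and \(j\).

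There is no real obstacle here. The only point needing care is that \(v_i+c\in C\) for \(c\in C\), so that \(D_{\mu,a}\) is well defined on \(S\); this holds because \(C\) is a cone containing every \(v_i\).
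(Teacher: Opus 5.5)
Your proposal is correct and follows the paper's argument. The paper likewise uses \([\theta_\mu,M_g]=M_{\theta_\mu g}\), together with the commutativity of the \(\theta_\mu\) and of multiplication in \(S\), to reduce the commutator to \(M_{\theta_\mu\theta_\nu F_a-\theta_\nu\theta_\mu F_a}=0\). Your direct monomial computation is a valid additional check of the same fact.
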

\begin{proof}
Since \([\theta_\mu,M_g]=M_{\theta_\mu g}\), we obtain
\begin{equation}
 [D_{\mu,a},D_{\nu,a}]
 =M_{\theta_\mu\theta_\nu F_a-
       \theta_\nu\theta_\mu F_a}=0.
\end{equation}
\end{proof}

Now we come to the construction of the complex.
Choose dual bases \(\mu_1,\ldots,\mu_{n+1}\) of \(\widetilde{N}\) and
\(\epsilon_1,\ldots,\epsilon_{n+1}\) of \(\widetilde{M}\). Set
\begin{equation}
 \mathcal{L}^p=S\otimes_\mathbf{C}\bigwedge^p\widetilde{M}_{\mathbf{C}}
\end{equation}
and define
\begin{equation}
 \mathrm{d}_a(g\otimes\eta)=
 \sum_{j=1}^{n+1}D_{\mu_j,a}(g)\otimes(\epsilon_j\wedge\eta).
\end{equation}
The differential depends on \(a\).
Nevertheless, the definition is independent of the chosen dual bases and \(\mathrm{d}_a^2=0\).
After identifying elements in \(\widetilde{M}\) with logarithmic one-forms, we see that
\begin{equation}
 \mathrm{d}_a=\mathrm{d}_{\log}+\mathrm{d}_{\log}F_a\wedge.
\end{equation}
Here \(\mathrm{d}_{\log}[c]=[c]\otimes c\). We denote the complex
\((\mathcal{L}^{\bullet},\mathrm{d}_{a})\) simply by \(\mathcal{L}_{a}^{\bullet}\).

Choose a generator of \(\bigwedge^{n+1}\widetilde{M}\). Then the top cohomology is
\begin{equation}\label{eq:Q}
 Q_a:=\mathrm{H}^{n+1}(\mathcal{L}_a^\bullet)
 \cong S\Big/\sum_{j=1}^{n+1}D_{\mu_{j},a}S.
\end{equation}

Under the nondegeneracy hypotheses, 
we shall prove that the complex \(\mathcal{L}_{a}^{\bullet}\)
computes relative cohomology of the pair \((T,Y_{a})\), where
\(Y_{a}\subset T\) is the zero locus of \(f_{a}\). In other words,
there is an isomorphism
\begin{equation}\label{eq:relative}
 Q_a\cong \mathrm{H}^n(T,Y_a;\mathbf{C}).
\end{equation}

Its period description makes the normalization explicit. Fix
the logarithmic top form
\begin{equation}
\Omega=\frac{\mathrm{d}x_1}{x_{1}}\wedge\cdots\wedge \frac{\mathrm{d}x_n}{x_{n}},
\end{equation}
and let
\(\gamma_a\in \mathrm{H}_n(T,Y_a;\mathbf{C})\) 
be a collection of cycles that vary flatly. 
The generator \([0]\) is paired
with \(\gamma_a\) by
\begin{equation}
 \Pi_0(a)=-\int_{\gamma_a}\Omega.
\end{equation}
For \(c=(m,k)\) with \(k>0\), the pairing is
\begin{equation}\label{eq:period}
 \Pi_c(a)=(-1)^{k-1}\Gamma(k)
 \int_{\partial\gamma_a}
 \operatorname{Res}\left(\frac{x^m\Omega}{f_a^k}\right).
\end{equation}
The residue is understood as the cohomological residue
map. These classes enter relative cohomology through the connecting homomorphism
\(\mathrm{H}^{n-1}(Y_a)\to \mathrm{H}^n(T,Y_a)\).

\begin{proposition}
\label{prop:relativederham}
For a nondegenerate Laurent polynomial \(f_{a}\) with full-dimensional Newton
polytope \(\Delta\), the monomial classes in \(Q_a\) define an
isomorphism \(\Theta_a\) in \eqref{eq:relative}. With the normalization constant in 
\eqref{eq:period}, it sends \([0]\) to the relative class
\([(-\Omega,0)]\), and, for \(c=(m,k)\) with \(k>0\), it sends
\([c]\) to the class
\begin{equation}\label{eq:relativeclass}
 \delta_{Y_{a}}\operatorname{Res}_{Y_a}
 \left[(-1)^{k-1}\Gamma(k)\frac{x^m\Omega}{f_a^k}\right].
\end{equation}
Here \(\delta_{Y_{a}}\colon\mathrm{H}^{n-1}(Y_a;\mathbf{C})\to\mathrm{H}^n(T,Y_a;\mathbf{C})\)
is normalized so that its pairing with a relative cycle is the
pairing of the original class with the boundary cycle. 
\end{proposition}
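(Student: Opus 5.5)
We identify \(\mathcal{L}_a^\bullet\) with an algebraic twisted de Rham complex computing relative cohomology, and then check the normalization on monomials by comparing periods.

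\emph{Step 1: the twisted de Rham complex.} Consider \(\widetilde T=T\times\mathbf{G}_m=\operatorname{Spec}\mathbf{C}[\widetilde M]\) with coordinate \(t\), and the algebraic twisted de Rham complex \((\Omega^\bullet_{\widetilde T},\mathrm{d}+\mathrm{d}F_a\wedge)\), written in the logarithmic basis \(\bigwedge^\bullet\widetilde M_{\mathbf{C}}\). By construction, \(\mathcal{L}_a^\bullet\) is the subcomplex spanned by monomials \([c]\) with \(c\in C\). Two facts then give \(\mathrm{H}^{n+1}\cong\mathrm{H}^n(T,Y_a)\):
\begin{itemize}
\item[(i)] \textbf{Cone reduction.} The inclusion of \(\mathcal{L}_a^\bullet\) into the complex of \(\mathbf{C}[x^{\pm},t]\)-forms is a quasi-isomorphism. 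This is Batyrev's argument: filter by the \(t\)-degree, or equivalently by the Newton filtration. Nondegeneracy makes \(\theta_{\mu_j}F_a\) a regular sequence in the graded pieces \(\operatorname{gr}S\), so every class is represented on \(C\).
\item[(ii)] \textbf{Cayley trick.} Integrating out \(t\) identifies \(\mathrm{H}^{n+1}(\mathbf{C}[x^{\pm},t]\text{-forms},\mathrm{d}+\mathrm{d}(tf_a))\) with \(\mathrm{H}^n(T,Y_a)\). Concretely, the twisted complex on \(T\times\mathbf{A}^1_t\) is quasi-isomorphic to the relative de Rham complex \(\operatorname{Cone}(\Omega^\bullet_T\to\Omega^\bullet_{Y_a})[-1]\), shifted by one. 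The quickest way to see this is the Dwork-type exact sequence: multiplication by \(t^k\) corresponds to \(f_a^{-k}\) up to factorials. This yields \(\mathrm{H}^{n+1}\cong\mathrm{H}^n(T\setminus Y_a)/\mathrm{H}^n(T)\)-type pieces, glued with \(\mathrm{H}^n(T)\) coming from the degree-\(0\) part.
\end{itemize}

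\emph{Step 2: normalization via periods.} To fix the explicit map \(\Theta_a\), pair with relative cycles \(\gamma\) using the exponential period
\begin{equation}
[c]\mapsto \int_{\gamma\times\mathbf{R}_{\ge0}} x^m t^{k}e^{tf_a}\,\Omega\wedge\frac{\mathrm{d}t}{t},
\end{equation}
defined on a cycle where \(\operatorname{Re}(tf_a)\to-\infty\) appropriately. This functional is well defined on the quotient \(Q_a\), because \(D_{\mu_j,a}g\,e^{F_a}=\theta_{\mu_j}(g e^{F_a})\) is exact: Stokes' theorem kills the image, with boundary terms vanishing since \(t^k\) vanishes at \(t=0\) when \(k\ge1\). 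For \(k\ge1\), computing the \(t\)-integral gives \(\Gamma(k)(-f_a)^{-k}=(-1)^k\Gamma(k)f_a^{-k}\). The leftover \(\int_\gamma x^m f_a^{-k}\Omega\) over a cycle in \(T\setminus Y_a\), here a tube over \(\partial\gamma\), then becomes \(\int_{\partial\gamma}\operatorname{Res}\). Matching the sign and \(2\pi i\) conventions against the normalization of \(\delta_{Y_a}\) recovers exactly \eqref{eq:period}. For \(c=0\) the \(t\)-integral degenerates: the term \(\int\mathrm{d}t/t\) must be regularized as a boundary contribution at \(t=0\), which yields \(-\int_\gamma\Omega\), the class \([(-\Omega,0)]\).

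\emph{Step 3: conclusion.} The map \(\Theta_a\) is well defined by Step 2, compatible with the abstract isomorphism of Step 1, and hence an isomorphism. Alternatively, one checks surjectivity (residues of forms with poles span by Griffiths–Batyrev reduction of pole order) and compares dimensions, both sides being \(n!\operatorname{vol}(\Delta)\) plus the \(\mathrm{H}^n(T)\) contribution. The main obstacle is (i), the cone reduction together with the careful treatment of the \(k=0\) stratum, i.e.\ showing the degree-zero part contributes exactly \(\mathrm{H}^n(T)\). This needs the nondegeneracy on all faces, including \(\Delta\) itself. For it I would follow Batyrev's spectral sequence argument for the Koszul complex on \(\operatorname{gr}S\).
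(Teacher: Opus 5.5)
As written, the proposal has two genuine gaps. The overall architecture can still be repaired into a valid proof that differs from the paper's.

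\textbf{Step~2 fails, and Step~3 depends on it.} You rely on Step~2 for two things: that $\Theta_a$ is well defined on $Q_a$, and that it agrees with the abstract isomorphism of Step~1. But the pairing over $\gamma\times\mathbf{R}_{\ge0}$ is not defined.
\begin{itemize}
\item Since $\partial\gamma\subset Y_a$ and $f_a=0$ there, $e^{tf_a}$ does not decay over $\partial\gamma$.
\item After integrating out $t$ you are left with $\int_\gamma x^mf_a^{-k}\Omega$, which diverges along $\partial\gamma$ for $k\ge1$.
\item For $c=0$, the factor $\mathrm{d}t/t$ diverges at $t=0$.
\end{itemize}
You then use a tube over $\partial\gamma$ for $k\ge1$ and an unspecified regularization for $k=0$. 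That pairs different monomials with different cycles, so neither the Stokes argument nor the compatibility is proved.

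You do not need exponential integrals. Set $L(x^mt^k)=A_kx^m/f_a^k$ with $A_k=(-1)^{k-1}\Gamma(k)$. Then:
\begin{itemize}
\item $L(D_tS_+)=0$, because $A_{k+1}=-kA_k$;
\item $L\circ D_j=\theta_j\circ L$, which produces exact forms $\mathrm{d}(h\,\iota_{\theta_j}\Omega)$;
\item $D_t[0]=tf_a$ and $D_j[0]=t\theta_jf_a$ map to $\delta_{Y_a}\operatorname{Res}_{Y_a}[\Omega]=0$ and $\delta_{Y_a}i^{\ast}[\iota_{\theta_j}\Omega]=0$.
\end{itemize}
Now run (ii) explicitly with the same $L$. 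On $\mathbf{C}[x^{\pm},t]$, keeping $\mathrm{d}t/t$, the operator $D_t$ is injective with cokernel $\mathbf{C}[x^{\pm}]\oplus\mathbf{C}[x^{\pm},f_a^{-1}]/\mathbf{C}[x^{\pm}]$. (With ordinary forms on $T\times\mathbf{A}^1_t$ the height-zero line disappears and you get $\mathrm{H}^{n-1}(Y_a)$ instead.) Then $\Theta_a$ is literally the composite of (i) and (ii).

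What remains is the gluing you only gesture at, and it is the core of the paper's proof:
\begin{itemize}
\item $\operatorname{Res}_{Y_a}$ maps onto $\mathrm{H}^{n-1}(Y_a)$ with kernel $\mathbf{C}[\Omega]$;
\item the residues of $(\theta_jf_a/f_a)\Omega$ span $i^{\ast}\mathrm{H}^{n-1}(T)$;
\item $\mathrm{H}^n(Y_a)=0$ then gives $0\to\operatorname{Coker}(i^{\ast})\to\mathrm{H}^n(T,Y_a)\to\mathrm{H}^n(T)\to0$.
\end{itemize}

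\textbf{The justification of (i) is wrong, though the claim is true.} Regularity of the symbols $G_{\mu_j,a}$ in $\operatorname{gr}S$ computes only $\mathrm{H}^\bullet(\mathcal{L}_a^\bullet)$ itself: concentration in top degree and $\operatorname{gr}Q_a\cong R_a$. It says nothing about the inclusion into the larger complex. There the $t$-symbols $tf_a,\ t\theta_jf_a$ all lie in $(t)$, so the associated graded has infinite-dimensional cohomology, and the Newton filtration is not even defined off $C$.

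What works is a pole-order argument.
\begin{itemize}
\item Impose the facet inequalities of $C$ one at a time, starting from $\{k\ge0\}$.
\item At the step for a facet with inward primitive normal $u$, the quotient complex is spanned by monomials with $\langle u,c\rangle\le-1$. Filter it by $\langle u,c\rangle$.
\item Multiplication by $\theta_uF_a$ strictly raises $\langle u,\cdot\rangle$. So on the graded pieces $D_{u,a}$ acts by the nonzero scalar $\langle u,c\rangle$, and the Koszul complex is acyclic.
\item The filtration is bounded above by $-1$, so the reduction terminates.
\end{itemize}
This uses no nondegeneracy; only smoothness of $Y_a$ enters, in (ii). The height-zero part is also harmless: in $\mathcal{L}_a^\bullet$ it is the single line $\mathbf{C}[0]$, which splits off because $D_{\mu,a}S\subseteq S_+$. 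Face nondegeneracy plays no role there.

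\textbf{Two smaller points.} Both sides have dimension $\operatorname{Vol}_M(\Delta)$ \emph{including} the $\mathrm{H}^n(T)$ line; it is not $\operatorname{Vol}_M(\Delta)$ plus one. Your surjectivity alternative needs every class to be represented with $m\in k\Delta$, i.e.\ with no poles along the toric boundary. That is the paper's toric input: $S_+/D_tS_+\cong\Gamma(X,\mathcal{O}_X(\ast E))$ together with Demazure vanishing on a toric resolution $X$. A Griffiths-type pole reduction does not supply it.

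\textbf{How the repaired route compares.} Your route (cone inside half-space, then Cayley elimination of $t$) uses only algebraic de Rham theory on the affine varieties $T$ and $T\setminus Y_a$, and avoids the toric compactification entirely. The paper instead eliminates $t$ directly on $S_+$. It therefore needs that compactification to know that forms with poles only along $E$ compute $\mathrm{H}^\bullet(T\setminus Y_a)$.
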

\begin{proof}
We drop the subscript \(a\) in the proof and write \(f=f_a\), \(Y=Y_a\), and \(V=T\setminus Y\). All cohomology groups are over \(\mathbf{C}\).
Set
\begin{equation}
 S_+=\bigoplus_{k\ge1}S_k,~\mbox{where}~
 S_k=\operatorname{Span}_{\mathbf{C}}\{x^mt^k\mid m\in k\Delta\cap M\}.
\end{equation}
Note that \(S_{+}\) is preserved by every \(D_{\mu,a}\). Choose a smooth projective
toric resolution \(X\) of \(\mathbf{P}_{\Delta}\),
and let \(D=X\setminus T\) be the toric boundary. 
Denote by \(E\) the zero divisor of the ball back of the section
\(f\) on \(X\). Nondegeneracy condition implies that \(E\) is smooth and
transverse to the toric strata, so \(D+E\) is a simple normal
crossing divisor.

The divisor \(kE\) is nef, and its section spaces are the lattice-point
spaces of \(k\Delta\). Dividing by the section \(f^k\) gives
an identification
\begin{equation}
 P:=\Gamma(X,\mathcal O_X(\ast E))
 =\varinjlim_{k\ge1}
 \left\{\frac{g}{f^k}~\left|~g\in
  \bigoplus_{m\in k\Delta\cap M}\mathbf{C}x^{m}\right.\right\}.
\end{equation}
Define a linear map \(L\colon S_+\to P\) by
\begin{equation}\label{eq:heightelimination}
 L(x^mt^k)=A_k\frac{x^m}{f^k},~\mbox{where}~
 A_k=(-1)^{k-1}\Gamma(k),
\end{equation}
and extend linearly.
The height relation becomes
\begin{equation}
 D_t(x^mt^k)=kx^mt^k+fx^mt^{k+1}.
\end{equation}
Since \(A_{k+1}=-kA_k\), its image under \(L\) is zero. 

Conversely,
after rescaling degree \(k\) by the constant \(A_k\), these are exactly the
direct-limit relations above. Thus it establises an
isomorphism
\begin{equation}\label{eq:heightcokernel}
 S_+/D_tS_+\cong P.
\end{equation}
The operator \(D_t\) is injective on \(S_+\). To see this, take 
\(h\in \operatorname{Ker}(D_{t})\), the action of \(D_{t}\) on the
highest-degree term in \(h\) is multiplication by \(tf\), which a nonzero element
of the domain \(S\). Therefore that term has to
be zero and this proves the injectivity.

For each torus direction \(j\), direct calculation
gives
\begin{equation}\label{eq:heightderivative}
 \begin{aligned}
 L(D_j(x^mt^k))
 &=A_k\left(m_j\frac{x^m}{f^k}
 -k\frac{x^m\theta_{j}f}{f^{k+1}}\right)\\
 &=\theta_j L(x^mt^k).
 \end{aligned}
\end{equation}
Eliminating the one-term height Koszul direction therefore reduces
the cohomological degree by one and replaces the remaining twisted
operators by the ordinary logarithmic derivatives on \(P\).

The complex
\begin{equation}
 \label{eq:de-rham-e}
 \Omega_X^\bullet(\log D)(\ast E)
\end{equation}
computes \(\mathrm{H}^\bullet(V)=
\mathrm{H}^{\bullet}(T\setminus Y)\), by the logarithmic and meromorphic
de Rham comparisons. On a smooth toric variety,
\begin{equation}
 \Omega_X^p(\log D)(kE)
 \cong\mathcal O_X(kE)\otimes\bigwedge^pM_{\mathbf{C}}.
\end{equation}
Demazure vanishing for the nef divisor \(kE_X\) gives acyclicity in
positive sheaf degrees; see \cite[Theorem~9.2.3]{2011-Cox-Little-Schenck-toric-varieties}. 
Passing to
the direct limit preserves this vanishing. Consequently the
hypercohomology of \eqref{eq:de-rham-e}
is computed by global sections, namely
\((P\otimes\bigwedge^\bullet M_{\mathbf{C}},\mathrm{d}_{\log})\).
In particular, if \(\mathcal K_a^\bullet(S_+)\) denotes the twisted
complex with coefficient module \(S_+\), then
\begin{equation}\label{eq:positivecomplement}
 \begin{aligned}
 \mathrm{H}^{n+1}(\mathcal K_a^\bullet(S_+))
 &=S_+\Big/\bigl(D_tS_++\sum_jD_jS_+\bigr)\\
 &\cong\mathrm{H}^n(V),
 \end{aligned}
\end{equation}
and \([c]\) maps to \([A_kx^m\Omega/f^k]\)
under this isomorphism.

The decomposition
\(S=\mathbf{C}\cdot [0]\oplus S_+\) has the property that
\(D_\mu S\subseteq S_+\), so the class of \([0]\) survives as
a direct summand in the quotient \(Q_a\). The only extra relations in the
positive-height quotient are those coming from
\begin{equation}
 D_t(1)=tf,~\mbox{and}~D_j(1)=t\theta_jf.
\end{equation}
Under \eqref{eq:positivecomplement}, these become respectively
\([\Omega]\) and \([(\theta_jf/f)\Omega]\). Hence
\begin{equation}\label{eq:Qcomplementquotient}
 Q_a\cong\mathbf{C}\cdot [0]\oplus
 \left(\mathrm{H}^{n}(V)\slash 
 \left\langle[\Omega],[(\theta_jf/f)\Omega]\mid 1\le j\le n\right\rangle.\right)
\end{equation}
Now the Gysin sequence of the smooth divisor \(Y\subset T\) contains
\begin{equation}
 \mathrm{H}^n(T)\rightarrow\mathrm{H}^n(V)
 \xrightarrow{\operatorname{Res}_Y}\mathrm{H}^{n-1}(Y)
 \rightarrow\mathrm{H}^{n+1}(T)=0.
\end{equation}
Thus the residue map 
\(\operatorname{Res}_{Y}\) is surjective and its kernel is the image of
\(\mathrm{H}^n(T)=\mathbf{C}[\Omega]\).
Let \(\eta_j=\iota_{\theta_j}\Omega\). Then
\begin{equation}
 \frac{\theta_jf}{f}\Omega
 =\frac{\mathrm{d}f}{f}\wedge\eta_j,~\mbox{and}~
 \operatorname{Res}_Y\left[\frac{\theta_jf}{f}\Omega\right]
 =i^{\ast}[\eta_j],
\end{equation}
where \(i\colon Y\hookrightarrow T\) is the inclusion. 
We thus conclude that the classes \([\eta_j]\)
span \(\mathrm{H}^{n-1}(T)\) (including the case \(n=1\)).
Equation~\eqref{eq:Qcomplementquotient} therefore becomes
\begin{equation}\label{eq:Qprimitiveextension}
 Q_a\cong\mathbf{C}[0]\oplus
 \operatorname{Coker}\bigl(\mathrm{H}^{n-1}(T)
 \xrightarrow{i^{\ast}}\mathrm{H}^{n-1}(Y)\bigr).
\end{equation}

The smooth affine variety \(Y\) has complex dimension \(n-1\),
so by the affine vanishing theorem \(\mathrm{H}^n(Y)=0\). 
The relative exact sequence consequently
gives
\begin{equation}\label{eq:relativeextension}
 0\rightarrow\operatorname{Coker}i^{\ast}
 \xrightarrow{\delta}\mathrm{H}^n(T,Y)
 \rightarrow\mathrm{H}^n(T)\rightarrow0.
\end{equation}
The relative de Rham class \([(-\Omega,0)]\) maps to the generator
\(-[\Omega]\), since the restriction of 
\(\Omega\) to \(Y\) is zero. 
To construct a natural isomorphism, we 
define a morphism
\begin{equation}
\begin{aligned}
S=\mathbf{C}\oplus S_{+}&\to \mathrm{H}^{n}(T,Y)~\mbox{via}~\\
([0],[c])&\mapsto 
\left(-\Omega,\operatorname{Res}_{Y}
 \left[(-1)^{k-1}\Gamma(k)\frac{x^m\Omega}{f^k}\right]\right)~\mbox{for}~c=(m,k).
\end{aligned}
\end{equation}
Sending \([0]\) to \(-\Omega\) and
using \(\delta_{Y}\operatorname{Res}_Y\) on the second summand of
\eqref{eq:Qcomplementquotient} identifies
\eqref{eq:Qprimitiveextension} with \eqref{eq:relativeextension};
recall that 
\begin{equation}
  \delta_{Y}[\eta]=[(0,\eta)].
\end{equation}
This proves the asserted isomorphism and the period normalization.
\end{proof}

Filter \(S\) by homogenizing degree induced by \(t\) and give \(Q_a\) the quotient
filtration. 
The filtered complex \(F_{q}\mathcal{L}^{\bullet}_{a}\) is 
\begin{equation}
  F_{q}\mathcal{L}_{a}^{p}:=F_{p+q}S\otimes \bigwedge^{p} \widetilde{M}.
\end{equation}
The leading parts of the operators are multiplication by
\begin{equation}
 G_{\mu,a}=\sum_i a_i\langle\mu,v_i\rangle[v_i]
\end{equation}
and hence the graded complex is nothing but 
the Koszul complex
\begin{equation}
  K^{\bullet}(G_{1},\ldots,G_{\ell};S).
\end{equation}
Define the logarithmic Jacobian ring
\begin{equation}
 R_a:=S/(G_{\mu_1,a},\ldots,G_{\mu_{n},a},G_{\mu_{t},a}).
\end{equation}
The standard nondegenerate comparison gives an
isomorphism
\begin{equation}
 \operatorname{gr} Q_a\cong R_a
\end{equation}
between vector spaces.
\begin{remark}
The cohomology \(\mathrm{H}^{n}(T,Y_{a};\mathbf{C})\)
is connected to the solutions to 
the full GKZ system describing periods
for CY hypersurfaces in toric varieties.
In fact, there is a duality isomorphism
\begin{equation}
\label{eq:coh-hom-sol}
  \mathrm{H}^{n}(T,Y_{a};\mathbf{C})\cong 
  \mathrm{H}_{n}(U_{a},U_{a}\cap D;\mathbf{C})
\end{equation}
where \(U_{a}:=Z\setminus \overline{Y}_{a}\)
is the complement of the closure of the 
affine hypersurface \(Y_{a}\) inside 
the ambient toric manifold \(Z\) and 
\(D:=Z\setminus T\) is the toric boundary.
The right hand side of \eqref{eq:coh-hom-sol}
is shown to be the solution space
\cite{2020-Lee-Zhang-a-hypergeometric-systems-and-relative-cohomology}
when the columns of \(A\) generate the full lattice.
\end{remark}
\section{The toric fan endpoint}
\label{sec:the_toric_fan_endpoint}
Let \(\check{\Sigma}\) be the fan in \(C\) obtained by homogenizing the chosen star triangulation. By a star triangularization we mean that
every maximal cone contains the vector \((0,1)\).
On the monomial space \(S\) we define
another multiplication by
\begin{equation}
 [c]\ast_{\check\Sigma}[c']
 =\begin{cases}
 [c+c'],&c,c'\text{ belong to a common cone of }\check{\Sigma},\\
 0,&\text{otherwise}.
 \end{cases}
\end{equation}
Write \(S_{\check{\Sigma}}\) for this partial semigroup algebra. 
It is clear that the multiplication
is associative. Define for \(\mu\in \tilde{N}\) an operator
\begin{equation}
 D_{\mu,a}^{\check{\Sigma}}(g)
 =\left(\sum_i a_i\langle\mu,v_i\rangle[v_i]\right)\ast_{\check{\Sigma}} g
 =G_{\mu,a}\ast_{\check{\Sigma}} g.
\end{equation}
For clarity, we will index the rays by \(0\) and the boundary rays \(\rho\).
More concretely,
\begin{equation}
 v_0=\delta=(0,1),~\mbox{and}~v_\rho=(\rho,1)~\mbox{for}~\rho\in\Delta\cap M.
\end{equation}
Let \(B_{\check{\Sigma}}\subset S_{\check{\Sigma}}\) be the subalgebra generated by
the ray monomials \(u_0=[\delta]\) and \(u_\rho=[v_\rho]\).
Simpliciality gives
\begin{equation}
 B_{\check{\Sigma}}\cong
 \mathbf{C}[u_0,u_\rho]/I_{\mathrm{SR}}(\check \Sigma).
\end{equation}
Here \(I_{\mathrm{SR}}(\check \Sigma)\) 
is the Stanly--Reisner ideal for \(\check{\Sigma}\);
it is generated by ``non-cones'' in \(\check{\Sigma}\).

The central variable \(u_0\) occurs in no Stanley--Reisner generator.
In addition, we assume the ray coefficients are nonzero and put \(z_i=a_i u_i\).
The linear relations imposed by the \(D_{\mu,a}^{\check{\Sigma}}\) are
\begin{equation}
 z_0+\sum_\rho z_\rho=0,~\mbox{and}~
 \sum_\rho\langle\nu,\rho\rangle z_\rho=0~(\nu\in N).
\end{equation}
Eliminating \(z_0\) in the ray-generated subalgebra gives
an isomorphism
\begin{equation}\label{eq:ordinarytoric}
 B_{\check{\Sigma}}/(G_{\mu,a})
 \cong \mathrm{H}^{\mathrm{even}}(\check Z;\mathbf{C}),~\mbox{via}~
 z_\rho\mapsto D_\rho.
\end{equation}
Now we describe the orbifold data.
For a simplicial cone \(\sigma\in\check{\Sigma}\), define
\begin{equation}\label{eq:box}
 \operatorname{Box}(\sigma)=
 \left\{\sum_{v_i\in\sigma}\lambda_i v_i\in\widetilde M\mid 
 0\le\lambda_i<1\right\}
\end{equation}
and put
\begin{equation}
  \operatorname{Box}(\check{\Sigma})=\bigcup_{\sigma\in\check{\Sigma}}
  \operatorname{Box}(\sigma).
\end{equation}
Every \(c\in\sigma\cap\widetilde M\) admits a unique decomposition
\begin{equation}\label{eq:boxdecomposition}
 c=b+\sum_{v_i\in\sigma}m_i v_i,~\mbox{with}~
 b\in\operatorname{Box}(\sigma)~\mbox{and}~m_i\in\mathbf{Z}_{\ge0}.
\end{equation}
Nonzero box elements supply additional monomials in \(S_\Sigma\).

The coefficients \(a_{i}\) can be absorbed or normalized on the full algebra as well. Choose
\(\ell_i\in\mathbf{C}\) with \(\exp(\ell_i)=a_i\). For the unique expansion
\(c=\sum_i\lambda_i v_i\) in its smallest cone, set
\begin{equation}
 z(c)=\exp\left(\sum_i\lambda_i\ell_i\right)[c].
\end{equation}
These rescalings agree on common faces and are multiplicative within
each cone, hence define an algebra automorphism. In particular, we have
\(z(v_i)=a_i[v_i]\).
With this normalization, we obtain
\begin{equation}\label{eq:toric}
 S_{\check{\Sigma}}/(G_{\mu,a})
 \cong\mathrm{H}_{\mathrm{CR}}^\bullet(\check{\mathcal Z};\mathbf{C})~\mbox{via}~z_\rho\mapsto D_\rho.
\end{equation}
Here \(\check{\mathcal{Z}}\) is the toric
Deligne--Mumford stack associated to the fan obtained
by projecting \(\check{\Sigma}\) to \(M_{\mathbf{R}}\).
Since \(\check{\Sigma}\) is simplicial, the projected
fan defines 
a simplicial toric variety, or a smooth 
toric Deligne--Mumford stack. 
We denote the 
coarse moduli space by \(\check{Z}\). 
This is the presentation of toric orbifold cohomology;
see \cite{2005-Borisov-Chen-Smith-the-orbifold-chow-ring-of-toric-deligne-mumford-stacks}*{Theorem~1.1}
and \cite{2013-Horja-toric-deligne-mumford-stacks-and-the-better-behaved-version-of-the-gkz-hypergeometric-system}*{\S2 and \S4}.
Semigroup degree \(k\) corresponds to Chen--Ruan degree \(2k\).
The quotient in \eqref{eq:ordinarytoric} embeds as the untwisted sector in the
stack \(\check{\mathcal{Z}}\);
the box decomposition supplies the twisted sectors.

To relate the fan \(\check{\Sigma}\) over 
\(\Delta\times\{1\}\) to the projected fan, we put
\begin{equation}
h_\Delta(m)=\min\{r\ge0\mid m\in r\Delta\}.  
\end{equation}
That \(\Delta\) is reflexive implies that
\(h_\Delta(m)\in\mathbf{Z}\) for \(m\in M\), and it is linear on each
cone of \(\check\Sigma\). If \(\mathbf{C}[M]^{\check\Sigma}\) denotes the
partial group algebra of the projected
complete fan, there is an isomorphism
\begin{equation}
 S_{\check{\Sigma}}\cong\mathbf{C}[u_0]\otimes\mathbf{C}[M]^{\check\Sigma}
 ~\mbox{via}~
 [(m,k)]\mapsto u_0^{k-h_\Delta(m)}[m].
\end{equation}
Thus eliminating the central variable reduces the full quotient to
the usual orbifold presentation on \(\check\Sigma\). Geometrically,
\(\check\Sigma\) describes \(\operatorname{Tot}(K_{\check Z})\), with the corresponding
stack \(\operatorname{Tot}(K_{\check{\mathcal Z}})\). Local stabilizers act trivially  
on the fibers because the coarse space is Gorenstein.

\begin{remark}[Ordinary versus orbifold cohomology]
Writing \(\operatorname{mult}(\sigma)\) for the lattice index of a
maximal cone of \(\check\Sigma\), we have
\begin{equation}\label{eq:rankobstruction}
 \begin{aligned}
 \dim_\mathbf{C}\mathrm{H}^n(T,Y_a)
 &=\operatorname{Vol}_M(\Delta)
   =\sum_{\sigma\in\check\Sigma(n)}\operatorname{mult}(\sigma),\\
 \dim_\mathbf{C}\mathrm{H}^{\mathrm{even}}(\check Z)
 &=\#\check\Sigma(n).
 \end{aligned}
\end{equation}
The second equality uses vanishing of odd rational cohomology and the
toric orbit decomposition. Therefore the full relative space can have
the dimension of ordinary toric cohomology only when all maximal
cones are unimodular. The orbifold endpoint has the required full rank.
\end{remark}

\section{An interpolation complex \texorpdfstring{\(\mathcal{K}^{\bullet}\)}{K}}
\label{sec:VHS-complex}
In this section, we will connect
\(\mathrm{H}^{\mathrm{\bullet}}_{\mathrm{CR}}(\check{\mathcal{Z}};\mathbf{C})\)
with the relative cohomology \(\mathrm{H}^{n}(T,Y_{a};\mathbf{C})\)
by constructing a global complex.
Choose an integral, positively homogeneous, convex piecewise-linear
function \(\psi\colon C\to\mathbf{R}\), with linearity cones exactly \(\check\Sigma\).
For \(c,c'\in C\cap\widetilde M\), we put
\begin{equation}
 b(c,c'):=\psi(c)+\psi(c')-\psi(c+c')\in\mathbf{Z}_{\ge0}.
\end{equation}
On \(\mathcal{S}=\bigoplus_{c\in C\cap\widetilde M}\mathbf{C}[s][c]\), we set
\begin{equation}\label{eq:product}
 [c]\ast_s[c']:=s^{b(c,c')}[c+c'].
\end{equation}
The product \(\ast_{s}\) on \(\mathscr{S}\) is associative as it follows from
\begin{equation}
 b(c,c')+b(c+c',c'')
 =b(c',c'')+b(c,c'+c'').
\end{equation}
Thus \(\mathcal{S}\) is a free \(\mathbf{C}[s]\)-algebra connecting two relevant
spaces in question, i.e.~
\(\mathcal{S}/(s-1)\cong S\) and \(\mathcal{S}/(s)\cong S_{\check \Sigma}\).
Now we define
\begin{equation}\label{eq:familyD}
 D_{\mu,a,s}[c]=D_{\mu,s}[c]
 =s\langle\mu,c\rangle[c]
 +\sum_i a_i\langle\mu,v_i\rangle[v_i]\ast_s[c].
\end{equation}
We will omit the subscript \(a\) if it causes no confusion.

The Euler operators are derivations of \(\ast_s\). If \(M_g^{(s)}\)
denotes multiplication by \(g\in\mathscr{S}\) in this algebra, then
\begin{equation}
 D_{\mu,s}=s\theta_\mu+M_{G_{\mu,a}}^{(s)},~\mbox{and}~
 [D_{\mu,s},D_{\nu,s}]
 =sM_{\theta_\mu G_{\nu,a}-\theta_\nu G_{\mu,a}}^{(s)}=0.
\end{equation}
Hence these operators define a \(\mathbf{C}[s]\)-linear Koszul complex
\(\mathcal{K}^\bullet\). Its specialized complexes have top cohomology
\begin{equation}
 \begin{aligned}
 \mathrm{H}^{n+1}(\mathcal{K}^\bullet|_{s=1})
   &\cong \mathrm{H}^n(T,Y_a;\mathbf{C}),\\
 \mathrm{H}^{n+1}(\mathcal{K}^\bullet|_{s=0})
   &\cong \mathrm{H}_{\mathrm{CR}}^\bullet(\check{\mathcal Z};\mathbf{C}).
 \end{aligned}
\end{equation}

\section{Cohomology of \texorpdfstring{\(\mathcal{K}^{\bullet}\)}{K}}
Let \(\ell=n+1\) be the rank of \(\widetilde{N}\). In what follows, we
fix a basis \(\mu_1,\ldots,\mu_\ell\) of \(\widetilde N\)
and write \(G_j=\sum_i a_i\langle\mu_j,v_i\rangle[v_i]\) for brevity. We
assume that the parameter \(s\) carries no weight, and monomials have their height degree.
Following the finite-complex argument in Hartshorne, 
fiberwise exactness of each graded strand below degree \(\ell\) implies that its top 
cohomology is locally free and commutes with base change.

\begin{theorem}\label{thm:comparison}
Under the hypotheses of \S\ref{sec:preliminaries}, there is a \(g\in\mathbf{C}[s]\)
with \(g(0)g(1)\ne0\) such that, for
\begin{equation}
 A=\mathbf{C}[s,g^{-1}],~
 \mathcal{K}_A^\bullet=\mathcal{K}^\bullet\otimes_{\mathbf{C}[s]}A,
 ~\mbox{and}~\mathcal{H}=\mathrm{H}^\ell(\mathcal{K}_A^\bullet),
\end{equation}
the following hold.
\begin{enumerate}
\item \(\mathrm{H}^j(\mathcal{K}_A^\bullet)=0\) for \(j\ne\ell\).
\item \(\mathcal{H}\) is finite free over \(A\), of rank
\(\operatorname{Vol}_M(\Delta)
=\dim_\mathbf{C} \mathrm{H}_{\mathrm{CR}}^\bullet(\check{\mathcal Z};\mathbf{C})\).
Here we use the normalized volume.
\item For every \(b\in\mathbf{C}\) with \(g(b)\ne0\), specialization gives
\begin{equation}\label{eq:basechange}
 \mathcal{H}/(s-b)\mathcal{H}
 \cong \mathrm{H}^\ell(\mathcal{K}^\bullet|_{s=b}).
\end{equation}
\item A basis of \(\mathcal{H}\) adapted to its degree filtration gives
a noncanonical filtered vector-space isomorphism
\begin{equation}\label{eq:endpointcomparison}
 \mathrm{H}^n(T,Y_a;\mathbf{C})\cong \mathrm{H}_{\mathrm{CR}}^\bullet(\check{\mathcal Z};\mathbf{C}),
\end{equation}
where the filtration on the right is
\(F_k=\bigoplus_{j\le k}\mathrm{H}_{\mathrm{CR}}^{2j}(\check{\mathcal Z};\mathbf{C})\).
\end{enumerate}
\end{theorem}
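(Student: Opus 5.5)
The approach is to reduce everything to the associated graded Koszul complex along the height filtration. That complex is independent of \(s\), so it controls the whole family \(\mathcal{K}^\bullet\).

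First I filter \(\mathcal{S}\) by height degree \(k\) and \(\mathcal{K}^p\) by \(F_q\mathcal{K}^p=F_{p+q}\mathcal{S}\otimes\bigwedge^p\widetilde M\), exactly as was done for \(\mathcal{L}^\bullet_a\). The Euler term \(s\theta_\mu\) preserves height. Multiplication by \(G_{\mu,a}\) raises height by one, so the differential is filtered. The leading part of \(D_{\mu,s}\) is \(M^{(s)}_{G_{\mu,a}}\), and the graded complex is the Koszul complex \(K^\bullet(G_1,\dots,G_\ell;\mathcal{S})\) for the \(\ast_s\)-product. Its fibers are
\begin{equation*}
K^\bullet(G;S)\ \text{at}\ s=b\neq0,
\qquad
K^\bullet(G;S_{\check\Sigma})\ \text{at}\ s=0.
\end{equation*}
For \(b\neq0\), the rescaling \([c]\mapsto b^{\psi(c)}[c]\) identifies \((\mathcal{S}/(s-b),\ast_b)\) with \(S\), at the cost of rescaling \(a\).

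The key input is that \(G_1,\dots,G_\ell\) form a regular sequence in both rings.
\begin{itemize}
\item In \(S\), this is Kouchnirenko/Batyrev: \(S\) is Cohen--Macaulay of dimension \(\ell\), and nondegeneracy of \(F_a\) makes the \(G_j\) a homogeneous system of parameters. The quotient \(R_a\) then has dimension \(\operatorname{Vol}_M(\Delta)\).
\item In \(S_{\check\Sigma}\), this is the Stanley--Reisner/Borisov--Chen--Smith statement. \(S_{\check\Sigma}\) is Cohen--Macaulay, being a deformation of a face ring of a simplicial sphere-like ball with box sectors, and the linear forms \(G_j\) form a linear system of parameters because the \(a_i\) are nonzero and the \(v_i\) span. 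The quotient is \eqref{eq:toric}, of dimension \(\sum_\sigma\operatorname{mult}(\sigma)=\operatorname{Vol}_M(\Delta)\), as in \eqref{eq:rankobstruction}.
\end{itemize}
Both fibers are height-graded and each height strand is finite dimensional. Hence in each strand the graded Koszul complex over \(\mathbf{C}[s]\) is a finite complex of free modules whose fibers at \(s=0\) and \(s=1\) are exact below degree \(\ell\).

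Next I find the exceptional set. By semicontinuity, the locus where a fixed strand fails to be exact below top degree is Zariski closed and misses \(0\) and \(1\), so it is finite. I must make this uniform over all strands. For that I will use that the quotient \(\mathcal{S}/(G)\) vanishes in height greater than \(n\) at every good fiber. I also need that the Koszul homology of the graded \(\mathbf{C}[s]\)-module \(\mathcal{S}\) is a finitely generated module over \(\mathcal{S}\), so its support in \(\operatorname{Spec}\mathbf{C}[s]\) is determined by finitely many generators. Then a single \(g\) with \(g(0)g(1)\neq0\) kills all lower Koszul homology of \(\operatorname{gr}\mathcal{K}^\bullet_A\). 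I also choose \(g\) so that \(\mathcal{H}_{\mathrm{gr}}=\mathcal{S}_A/(G)\) is finite free: it is finitely generated, and after inverting the torsion-support polynomial it is projective, hence free. Its rank is the common fiber dimension \(\operatorname{Vol}_M(\Delta)\).

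Then I run the spectral sequence of the filtered complex. The filtration is exhaustive and bounded below, and by degree counting it is finite in each relevant range. Since \(\operatorname{gr}\) has cohomology only in degree \(\ell\), the sequence degenerates. This gives (1), and \(\operatorname{gr}\mathcal{H}\cong\mathcal{H}_{\mathrm{gr}}\) free gives (2). Base change (3) follows from the Hartshorne finite-complex argument quoted before the theorem, applied strandwise to the filtered pieces: exactness below top degree makes the top cohomology commute with \(\otimes A/(s-b)\). Combining with Proposition \ref{prop:relativederham} at \(s=1\) and \eqref{eq:toric} at \(s=0\) identifies the fibers. At \(s=0\) the differential is purely \(M_{G}\), so the height filtration splits as the Chen--Ruan degree, with height \(k\) corresponding to degree \(2k\). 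For (4), I lift a homogeneous basis of \(\operatorname{gr}\mathcal{H}\) to a filtered \(A\)-basis of \(\mathcal{H}\) and specialize it at \(s=0\) and at \(s=1\).

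I expect the main obstacle to be the uniformity in the third step. The strands are infinitely many, and I need one polynomial \(g\). I would first try to prove directly that \(\operatorname{gr}\mathcal{S}\) is Cohen--Macaulay over \(\mathbf{C}[s][G_1,\dots,G_\ell]\) and finite over it, generated by the box elements and monomials of height at most \(n\). That would make the Koszul complex a finite free resolution problem over a Noetherian ring, and the bad locus would then automatically be finite.
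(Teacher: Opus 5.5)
Your proposal is correct and has the same architecture as the paper's proof. Both use the shifted height filtration, whose associated graded is $K^\bullet(G_1,\dots,G_\ell;\mathcal{S})$. Both use Cohen--Macaulayness plus a system of parameters on the fibers, and localize away from finitely many bad values of $s$. Both get the rank from $(1-T)^\ell\operatorname{Hilb}(S;T)=h^\ast_\Delta(T)$. For degeneration of the filtration, your spectral sequence replaces the paper's explicit lift-and-subtract and strictness argument.

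The real difference is the step you flagged: choosing one $g$ for all strands.

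\emph{The paper's route.} It argues geometrically. $\operatorname{Proj}_{\mathbf{C}[s]}\bigl(\mathcal{S}/(G_1,\dots,G_\ell)\bigr)$ is proper over $\operatorname{Spec}\mathbf{C}[s]$. Hence the set of $b$ with $\dim_{\mathbf{C}}\mathcal{S}_b/(G_b)=\infty$ is closed, and since it misses $0$ and $1$ it is finite. Off this set the $G_{j,b}$ are a system of parameters in the Cohen--Macaulay ring $\mathcal{S}_b$ ($\cong S$ or $S_{\check\Sigma}$), hence a regular sequence. So all strands are fiberwise exact at once.

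\emph{Your route.} It also works once written out. Put $K=K^\bullet(G;\mathcal{S})$. Since $\mathcal{S}$ is Noetherian, the finitely many modules $\mathrm{H}^j(K)$, $j<\ell$, and the $\mathbf{C}[s]$-torsion of $\mathcal{S}/(G)$ are finitely generated graded $\mathcal{S}$-modules. Their $\mathbf{C}[s]$-supports are therefore finite unions $\bigcup_i V(\operatorname{ann}_{\mathbf{C}[s]}h_i)$ over homogeneous generators, hence closed.

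Now use the long exact sequence of $0\to K\xrightarrow{s-b}K\to K|_{s=b}\to 0$, with Nakayama applied to each graded piece (finitely generated over $\mathbf{C}[s]$). It shows that exactness at $s=0$ and $s=1$ keeps those points out of the supports. Conversely, once these modules vanish over $A$, the same sequence gives exactness on every fiber of $\operatorname{Spec}A$.

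\emph{What each buys.} Your version needs Cohen--Macaulayness only at the two endpoints. The paper's version identifies the bad locus concretely: it is where the rescaled polynomial $\sum_i a_i s^{\psi(v_i)}x^{m_i}$ degenerates, e.g.\ $1-27qs^3=0$ in the first appendix example ($\check Z=\mathbf{P}^2$).

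Three smaller points.
\begin{enumerate}
\item Your fallback, that $\mathcal{S}$ is finite over $\mathbf{C}[s][G_1,\dots,G_\ell]$, is false before localizing. In that example the fiber at a root of $1-27qs^3$ is $\mathbf{C}[h]$. So finiteness can only be a consequence over $A$, not a way to find $g$.
\item For Cohen--Macaulayness of $S_{\check\Sigma}$, cite Borisov--Mavlyutov as the paper does rather than a deformation heuristic, since the property does not pass from the general fiber to the special one. Also, the parameter criterion is linear independence of the $a_iv_i$ on each maximal cone (simpliciality), not spanning.
\item Item (3) needs no exactness at all. $\mathcal{H}$ is a cokernel, and cokernels commute with base change by right exactness.
\end{enumerate}
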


\begin{proof}
Apply the box decomposition \eqref{eq:boxdecomposition} in each
maximal cone. Multiplication within a cone has exponent \(b(c,c')=0\),
so every basis monomial of \(\mathcal{S}\) is a product of ray
monomials and one of finitely many box monomials. Thus \(\mathcal{S}\)
is a finitely generated positively graded \(\mathbf{C}[s]\)-algebra.
Each graded piece is still
finite free over \(\mathbf{C}[s]\), since its basis is indexed by
\(k\Delta\cap M\).

For \(b\ne0\), the map
\begin{equation}
 \mathcal{S}/(s-b)\rightarrow S~\mbox{defined by}~
 [c]\mapsto b^{\psi(c)}[c]
\end{equation}
is an algebra isomorphism. These fibers are Cohen--Macaulay because
\(S\) is a normal affine semigroup algebra. At the point \(b=0\),
the full partial semigroup ring \(S_{\check\Sigma}\) is also Cohen--Macaulay and 
of dimension \(\ell\). This follows from the Cohen--Macaulay result for the
partial semigroup ring of a coherent simplicial subdivision of a
convex cone with parameter zero.
Moreover, the
unweighted ray linear forms form a regular sequence. The coefficient
rescaling in \S\ref{sec:the_toric_fan_endpoint} 
transfers this statement to the forms \(G_j\).
See \cite{2003-Borisov-Mavlyutov-string-cohomology-of-calabi-yau-hypersurfaces-via-mirror-symmetry}*{Theorem 6.3 and Lemma 6.4}
for relevant detail.
All fibers have dimension \(\ell\). Here we write \(G_{\ell}\equiv G_{t}=G_{t,a}\).

Consider the closed projective subscheme
\begin{equation}
 \mathcal V=\operatorname{Proj}_{\mathbf{C}[s]}
 \bigl(\mathcal{S}/(G_1,\ldots,G_\ell)\bigr)\rightarrow
 \operatorname{Proj}_{\mathbf{C}[s]}
 \mathcal{S}.
\end{equation}
It is projective over \(\operatorname{Spec}\mathbf{C}[s]\).
and its image in
\(\operatorname{Spec}\mathbf{C}[s]\) is therefore closed.
This image misses both \(0\) and \(1\), for the
quotient at \(0\) is the finite-dimensional toric orbifold cohomology ring, and
the quotient at \(1\) is the finite-dimensional nondegenerate logarithmic
Jacobian ring. The image is thus a finite set. Choose \(g\in\mathbf{C}[s]\) vanishing
on that set and nowhere at \(0,1\).

Put \(A=\mathbf{C}[s]_{g}=\mathbf{C}[s,g^{-1}]\).
On every fiber over \(\operatorname{Spec}A\), the \(G_j\) are a
homogeneous system of parameters in a Cohen--Macaulay algebra. They
form a regular sequence.

Base change to \(A\) and set
\begin{equation}
 \mathcal{R}_A=\mathcal{S}_A/(G_1,\ldots,G_\ell),~\mbox{and}~
 \mathcal{S}_A=\mathcal{S}\otimes_{\mathbf{C}[s]}A.
\end{equation}
Fix an integer \(q\), and consider the complex
\begin{equation}
 E^p=(\mathcal{S}_A)_{q+p}\otimes\bigwedge^p\widetilde M_\mathbf{C},
 \quad 0\le p\le\ell.
\end{equation}
A graded piece with negative index is zero be convention. The
multiplication by \(G_{j}\) gives the differential, denoted by \(\mathrm{d}_G\).
We obtain a bounded complex
\begin{equation}\label{eq:strand}
 0\longrightarrow E^0\xrightarrow{\mathrm{d}_G^0}E^1
 \longrightarrow\cdots\longrightarrow E^{\ell-1}
 \xrightarrow{\mathrm{d}_G^{\ell-1}}E^\ell
 \longrightarrow0
\end{equation}
of finite free \(A\)-modules. Its last cokernel is isomorphic to
\((\mathcal{R}_A)_{q+\ell}\).
For every maximal ideal \(\mathfrak m=(s-b)\) of \(A\), we already proved
that \eqref{eq:strand} tensored with
\(\kappa(\mathfrak m)=\mathbf{C}\) is exact in degrees below \(\ell\).
We now explain why this fiberwise assertion gives the required assertion
over \(A\).

Localize at \(\mathfrak m\), and write \(B=A_{\mathfrak m}\) and
\(E_B^p=E^p\otimes_A B\).
The map
\(\mathrm{d}_G^0\otimes_B\kappa(\mathfrak m)\) is injective.
If \(r=\operatorname{rank}_B E_B^0>0\), its matrix therefore has
an \(r\times r\) minor nonzero modulo \(\mathfrak mB\). That
determinant is a unit of the local ring \(B\). After invertible
changes of bases, the matrix over \(B\) has the form
\begin{equation}
 \mathrm{d}_G^0=
 \begin{bmatrix} I_r\\0\end{bmatrix}.
\end{equation}
Consequently \(\mathrm{d}_G^0\) is
a split injection and
\begin{equation}
 E_B^1=\mathrm{d}_G^0(E_B^0)\oplus C^1
\end{equation}
for a finite free \(B\)-module \(C^1\).
If \(r=0\), take \(C^1=E_B^1\), and the same conclusion holds.
The identity \(\mathrm{d}_G^1\mathrm{d}_G^0=0\)
ensures that \(\mathrm{d}_G^1\) factors through \(C^1\).
Because the displayed decomposition is split, it remains a direct-sum
decomposition after tensoring with \(\kappa(\mathfrak m)\).
Fiberwise exactness at \(E^1\) now says that the induced map
\begin{equation}
 C^1\otimes_B\kappa(\mathfrak m)
 \longrightarrow E_B^2\otimes_B\kappa(\mathfrak m)
\end{equation}
is injective.

Set \(C^0=E_B^0\). Suppose the earlier terms have been split off, so
that the remaining complex begins
\begin{equation}
 0\longrightarrow C^p\longrightarrow E_B^{p+1}
 \longrightarrow\cdots\longrightarrow E_B^\ell\longrightarrow0,
\end{equation}
where \(C^p\) is finite free and the first arrow is injective on the
residue-field fiber. The same argument splits this arrow
and produces a finite free cokernel \(C^{p+1}\), i.e.,
\begin{equation}\label{eq:splitinduction}
 0\longrightarrow C^p\longrightarrow E_B^{p+1}
 \longrightarrow C^{p+1}\longrightarrow0.
\end{equation}
For \(p+1<\ell\), fiberwise exactness at \(E^{p+1}\), together with
the splitting, makes the induced map from \(C^{p+1}\) to
\(E_B^{p+2}\) injective on the fiber. This is the induction hypothesis
for the next step. At \(p=\ell-1\), it proves that the last cokernel
\begin{equation}
 C^\ell
 \cong(\mathcal{R}_A)_{q+\ell}\otimes_A B
\end{equation}
is finite free over \(B\). 

The argument works at every maximal ideal of \(A\). Since all terms
in a fixed strand are finite modules, its cohomology vanishes below
degree \(\ell\) if and only if it vanishes after these localizations.
Its last cokernel is finitely presented and locally free, hence is
a finite projective \(A\)-module. The local splittings
\eqref{eq:splitinduction} also show that tensoring a strand with any
\(A\)-module preserves exactness below its last term. In particular,
for every \(b\) with \(g(b)\ne0\) and every \(k\),
\begin{equation}\label{eq:gradedbasechange}
 (\mathcal{R}_A)_k\otimes_A A/(s-b)
 \cong
 \bigl(\mathcal{S}_b/(G_{1,b},\ldots,G_{\ell,b})\bigr)_k
\end{equation}
and no lower cohomology appears in the complex associated
with the index \(q=k-\ell\). Varying \(q\) proves exactness below
degree \(\ell\) for the entire graded multiplication Koszul complex.

Write \(\mathcal{R}_b=\mathcal{S}_b/(G_{1,b},\ldots,G_{\ell,b})\) and
\(L_b^{(j)}=\mathcal{S}_b/(G_{1,b},\ldots,G_{j,b})\), with
\(L_b^{(0)}=\mathcal{S}_b\). In each fiber the degree-one regular
sequence gives for each \(1\le j\le\ell\) an exact sequence
\begin{equation}
 0\longrightarrow L_b^{(j-1)}[-1]
 \xrightarrow{\ G_{j,b}\ }L_b^{(j-1)}
 \longrightarrow L_b^{(j)}\longrightarrow0.
\end{equation}
Here \(L[-1]_k=L_{k-1}\) is the shifted complex. Induction on \(j\) therefore multiplies
the Hilbert series by \(1-T\) at each step. Since every fiber
\(\mathcal{S}_b\) has the same lattice-point basis as \(S\), this gives
\begin{equation}\label{eq:hilbert}
 \begin{aligned}
 \operatorname{Hilb}(\mathcal{R}_b;T)
 &=(1-T)^\ell\operatorname{Hilb}(S;T)
   =h^{\ast}_\Delta(T)\\
 &=\sum_{k=0}^n
   \dim_\mathbf{C}\mathrm{H}_{\mathrm{CR}}^{2k}(\check{\mathcal Z};\mathbf{C})T^k.
 \end{aligned}
\end{equation}
The last equality is obtained by evaluating at \(b=0\).
For \(k>n\), \eqref{eq:gradedbasechange} says that the finite
\(A\)-module \((\mathcal{R}_A)_k\) has zero fiber at every maximal
ideal. Nakayama's lemma at each such ideal implies
\((\mathcal{R}_A)_k=0\).
Finally, \(A\) is a PID, so
\begin{equation}
 \mathcal{R}_A=\bigoplus_{k=0}^n(\mathcal{R}_A)_k
\end{equation}
is finite free whose degree-\(k\) rank is the coefficient of \(T^k\)
in \(h^{\ast}_\Delta(T)\). 

Let \(F_k\mathcal{S}_A\) be the span of monomials of height at most
\(k\), with the convention \(F_k=0\) for \(k<0\). Filter the twisted complex by
\begin{equation}\label{eq:shiftedfiltration}
 F_q\mathcal{K}_A^p
 =F_{q+p}\mathcal{S}_A\otimes\bigwedge^p\widetilde M_\mathbf{C}.
\end{equation}
Multiplication by \(G_j\) preserves this shifted filtration degree,
whereas \(s\theta_{j}\) lowers the shifted filtration degree by one. Thus
in the graded complex the differential \(D_{\mu,a}\) becomes \(G_{\mu,a}\) and 
\begin{equation}
 \operatorname{gr}_F\mathcal{K}_A^\bullet
 =K^\bullet(G_1,\ldots,G_\ell;\mathcal{S}_A).
\end{equation}
Its cohomology is concentrated at degree \(\ell\).
For a cocycle in a
cohomological degree below \(\ell\), its highest filtered part is
exact in the graded complex. Lift a graded primitive and subtract its
differential. The remainder has smaller filtered degree. The iteration
terminates because the filtration is bounded below and each element
has finite degree; hence the original cocycle is exact.

Suppose
\(\mathrm{d}\eta\in F_q\mathcal{K}_A^\ell\), with
\(\eta\in F_r\mathcal{K}_A^{\ell-1}\) and \(r>q\).
Its degree-\(r\) symbol is a cocycle in the graded complex at
degree \(\ell-1\). Graded exactness gives a lift
\(\theta\in F_r\mathcal{K}_A^{\ell-2}\) such that
\(\eta-\mathrm{d}\theta\) has smaller filtered degree and the same
differential. Repeating gives a primitive in
\(F_q\mathcal{K}_A^{\ell-1}\). Thus the last differential is strict, i.e.
\begin{equation}
 \operatorname{im}(\mathrm{d})\cap F_q\mathcal{K}_A^\ell
 =\mathrm{d}(F_q\mathcal{K}_A^{\ell-1}).
\end{equation}
Consequently taking the quotient by boundaries commutes with passing
to the associated graded. Together these arguments prove the vanishing
of the cohomology
\begin{equation}
 \mathrm{H}^j(\mathcal{K}_A^\bullet)=0\ (j\ne\ell)
\end{equation}
and 
\begin{equation}
 \operatorname{Gr}_{\deg}\mathcal{H}\cong\mathcal{R}_A.
\end{equation}
Here the top filtration is reindexed from \(q\) to \(q+\ell\), so
that it is the quotient monomial-degree filtration. Its successive
quotients are finite free and vanish outside \(0,\ldots,n\).
The resulting finite filtration splits over \(A\); hence \(\mathcal{H}\)
is finite free.
\end{proof}

\begin{remark}
The theorem is a comparison of filtered complex vector spaces at a
fixed coefficient vector \(a\). It relies on the relative de Rham
comparison of \S\ref{sec:relative_cohomology_and_the_associated_graded_space} 
and the toric orbifold presentation of
\S\ref{sec:the_toric_fan_endpoint} as inputs. 
It is clear that the isomorphism given in
Theorem \ref{thm:comparison} (4) is not 
canonical.
\end{remark}

\begin{remark}
We can bypass the use of the interpolation complex 
\(\mathcal{K}^{\bullet}\) and Theorem \ref{thm:comparison} for the
proof of the hyperplane conjecture; the only result
we need is the equality
\begin{equation}
  \dim \mathrm{H}^{n}(T,Y_{a};\mathbf{C}) = \dim \mathrm{H}^{\bullet}_{\mathrm{CR}}(\check{\mathcal{Z}};\mathbf{C}).
\end{equation}
However, we still include it because \(\mathcal{K}^{\bullet}\)
provides a bridge to connection the residue of the Gauss--Manin
connection with the cup product morphism on the \(A\)-side
(cf.~Theorem~\ref{thm:monodromy-residue-and-cup-prod}).
\end{remark}

\section{Proof of the hyperplane conjecture}
\label{sec:proof_of_the_hyperplane_conjecture}
We will prove Theorem \ref{thm:main-1-intro} and Theorem \ref{thm:main-2-intro}
(and hence Conjecture \ref{conj:HLY}) in this section.

For clarity first we assume that \(\check Z\) is smooth, and put
\(R=\mathrm{H}^\bullet(\check Z;\mathbf{C})\). 
(We will remove the assumption later.)
Let us recall the definition of the \(B\)-series for 
CY hypersurfaces in \(Z\).
Let \(\Delta\subset M_{\mathbf{R}}\) be a reflexive polytope 
and \(\nabla=\Delta^{\vee}\subset N_{\mathbf{R}}\) be its dual. Put
\(\Delta\cap M=\{m_{0}=\mathbf{0},m_{1},\ldots,m_{p}\}\).
Each non-zero integral point \(m_{i}\)
corresponds to a \(1\)-cone in a MPCP desingularization
\(\check{Z}\to\mathbf{P}_{\nabla}\) whose
divisor class in \(R\) is denoted by \(D_{i}\).
Let \(A\) be the integral matrix
\begin{equation}
  A:=
  \begin{bmatrix}
    1 & 1 & \cdots & 1\\
    \mathbf{0} & m_{1} & \cdots & m_{p}
  \end{bmatrix}\in\mathrm{Mat}_{(n+1)\times (p+1)}(\mathbf{Z})
\end{equation}
and 
\begin{equation}
  \beta:=\begin{bmatrix}
  -1\\
  \mathbf{0}
  \end{bmatrix}\in \mathbf{C}^{n+1}.
\end{equation}
As observed by Batyrev, period integrals for
CY hypersurfaces in \(Z\) are governed by the GKZ
system \(M_{A}(\beta)\). 
Let \(L:=\operatorname{Ker}(A)\). 
Inspired by mirror symmetry,
Hosono, Lian, and Yau introduced
\begin{equation}\label{eq:HLYrenormalizeddelta}
 \mathcal B(a)
 =\frac{1}{a_0}\sum_{\substack{\ell\in L,\ell_0\le0}}(-1)^{\ell_0}
 \frac{\Gamma(1-\ell_0-D_0)}
      {\prod_{i>0}\Gamma(1+\ell_i+D_i)}a^{\ell+D}.
\end{equation}
This is a cohomology-valued series whose coefficients are 
(formal) functions in \(a\) which are convergent near
a large complex structure limit (LCSL) point.

Let \(\mathfrak{Sol}\) be the local solution space for 
the GKZ system \(M_{A}(\beta)\).
The Poincar\'{e} pairing turns \(\mathcal{B}(a)\) into a solution-valued
functional
\begin{equation}
\label{eq:Bfunctional}
 \mathcal P_{\mathcal{B}(a)}\colon R\rightarrow\mathfrak{Sol}~\mbox{via}~
 \alpha\mapsto\int_{\check Z}\alpha\cup\mathcal B(a).
\end{equation}
Recall that the space of geometric cycles considered in this paper is defined by
\begin{equation}
   W=\operatorname{Im}\left(
 \mathrm{H}_n(Z,E_{a})\xrightarrow{\partial}\mathrm{H}_{n-1}(E_{a})
 \xrightarrow{\operatorname{Tube}}\mathrm{H}_n(U_{a})
 \xrightarrow{q_{a}}\mathrm{H}_n(U_{a},B_{a})\right).
\end{equation}
\begin{proposition}\label{prop:Bimagecriterion}
Cup product on the series corresponds to precomposition on its
functional:
\begin{equation}\label{eq:Bprecomposition}
 \mathcal P_{e\,\cup\,\mathcal B}
 =\mathcal P_{\mathcal B}\circ(e\cup-).
\end{equation}
Consequently the span of the scalar components of
\(e\cup\mathcal{B}\) is \(\mathcal{P}_{\mathcal B}(e\cup R)\).
Let \(\mathfrak{Per}(W)\subset\mathfrak{Sol}\) 
denote the geometric period map with the
same parameter and normalization conventions. Whenever
\(\mathcal P_{\mathcal B}\) is injective and
\(\dim\mathfrak{Per}(W)=\operatorname{rank}(e\cup-)\), the inclusion
\begin{equation}\label{eq:Bremaininginclusion}
 \mathcal P_{\mathcal B}(e\cup R)\subseteq\mathfrak{Per}(W)
\end{equation}
implies equality. In particular, the dimension condition holds if
\(\dim W=\operatorname{rank}(e\cup-)\) and the period map is injective
on \(W\).
\end{proposition}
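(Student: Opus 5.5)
The proof is essentially formal linear algebra, and I would run it in three steps.

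\emph{Step 1: the precomposition identity \eqref{eq:Bprecomposition}.} For \(\alpha\in R\) we have, by definition \eqref{eq:Bfunctional},
\begin{equation}
 \mathcal P_{e\cup\mathcal B}(\alpha)=\int_{\check Z}\alpha\cup(e\cup\mathcal B(a)).
\end{equation}
Here \(\mathcal B(a)\) is a formal series whose coefficients are elements of \(R\). Cup product in \(R\) is associative and graded commutative, and \(e\) has even degree, so \(\alpha\cup(e\cup\mathcal B)=(e\cup\alpha)\cup\mathcal B\) termwise in the expansion in \(a^{\ell+D}\). The only care needed is that \(a^{D}=\exp(\sum_i D_i\log a_i)\) is a polynomial in nilpotent classes. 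Termwise integration then gives \(\mathcal P_{\mathcal B}(e\cup\alpha)\).

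\emph{Step 2: the span of the scalar components.} Choose a basis \(\{\phi_k\}\) of \(R\) with Poincar\'e dual basis \(\{\phi^k\}\), and write \(e\cup\mathcal B=\sum_k \beta_k(a)\phi_k\). Then \(\beta_k=\int_{\check Z}\phi^k\cup e\cup\mathcal B=\mathcal P_{e\cup\mathcal B}(\phi^k)\). Hence the span of the \(\beta_k\) is the image of \(\mathcal P_{e\cup\mathcal B}\), which by Step 1 equals \(\mathcal P_{\mathcal B}(e\cup R)\). This uses nondegeneracy of Poincar\'e duality on the smooth projective \(\check Z\). In the orbifold case one would use the Chen--Ruan pairing instead.

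\emph{Step 3: from inclusion to equality.} If \(\mathcal P_{\mathcal B}\) is injective, then
\begin{equation}
 \dim\mathcal P_{\mathcal B}(e\cup R)=\dim(e\cup R)=\operatorname{rank}(e\cup-).
\end{equation}
Under the hypothesis \(\dim\mathfrak{Per}(W)=\operatorname{rank}(e\cup-)\), the inclusion \eqref{eq:Bremaininginclusion} is an inclusion of finite-dimensional subspaces of \(\mathfrak{Sol}\) of equal dimension, hence an equality. For the final sentence: \(\mathfrak{Per}(W)\) is the image of the linear period map \(W\to\mathfrak{Sol}\), \(\gamma\mapsto\int_\gamma\omega_\delta\). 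If this map is injective on \(W\), its image has dimension \(\dim W\), which reduces the condition to \(\dim W=\operatorname{rank}(e\cup-)\).

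The only step I expect to need any attention is the justification in Step 1 for manipulating \(\mathcal B(a)\) termwise. The cleanest route is to treat \(\mathcal B\) as an element of \(R\otimes\mathbf{C}[[\text{local coordinates}]][\log a]\), on which the pairing and cup product extend coefficientwise. All remaining steps are elementary dimension counting.
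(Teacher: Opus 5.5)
Your proposal is correct and is the formal verification the paper has in mind; the paper states this proposition without a written proof. Your three steps are the natural ones: associativity and commutativity of the cup product with the even class \(e\), extended coefficientwise to \(R\otimes\mathcal O\); nondegeneracy of the pairing, which identifies the span of the components with the image of \(\mathcal P_{e\cup\mathcal B}\); and a dimension count (where injectivity of \(\mathcal P_{\mathcal B}\) on \(e\cup R\) alone would already suffice).
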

\begin{remark}
This is the strategy adopted by Lian and Zhu for \(\mathbf{P}^{n}\) case
\cite {2021-Lian-Zhu-on-the-hyperplane-conjecture-for-periods-of-calabi-yau-hypersurfaces-in-pn}.
In the \(\mathbf{P}^{n}\) situation, we have \(W=\mathrm{H}_{n}(U_{a};\mathbf{C})\)
and the period sheaf \(\mathfrak{Per}(W)\) is completely characterized 
by the tautological system \cites{2014-Bloch-Huang-Lian-Srinivas-Yau-on-the-holonomic-rank-problem,2016-Huang-Lian-Zhu-period-integrals-and-riemann-hilbert-correspondence}.
Hence it is possible to verify the inclusion.
Then Lian and Zhu subsequently computed the dimension rank of \(e\cup -\).
\end{remark}

However, an abstract isomorphism may not be compatible with the \(B\)-series.
We will follow a different approach. Instead of one cohomology-valued
series, we consider an indexed family of such series; they form a solution
to a so-called better behaved GKZ systems.
The full raw (undeformed) indexed family, which we will define shortly, 
gives rise to horizontal functionals on the monomial quotient.
Relative de Rham comparison identifies these functional with flat relative cycles,
and the distinguished index gives compact hypersurface periods. 


Set \(\delta=v_0=(0,1)\) and 
recall that \(\ell=n+1\). 
Suppose that we
have an integral matrix \(A\in\mathrm{Mat}_{\ell\times (p+1)}(\mathbf{Z})\) 
and a parameter \(\beta\in\mathbf{C}^{\ell}\).
Let \(a_{0},a_{1},\ldots,a_{p}\) be coordinates on \(\mathbf{C}^{p+1}\)
labeled by the columns of \(A\) and let \(\partial_{i}=
\partial\slash\partial a_{i}\). 
The GKZ system \(M_{A}(\beta)\) consists
of two types of operators:
\begin{itemize}
  \item[\textbf{(a)}] (box operators) \(\partial^{u_{+}}-\partial^{u_{-}}\) for \(u_{\pm}\in\mathbf{Z}^{p+1}_{\ge 0}\)
  with \(Au_{+}=Au_{-}\).
  \item[\textbf{(b)}] (Euler operators) \(E_\mu-\langle\mu,\beta\rangle\) with \(E_\mu=
  \sum_i\langle\mu,v_i\rangle a_i\partial_{a_i}\), \(\mu\in\widetilde{N}\).
\end{itemize}
In particular, the parameter for
periods for CY hypersurfaces is \(-\delta\).

Assume further that \(A\) is homogeneous. In this case, 
we denote \(C\) the cone generated by columns of \(A\) in
\(\mathbf{R}^{\ell}\) and identify \(\mathbf{Z}^{\ell}\) with \(\widetilde{M}\).
A solution of the better-behaved system \(\operatorname{bbGKZ}(A,C,\beta)\)
is a family of scalar functions \(\{\Phi_{c}\}_{c\in C\cap\widetilde M}\)
such that
\begin{equation}\label{eq:bbindexed}
 \partial_{a_i}\Phi_c=\Phi_{c+v_i},\quad
 E_\mu\Phi_c=\langle\mu,\beta-c\rangle\Phi_c.
\end{equation}
Each component solves the GKZ system \(M_A(\beta-c)\). Indeed, repeated differentiation
gives \(\partial_a^u\Phi_c=\Phi_{c+Au}\), and hence the box relations hold.
A cohomology-valued indexed series packages such scalar solutions.
Applying any fixed linear functional on the cohomology space gives
one scalar solution family. It is the entire family, not an individual
component, that is a solution of the better-behaved system.

First work in the smooth case. Put
\(R=\mathrm H^\bullet(\check Z;\mathbf{C})\). There are divisor classes
\(D_0,\ldots,D_r\) satisfying
\begin{equation}
 \sum_i v_iD_i=0,~\mbox{and}~D_0=-e.
\end{equation}
For \(c\in C\cap\widetilde M\), set
\begin{equation}
 L_c=\{\ell\in\mathbf{Z}^{r+1}\mid A\ell=-c\},
\end{equation}
and define the cohomology-valued series (cf.~
\cite{2013-Horja-toric-deligne-mumford-stacks-and-the-better-behaved-version-of-the-gkz-hypergeometric-system})
\begin{equation}
  \mathcal B_c^{\mathrm{raw}}(a)
  =\sum_{\ell\in L_c}\prod_{i=0}^r\frac{a_i^{\ell_i+D_i}}{\Gamma(1+\ell_i+D_i)}.
\end{equation}
Here the supscript ``raw'' indicates that we do not regularize the series.

The raw series is interpreted using the intersection theory on \(R\) 
and its convergence
chamber. Reciprocal Gamma functions are entire, and we put
\(a_i^{\ell_{i}+D_i}=\exp((\ell_{i}+D_i)\log a_i)\). 
The identity of Gamma functions and the linear relations
give
\begin{equation}\label{eq:rawindexed}
 \begin{aligned}
 \partial_{a_i}\mathcal B_c^{\mathrm{raw}}
   -\mathcal B_{c+v_i}^{\mathrm{raw}}&=0,\\
 \langle\mu,c\rangle\mathcal B_c^{\mathrm{raw}}
   +\sum_i a_i\langle\mu,v_i\rangle
       \mathcal B_{c+v_i}^{\mathrm{raw}}&=0.
 \end{aligned}
\end{equation}
In particular, the full family of cohomology-valued series \((\mathcal B_{c}^{\mathrm{raw}})_{c\in 
C\cap\widetilde{M}}\) solves
the better-behaved GKZ system \(\mathrm{bbGKZ}(A,C,0)\), whereas 
its \(c\)-component solves the ordinary system \(M_{A}(-c)\).
In addition, we have
\begin{equation}\label{eq:rawderivative}
 \partial_{a_0}\mathcal B_0^{\mathrm{raw}}
 =\mathcal B_\delta^{\mathrm{raw}}.
\end{equation}

The raw degree-zero scalar components
are \(1\) at \(c=0\) and zero at positive-height integral indices.

We can relate our \(\mathcal{B}_{\delta}^{\mathrm{raw}}(a)\) with
the cohomology-valued series \(\mathcal{B}_{\delta}(a)\) introduced by Hosono, Lian, and Yau.

Writing \(L=L_0\), the shifted expression at \(\delta\) is
\begin{equation}
\label{eq:rawdeltaindexed}
 \mathcal B_\delta^{\mathrm{raw}}
 =\frac1{a_0}\sum_{\ell\in L}
 \frac{a^{\ell+D}}{\Gamma(\ell_0+D_0)
 \prod_{i>0}\Gamma(1+\ell_i+D_i)}.
\end{equation}
Following Hosono, Lian, and Yau, we define the regularized series by
\begin{equation}\label{eq:renormalizeddelta}
 \mathcal B_\delta
 =\frac1{a_0}\sum_{\substack{\ell\in L\\\ell_0\le0}}(-1)^{\ell_0}
 \frac{\Gamma(1-\ell_0-D_0)}
      {\prod_{i>0}\Gamma(1+\ell_i+D_i)}a^{\ell+D}.
\end{equation}
The Euler's reflection formula gives
\begin{equation}\label{eq:rawreflection}
 \mathcal B_\delta^{\mathrm{raw}}
 =-e\cup U(e)\cup\mathcal B_\delta,~\mbox{where}~
 U(e)=\frac{\sin(\pi e)}{\pi e}.
\end{equation}

\begin{remark}
For a general \(c=(m,k)\), the Euler's reflection formula formally gives
\begin{equation}
 \mathcal B_c(a)=\sum_{\ell\in L_c}(-1)^{\ell_0+1}
 \frac{\Gamma(-\ell_0-D_0)}
      {\prod_{i>0}\Gamma(1+\ell_i+D_i)}a^{\ell+D}.
\end{equation}
\end{remark}

For \(\ell_0=-N\le 0\), the two central factors in \eqref{eq:renormalizeddelta}
and \eqref{eq:rawdeltaindexed} are
\begin{equation}
 \begin{aligned}
 \frac1{\Gamma(D_0-N)}
 &=\frac{D_0\prod_{j=1}^N(D_0-j)}{\Gamma(1+D_0)},\\
 (-1)^N\Gamma(1+N-D_0)
 &=\Gamma(1-D_0)\prod_{j=1}^N(D_0-j).
 \end{aligned}
\end{equation}
Thus the finite-product expression already exhibits the extra factor
\(D_0=-e\) in the raw series. The remaining ratio is
\(1/(\Gamma(1+D_0)\Gamma(1-D_0))\); this gives 
the identity \eqref{eq:rawreflection}.

The limiting prescription of \cite[(3.3)]{1996-Hosono-Lian-Yau-gkz-generalized-hypergeometric-systems-in-mirror-symmetry-of-calabi-yau-hypersurfaces} 
at \(\gamma_0=-1\) is
\begin{equation}
 \frac{\Gamma(\epsilon)}{\Gamma(\ell_0+\epsilon)}
 =(-1)^{\ell_0}
   \frac{\Gamma(1-\ell_0-\epsilon)}{\Gamma(1-\epsilon)}.
\end{equation}
For \(\ell_0\le0\) its limit is \((-1)^{\ell_0}\Gamma(-\ell_0+1)\).
The sine factors from the refletion formula cancel in this
ratio. Multiplication of the raw deformed series by \(\Gamma(D_0)\)
alone gives \(\mathcal B_\delta/\Gamma(1-D_0)\), rather than
\(\mathcal B_\delta\). This distinction matters before specialization.

For a relation basis \(\ell^{(\alpha)}\), write
\(D_i=\sum_\alpha\ell_i^{(\alpha)}p_\alpha\), and choose
\begin{equation}
 \log q_\alpha=\sum_i\ell_i^{(\alpha)}\log a_i
                  +\pi\mathrm i\,\ell_0^{(\alpha)}.
\end{equation}
The Frobenius series \(w_0(q,p)\) of \cite[(3.5)]{1996-Hosono-Lian-Yau-gkz-generalized-hypergeometric-systems-in-mirror-symmetry-of-calabi-yau-hypersurfaces} 
then satisfies
\begin{equation}
 w_0(q,p)=a_0\exp(-\pi\mathrm i\, e)\cup \mathcal B_\delta(a).
\end{equation}
All comparisons of image spaces must use the same scalar normalization;
the displayed branch factor is invertible and preserves \(e\cup R\).

\subsection{The Gauss--Manin connection on the monomial quotient}
Now we vary \(a\) in the nondegenerate parameter locus 
and consider the bundle \(\mathcal{Q}\).
Over which we define
\begin{equation}\label{eq:monomialGM}
 \nabla_{\partial/\partial a_i}[c]=[c+v_i]
\end{equation}
and extend by the Leibniz rule. This descends to the quotient
and defines a connection on the bundle \(\mathcal{Q}\). 

\begin{proposition}\label{prop:horizontalTheta}
The comparison \(\Theta_a\) in Proposition~\ref{prop:relativederham}
intertwines \eqref{eq:monomialGM} with the relative Gauss--Manin connection.
\end{proposition}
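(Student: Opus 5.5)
The plan is to check the intertwining on the monomial generators \([c]\), using the explicit formula for \(\Theta_a\) from Proposition~\ref{prop:relativederham}. Since \(\Theta_a\) is linear and both connections satisfy the Leibniz rule, this suffices. First I will make sense of \eqref{eq:monomialGM} as a connection on \(\mathcal{Q}\). Here \(\mathcal{Q}\) is the family of quotients \(S/\sum_j D_{\mu_j,a}S\), regarded as a quotient of the trivial bundle with fibre \(S\). So \(\nabla_{\partial/\partial a_i}\) acts on sections \(\sum_c h_c(a)[c]\) by \(\sum_c(\partial_{a_i}h_c)[c]+h_c[c+v_i]\). To see that it descends, I compute the commutator with the differential. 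Since \(D_{\mu,a}=\theta_\mu+M_{\theta_\mu F_a}\) and \(\partial_{a_i}F_a=[v_i]\), one gets \([\nabla_{\partial_{a_i}},D_{\mu,a}]=M_{\langle\mu,v_i\rangle[v_i]}-[M_{[v_i]},\theta_\mu]=\langle\mu,v_i\rangle M_{[v_i]}-\langle\mu,v_i\rangle M_{[v_i]}=0\). Here I use \([\theta_\mu,M_{[v_i]}]=\langle\mu,v_i\rangle M_{[v_i]}\). Hence the image \(\sum_j D_{\mu_j,a}S\) is preserved and the connection is well defined. Flatness follows because the operators \(M_{[v_i]}\) commute with one another.

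Next I compare with the Gauss--Manin connection on \(\mathrm{H}^n(T,Y_a)\). Pairing with a flat family of relative cycles \(\gamma_a\), horizontality of \(\Theta_a\) amounts to the identity \(\partial_{a_i}\Pi_c(a)=\Pi_{c+v_i}(a)\) for all \(c\). For \(c=(m,k)\) with \(k>0\), \(\partial\gamma_a\) is a flat family in \(\mathrm{H}_{n-1}(Y_a)\). By the tube isomorphism, \(\int_{\partial\gamma_a}\operatorname{Res}\omega=\frac{1}{2\pi\mathrm{i}}\int_{\operatorname{Tube}(\partial\gamma_a)}\omega\), where the tube lies in \(T\setminus Y_a\) and can be chosen locally constant in \(a\). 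So the derivative can be taken under the integral sign, and
\begin{equation}
 \partial_{a_i}\left((-1)^{k-1}\Gamma(k)\frac{x^m\Omega}{f_a^k}\right)=(-1)^{k}\Gamma(k+1)\frac{x^{m+m_i}\Omega}{f_a^{k+1}},
\end{equation}
which is exactly \(\omega_{c+v_i}\). This is the identity \(A_{k+1}=-kA_k\) already used for \eqref{eq:heightcokernel}.

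For \(c=0\), I must show \(\partial_{a_i}\Pi_0=\Pi_{v_i}\), i.e.
\begin{equation}
 -\partial_{a_i}\int_{\gamma_a}\Omega=\int_{\partial\gamma_a}\operatorname{Res}\frac{x^{m_i}\Omega}{f_a}.
\end{equation}
This is the main obstacle, because the chain \(\gamma_a\) has boundary on the moving hypersurface \(Y_a\) and cannot be held fixed. I would first try a variation-of-domain (Leray/Stokes) argument. Represent the relative class locally in \(a\) by \(\gamma_{a}=\gamma_{a_0}\cup\) the sweep of \(\partial\gamma\) between \(Y_{a_0}\) and \(Y_a\). The derivative of \(\int\Omega\) is then the flux of \(\Omega\) through \(\partial\gamma_a\) along a normal vector field \(\xi\) with \(\xi(f_a)=-x^{m_i}\), which moves \(Y_a\) as \(a_i\) varies. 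This flux is \(\int_{\partial\gamma_a}\iota_\xi\Omega\). On \(Y_a\) one has \(\iota_\xi\Omega|_{Y_a}=-\operatorname{Res}(x^{m_i}\Omega/f_a)\), since \(\operatorname{Res}(g\Omega/f)=\iota_{\xi'}\Omega|_Y\) for any \(\xi'\) with \(\xi'(f)=g\). This yields the identity, with the sign fixed by the convention for \(\delta_{Y_a}\).

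As an alternative, one can use the de Rham model of relative cohomology as a cone. The class \((-\Omega,0)\) differentiates to \((0,\iota_\xi\Omega|_{Y_a})\) up to the cone differential, and \(\iota_\xi\Omega|_{Y_a}\) is then identified with \(-\operatorname{Res}_{Y_a}(x^{m_i}\Omega/f_a)\). Once this sign check matches the normalization in Proposition~\ref{prop:relativederham}, the case \(c=0\) and the case \(k>0\) together give \(\Theta_a\circ\nabla=\nabla^{\mathrm{GM}}\circ\Theta_a\) on generators, which proves the statement.
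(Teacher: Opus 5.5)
Your proposal is correct and follows essentially the paper's argument. For positive height you use \(\partial_{a_i}\omega_c=\omega_{c+v_i}\) together with compatibility of residue and connecting map with Gauss--Manin, which you make concrete through locally constant tubes. For \([0]\) you use a vector field \(V_i\) with \(\mathrm{d}f_a(V_i)=-x^{m_i}\), Cartan's formula and Stokes, and the same sign identity \(\operatorname{Res}_{Y_a}(x^{m_i}\Omega/f_a)=-\iota_{V_i}\Omega|_{Y_a}\). Your check that the connection descends to \(\mathcal{Q}\) is a useful addition, since the paper only asserts it. One small fix there: the commutator should read \(M_{\langle\mu,v_i\rangle[v_i]}+[M_{[v_i]},\theta_\mu]\), equivalently \(M_{\langle\mu,v_i\rangle[v_i]}-[\theta_\mu,M_{[v_i]}]\); this is a harmless typo, since your evaluation to zero is right.
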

\begin{proof}
For \(c=(m,k)\) with positive height, put
\begin{equation}
\label{eq:def-omegac}
  \omega_c=(-1)^{k-1}\Gamma(k)\frac{x^m\Omega}{f_a^k}.
\end{equation}
Then \(\partial_{a_i}\omega_c=\omega_{c+v_i}\). Residue and the connecting
map commute with the induced Gauss--Manin connections, proving the
assertion on these classes.
For the height-zero generator \([0]\), we consider its representative \((-\Omega,0)\).
Locally choose a vector field \(V_i\) lifting parameter motion along
\(Y_a\), so that \((\mathrm df_a)(V_i)=-x^{m_i}\) on \(Y_a\).
The residue of \(x^{m_i}\Omega/f_a\) is simply
\begin{equation}
  \operatorname{Res}_{Y_{a}}\left(\frac{x^{m_{i}} \Omega}{f_{a}}\right) = -\iota_{V_i}\Omega|_{Y_a}.
\end{equation}
Differentiation of the integral over a moving relative chain \(\Gamma_{a}\), we obtain
\begin{equation}
   -\frac{\partial}{\partial a_{i}}\int_{\Gamma_a}\Omega=
   -\int_{\Gamma_{a}}\mathcal{L}_{V_{i}}\Omega.
\end{equation} 
Applying Cartan's formula and Stokes' theorem, we obtain
\begin{equation}
  \int_{\Gamma_{a}}\mathcal{L}_{V_{i}}\Omega=
  \int_{\Gamma_{a}}(\mathrm{d}\iota_{V_{i}}\Omega+\iota_{V_{i}}\mathrm{d}\Omega)
  =-\int_{\partial\Gamma_{a}} 
   \operatorname{Res}_{Y_{a}}\left(\frac{x^{m_{i}} \Omega}{f_{a}}\right).
\end{equation}
This gives the integral of this residue
over \(\partial\Gamma_a\). With the connecting-map convention fixed
in \eqref{eq:period}, this is exactly \(\Theta_a([v_i])\).
\end{proof}

\subsection{Functionals and geometric periods}
So far, every calculation is on the affine torus \(T\).

To identify geometric periods, we note that an MPCP 
desingularization \(Z\to \mathbf{P}_{\Delta}\) may not be
smooth. We take a toric resolution \(\pi\colon \widetilde{Z}\to Z\). Regarding
\(T\) as a maximal torus in \(\widetilde{Z}\) and \(\widetilde{E}_{a}:=\overline{Y}_{a}\subset
\widetilde{Z}\), we have a duality isomorphism
\begin{equation}
   \mathrm{H}_{n}(\widetilde{Z}\setminus \widetilde{E}_{a},
   (\widetilde{Z}\setminus \widetilde{E}_{a})\cap \widetilde{D};\mathbf{C})
   \cong \mathrm{H}^{n}(T,Y_{a};\mathbf{C}).
\end{equation} 
Here \(\widetilde{D}\) is the union of toric divisors on \(\widetilde{Z}\).

Recall that for \(c=(m,k)\in C\cap\widetilde{M}\) with \(k\ge 1\), we define 
\begin{equation}
  \omega_{c}:=(-1)^{k-1}\Gamma(k)\frac{x^m\Omega}{f_a^k}.
\end{equation}
We can view \(\omega_{c}\) as a meromorphic top form on \(\widetilde{Z}\). 
Under the nondegeneracy assumption, \(\pi^{\ast}E_{a}=\widetilde{E}_{a}\)
and hence \(\omega_{c}\) is a meromorphic top form
with a pole along \(\widetilde{E}_{a}\). Notice that 
\(E_{a}\sim -K_{Z}\), so the polytope for \(\widetilde{E}_{a}\)
remains \(\Delta\); it is the same as \(E_{a}\).
On the other hand,
\begin{equation}
  K_{\widetilde{Z}} = \pi^{\ast} K_{Z} + A,\quad A\ge 0.
\end{equation}
In general, \(\widetilde{E}_{a}\) is not an anticanonical
hypersurface in \(\widetilde{Z}\). The rational form 
\(\omega_{c}\), viewed as a meromorphic top form on 
\(\widetilde{Z}\), has a pole along \(\widetilde{E}_{a}\)
and would acquire additional zeros from the resolution.
In fact, 
\begin{equation}
\begin{aligned}
    \operatorname{div}_{\widetilde{Z}}\left(\omega_{c}\right)+k\tilde{E}_{a}
    &=-k \pi^{\ast}K_{Z}+K_{\widetilde{Z}}+
    \sum \langle m,\widetilde{\rho}_{i}\rangle \widetilde{D}_{i}\\
    &=-(k-1)\pi^{\ast}K_{Z}+A+\pi^{\ast}\left(\sum\langle m,\rho_{i}\rangle D_{i}\right)\\
    &\ge 0
\end{aligned}
\end{equation}
as long as \(c=(m,k)\in C\cap\widetilde{M}\) is in the interior.
Therefore, for an interior lattice point \(c\),
the notion of integrating \(\omega_{c}\) on \(\widetilde{Z}\)
against relative cycles in \(\mathrm{H}_{n}(\widetilde{U}_{a},
\widetilde{B}_{a};\mathbf{C})\) is thus defined. 
There is a commutative diagram
\begin{equation}
  \begin{tikzcd}
    &\mathrm{H}_{n-1}(E_{a};\mathbf{C})\ar[r,""] &\mathbf{C}\ar[d,equal]\\
    &\mathrm{H}_{n-1}(\widetilde E_{a};\mathbf{C})
    \ar[u,"\rho_{\ast}"]\ar[r,""]  &\mathbf{C}
  \end{tikzcd}
\end{equation}
induced from \(\omega_{c}\) where \(\rho\colon\widetilde{E}_{a}\to E_{a}\)
is the restriction of \(\pi\) and 
the upper (resp.~lower) horizontal map is 
\begin{equation}
  \int_{\bullet}\operatorname{Res}_{E_{a}}\left(\omega_{c}\right)\quad \left(
    \mbox{resp}.~\int_{\bullet}\operatorname{Res}_{\widetilde E_{a}}\pi^{\ast}(\omega_{c})\right).
\end{equation}
The integration on the (CY) orbifold \(E_{a}\)
is given by the pairing between
\begin{equation}
  \mathrm{H}^{n-1}(E_{a};\mathbf{C})~\mbox{and}~\mathrm{H}_{n-1}(E_{a};\mathbf{C}).
\end{equation}
Here we write the pullback \(\pi^{\ast}(\omega_{c})\)
to emphasize that this is regarded as an object on \(\widetilde{Z}\).

To describe period integrals explicitly,
we will work on \(\widetilde{Z}\) by passing to a toric
resolution and henceforth assume that \(Z\) is smooth.
We put \(U_{a}=Z\setminus E_{a}\) and \(B_{a}=U_{a}\cap D\) as before.
We shall keep in mind that \(E_{a}\) may not be an
anticanonical hypersurface in this setting.

Let \(R\) be a finite-dimensional cohomology ring.
(Recall that \(R\) will be the full Chen--Ruan cohomology ring
\(\mathrm{H}^{\bullet}_{\mathrm{CR}}(\check{\mathcal{Z}};\mathbf{C})\) at the end.)
\begin{proposition}
\label{prop:raw-period-identification}
Let \(\{G_{c}(a)\}_{c}\) be an analytic
family of \(R\)-valued series satisfying \eqref{eq:rawindexed}. For each constant, i.e.,
independent of the parameter \(a\), functional \(\lambda\colon R\to\mathbf{C}\), there is a unique locally flat family
\(\Gamma_\lambda(a)\in\mathrm H_n(T,Y_a;\mathbf{C})\) such that
\begin{equation}\label{eq:allindexedperiods}
 \begin{aligned}
 \lambda(G_0(a))&=-\int_{\Gamma_\lambda(a)}\Omega,\\
 \lambda(G_c(a))&=
 \int_{\partial\Gamma_\lambda(a)}\operatorname{Res}_{Y_a}\omega_c
 \quad (\deg c>0).
 \end{aligned}
\end{equation}
In particular, for \(c\) in the interior of \(C\),
this function belongs to \(J_{c}(W)\), where
\(J_{c}\) is integration of \(\omega_{c}\) defined
in \eqref{eq:def-omegac} over flat relative cycles
and \(W\) is the image of
\begin{equation}
\label{eq:tubesubspace}
 \mathrm{H}_n(Z,E_{a})\xrightarrow{\partial}
 \mathrm{H}_{n-1}(E_{a})
 \xrightarrow{\operatorname{Tube}}\mathrm{H}_n(U_{a})
 \xrightarrow{q_{a}}\mathrm{H}_n(U_{a},B_{a}).
\end{equation}
\end{proposition}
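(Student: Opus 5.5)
The plan is to turn the constant functional \(\lambda\) into a horizontal linear functional on the monomial quotient bundle \(\mathcal{Q}\), and then read it off as a flat relative cycle through Proposition~\ref{prop:relativederham}. Fix \(a\) and define \(\Lambda_a\colon S\to\mathbf{C}\) by \(\Lambda_a([c])=\lambda(G_c(a))\), extended linearly.

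The second relation in \eqref{eq:rawindexed} reads
\begin{equation*}
\langle\mu,c\rangle G_c+\sum_i a_i\langle\mu,v_i\rangle G_{c+v_i}=0 .
\end{equation*}
After applying \(\lambda\), this says exactly that \(\Lambda_a\) vanishes on \(D_{\mu,a}[c]\) for every \(\mu\in\widetilde N\). Hence \(\Lambda_a\) descends to a functional on \(Q_a=S/\sum_j D_{\mu_j,a}S\). The first relation, \(\partial_{a_i}G_c=G_{c+v_i}\), says
\begin{equation*}
\partial_{a_i}\bigl(\Lambda_a([c])\bigr)=\Lambda_a(\nabla_{\partial/\partial a_i}[c]),
\end{equation*}
with \(\nabla\) the monomial connection \eqref{eq:monomialGM}. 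Extending this to the Leibniz rule for sections \(\sum_c h_c(a)[c]\) is formal, so \(\Lambda\) is a flat section of the dual bundle \(\mathcal{Q}^\vee\).

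Next I transport \(\Lambda\) along the comparison \(\Theta_a\colon Q_a\xrightarrow{\sim}\mathrm{H}^n(T,Y_a;\mathbf{C})\). By Proposition~\ref{prop:horizontalTheta}, \(\Theta_a\) intertwines \(\nabla\) with the relative Gauss--Manin connection. Therefore \(\Lambda\circ\Theta_a^{-1}\) is a flat section of the dual of the relative cohomology local system. The universal coefficient pairing identifies this dual with \(\mathrm{H}_n(T,Y_a;\mathbf{C})\), compatibly with flatness. This yields a unique locally flat \(\Gamma_\lambda(a)\) with
\begin{equation*}
\Lambda_a([c])=\langle\Theta_a[c],\Gamma_\lambda(a)\rangle .
\end{equation*}
Uniqueness holds because \(\Theta_a\) is an isomorphism and the pairing is perfect. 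Evaluating with the explicit formulas of Proposition~\ref{prop:relativederham}, namely \([0]\mapsto[(-\Omega,0)]\) and \([c]\mapsto\delta_{Y_a}\operatorname{Res}_{Y_a}\omega_c\), together with the stated normalization of \(\delta_{Y_a}\), gives both lines of \eqref{eq:allindexedperiods}.

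For the final assertion, take \(c\) interior. The divisor computation above shows that \(\omega_c\) extends to a meromorphic top form on \(Z\) with poles only along \(E_a\); in particular it has no poles along \(D\). The class \(\partial\Gamma_\lambda(a)\in\mathrm{H}_{n-1}(Y_a)\) pushes forward to \(\mathrm{H}_{n-1}(E_a)\), and the tube-residue formula gives
\begin{equation*}
\int_{\partial\Gamma_\lambda}\operatorname{Res}\omega_c=\frac{1}{2\pi\mathrm{i}}\int_{\operatorname{Tube}(\partial\Gamma_\lambda)}\omega_c .
\end{equation*}
The factor \(1/(2\pi\mathrm{i})\) is absorbed by rescaling over \(\mathbf{C}\). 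It remains to see that the image of \(\partial\Gamma_\lambda\) in \(\mathrm{H}_{n-1}(E_a)\) lies in the image of \(\partial\colon\mathrm{H}_n(Z,E_a)\to\mathrm{H}_{n-1}(E_a)\). I would argue this by compactifying \(\Gamma_\lambda\): its closure in \(Z\) is a chain with boundary on \(E_a\cup D\). Compare the exact sequences of \((T,Y_a)\) and \((Z,E_a)\), or equivalently of \((Z,E_a\cup D)\). The defect of \(\partial\overline{\Gamma}_\lambda\) is supported on \(D\), and its tube pairs to zero against \(\omega_c\) after the quotient map \(q_a\) into \(\mathrm{H}_n(U_a,B_a)\).

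I expect this last step to be the main obstacle: showing that the boundary contributions along \(D\) disappear, so that the cycle genuinely lands in \(W\) rather than merely in \(\mathrm{H}_n(U_a,B_a)\). My first attempt would be a Mayer--Vietoris or Poincar\'e--Lefschetz argument on the normal crossing pair \((E_a,D)\). Failing that, I would use the interior vanishing of \(\omega_c\) along \(D\) to show that \(J_c\) factors through the quotient defining \(W\).
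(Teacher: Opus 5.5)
Your first part is exactly the paper's argument: descending \(\lambda(G_c(a))\) to \(Q_a\) via the Euler relation, getting horizontality from \(\partial_{a_i}G_c=G_{c+v_i}\), transporting through \(\Theta_a\) using Proposition~\ref{prop:horizontalTheta} and the perfect pairing, and reading off both lines of \eqref{eq:allindexedperiods} from Proposition~\ref{prop:relativederham}.

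The gap is in the final assertion. You correctly isolate what must be shown: with \(j\colon Y_a\hookrightarrow E_a\), the class \(j_*\partial\Gamma_\lambda\) must lie in the image of \(\partial\colon \mathrm{H}_n(Z,E_a)\to\mathrm{H}_{n-1}(E_a)\). But you leave this unproved, and your sketched route targets a difficulty that does not exist. A class in \(\mathrm{H}_n(T,Y_a;\mathbf{C})\) is represented by a finite singular chain, which is compact, contained in \(T\), and has boundary in \(Y_a\). So there is no closure to take in \(Z\), and no boundary ``defect'' along \(D\) ever arises. Your fallback of showing that \(J_c\) factors through a quotient would not help either. \(W\) is an image, so membership requires exhibiting a class in \(\mathrm{H}_n(Z,E_a)\) that maps to your tube cycle; a factorization property of \(J_c\) does not produce one.

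The missing idea is just naturality of connecting homomorphisms. The inclusion of pairs \(\iota\colon(T,Y_a)\hookrightarrow(Z,E_a)\) gives \(\beta_\lambda:=j_*\partial\Gamma_\lambda=\partial(\iota_*\Gamma_\lambda)\). Hence \(\beta_\lambda\) lies in the image of \(\partial\) for the pair \((Z,E_a)\).

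For interior \(c\), \(\omega_c\) has no poles along \(D\), which is what makes \(J_c\) defined on \(\mathrm{H}_n(U_a,B_a)\). Also \(\operatorname{Res}_{Y_a}\omega_c\) is the restriction of \(\operatorname{Res}_{E_a}\omega_c\). Therefore
\[
\lambda(G_c(a))=\int_{\beta_\lambda}\operatorname{Res}_{E_a}\omega_c=\frac{1}{2\pi\mathrm{i}}\int_{q_a\operatorname{Tube}(\beta_\lambda)}\omega_c ,
\]
and \(q_a\operatorname{Tube}(\beta_\lambda)\in W\) by the definition of \(W\). The scalar \(1/(2\pi\mathrm{i})\) is harmless because \(W\) is a \(\mathbf{C}\)-subspace. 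This is exactly how the paper concludes.

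You actually wrote ``compare the exact sequences of \((T,Y_a)\) and \((Z,E_a)\)''. That comparison alone finishes the proof, and the detour through \((Z,E_a\cup D)\) is unnecessary.
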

\begin{proof}
The Euler relation shows that
\(\ell_\lambda(a)([c])=\lambda(G_c(a))\) annihilates
\(\sum_\mu D_{\mu,a}S\); it is therefore a functional on \(Q_a\).
The differential relation immediately implies that
\begin{equation}
 \partial_{a_i}\bigl(\ell_\lambda([c])\bigr)
 =\ell_\lambda([c+v_i])
 =\ell_\lambda(\nabla_{\partial/\partial a_i}[c]).
\end{equation}
Thus \(\ell_\lambda\) is horizontal for the dual connection.
Proposition~\ref{prop:horizontalTheta} and the perfect integration pairing
identify it with a unique flat complex relative homology class. The
explicit images under \(\Theta\) in Proposition 
\ref{prop:relativederham} give \eqref{eq:allindexedperiods}.

Let \(\beta_\lambda=j_{\ast}\partial\Gamma_\lambda\in\mathrm H_{n-1}(E_a)\)
where \(j=j_{a}\colon Y_{a}\to E_{a}\).
The inclusion \((T,Y_a)\hookrightarrow(Z,E_a)\) of pairs shows that
\(\beta_\lambda\) is in the image of the boundary map for \((Z,E_a)\). 
Let \(c\in C\) be an interior lattice point.
Then \(\omega_{c}\) extends to the compactification without
introducing new poles other than \(E_{a}\).
Observe that \(\operatorname{Res}_{Y_a}(\omega_{c})\) is the restriction of
\(\operatorname{Res}_{E_a}(\omega_{c})\).
Consequently
\begin{equation}
 \lambda(G_{c}(a))
 =\int_{\beta_\lambda(a)}\operatorname{Res}_{E_a}(\omega_{c})
 =\frac{1}{2\pi\mathrm i}
   \int_{\operatorname{Tube}(\beta_\lambda(a))}\omega_{c}.
\end{equation}
Its relative tube class lies in \(W\); the
nonzero scalar factor does not affect the period subspace.
\end{proof}

\begin{remark}
According to the argument, we see that 
\begin{equation}
  \frac{1}{2\pi\mathrm i}
  \int_{\operatorname{Tube}(\beta_\lambda(a))}\frac{\Omega}{f_a}=
  \int_{\partial\Gamma_\lambda(a)}\operatorname{Res}_{Y_a}\frac{\Omega}{f_{a}}
\end{equation}
and the right hand side does not depend on the compactification.
\end{remark}

\subsection{The orbifold \(\mathcal{B}_{c}^{\mathrm{raw}}\) series and 
the orbifold HLY series}
\label{subsec:sectorreflection}
We now make the support prescription explicit and include the twisted
sectors. 
Let \(\check{\Sigma}\) be the homogenized star fan satisfying the
hypothesis in \S\ref{sec:preliminaries}, and let
\(b\in\operatorname{Box}(\check{\Sigma})\). Write
\begin{equation}
 b=\sum_{i=0}^r\alpha_i(b)v_i,~\mbox{where}~
 0\le\alpha_i(b)<1,
\end{equation}
using the minimal cone containing \(b\). Set
\begin{equation}
 L_{c,b}=\{\ell\mid\ell_{i}+\alpha_{i}(b)\in\mathbf{Z},~A\ell=-c\}.
\end{equation}
Write \(\mathbf{1}_b\) for the sector unit, and multiplication
by the untwisted divisors on each sector 
is understood by its restriction. 

Let \(R=\mathrm{H}_{\mathrm{CR}}^{\bullet}(\check{\mathcal{Z}};\mathbf{C})\) and
\(D_{1},\ldots,D_{p}\) be the divisor classes cooresponding to 
the \(1\)-cones in \(\check{\Sigma}\) and hence also \(1\)-cones
in the projected fan \(\check{\Sigma}_{M_{\mathbf{R}}}\).
In this identification, \(D_{0}=-\sum_{i=1}^{p}D_{i}\).
\begin{definition}
\label{def:raw-b-series}
Define
\begin{equation}
\label{eq:fullrawsectors}
 \mathcal B_c^{\mathrm{raw}}(a)
 :=\sum_{b\in\operatorname{Box}(\check\Sigma)}\sum_{\ell\in L_{c,b}}
 \left(\prod_i\frac{a_i^{\ell_i+D_i}}{\Gamma(1+\ell_i+D_i)}\right)\mathbf{1}_b.
\end{equation}
\end{definition}
A summand is zero unless the support
of \(b\) and all rays with negative integral \(\ell_i\) lie in a
common cone. It converges in the chosen fan chamber, as in
\cite[Theorems~4.6 and~4.8]{2013-Horja-toric-deligne-mumford-stacks-and-the-better-behaved-version-of-the-gkz-hypergeometric-system}.

Let
\begin{equation}
    L:=\operatorname{Ker}(A)\cap\mathbf{Z}^{p+1}.
\end{equation}
Whenever \(L_{c,b}\neq\emptyset\), we choose
\(\gamma_{c,b}\in L_{c,b}\). Then
\begin{equation}
    L_{c,b}=\gamma_{c,b}+L.
\end{equation}
Thus we have
\begin{equation}
\label{eq:raw-indexed-shifted}
  \mathcal B_c^{\mathrm{raw}}(a)=
  \sum_{\substack{b\in\operatorname{Box}(\check\Sigma)\\
  L_{c,b}\neq\varnothing}}
  a^{\gamma_{c,b}}
  \sum_{l\in L}
  \left(
    \prod_{i=0}^p
    \frac{a_i^{\,l_i+D_i}}
         {\Gamma(1+l_i+\gamma_{c,b,i}+D_i)}
  \right)\mathbf 1_b.
\end{equation}
In particular, for \(c=\delta=v_0\), we have
\begin{equation}
    L_{\delta,b}=(-1,0,\ldots,0)+L_{0,b}.
\end{equation}
Consequently,
\begin{equation}\label{eq:raw-delta-shifted}
  \mathcal B_\delta^{\mathrm{raw}}(a)
  =
  \frac{1}{a_0}
  \sum_{b\in\operatorname{Box}(\check\Sigma)}
  \sum_{\substack{l\in L_{0,b}\\l_0\leq0}}
  \frac{a^{l+D}}
       {\Gamma(l_0+D_0)
        \prod_{i>0}\Gamma(1+l_i+D_i)}
  \mathbf 1_b,
\end{equation}

\begin{lemma}\label{lem:centralbox}
For the reflexive Gorenstein star fan as in \S\ref{sec:preliminaries},
\(\alpha_0(b)=0\) for every box element. Moreover, for \(c\in C\)
in the interior and \(\ell\in L_{c,b}\) for which
\begin{equation}
  \prod_{i}\frac{a_i^{\ell_i+D_i}}{\Gamma(1+\ell_i+D_i)}\mathbf{1}_b
\end{equation}
is nonzero, we have
\(\ell_0\le -1\).
\end{lemma}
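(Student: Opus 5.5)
The first claim follows from the geometry of the star fan; the second from combining the relation \(A\ell=-c\) with the support condition that makes a summand nonzero.

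\textbf{First claim.} A box element \(b\) lies in the relative interior of its minimal cone \(\sigma\), and \(b=\sum_{v_i\in\sigma}\alpha_i(b)v_i\) with \(0\le\alpha_i(b)<1\). Every maximal cone of \(\check\Sigma\) contains \(v_0=(\mathbf 0,1)\), so \(\sigma\) is either a cone over a simplex of the star triangulation containing \(\mathbf 0\), or a cone over a boundary simplex. Look at the height coordinate: \(b=(m,k)\) with \(k=\sum_i\alpha_i(b)\in\mathbf Z\), and \(h_\Delta(m)\in\mathbf Z\) by reflexivity. On a cone \(\sigma\ni v_0\) we have \(h_\Delta\bigl(\sum_i\alpha_i m_i\bigr)=\sum_{i\ne0}\alpha_i\), since the \(m_i\) with \(i\neq0\) lie on \(\partial\Delta\) and \(h_\Delta\) is linear on the cone. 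Hence
\begin{equation}
  \alpha_0(b)=k-h_\Delta(m)\in\mathbf Z\cap[0,1)=\{0\}.
\end{equation}
If \(\sigma\not\ni v_0\), then \(\alpha_0(b)=0\) trivially. This is the Gorenstein property: the height minus \(h_\Delta\) is the integral "\(u_0\)-exponent" of the isomorphism \(S_{\check\Sigma}\cong\mathbf C[u_0]\otimes\mathbf C[M]^{\check\Sigma}\) of \S\ref{sec:the_toric_fan_endpoint}.

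\textbf{Second claim.} Take \(\ell\in L_{c,b}\) with nonzero summand. By the first part, \(\ell_0\in\mathbf Z\). The nonvanishing rule stated after Definition~\ref{def:raw-b-series} says that the support of \(b\) together with all rays having negative integral \(\ell_i\) lie in a common cone \(\tau\). Otherwise the reciprocal Gamma factors produce a product of divisors \(D_i\) over a non-cone, which vanishes on the sector by the Stanley--Reisner relations. Now write \(c=-A\ell=-\sum_i\ell_iv_i\), and split the sum into the indices in \(\tau\) and those outside.
\begin{itemize}
\item For \(i\notin\tau\), either \(\ell_i\ge0\), or \(i\notin\operatorname{supp}b\); in the latter case \(\ell_i\in\mathbf Z\) and \(\ell_i\ge0\) by the choice of \(\tau\). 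Either way, \(\ell_i\ge0\) for all such \(i\); in fact \(\ell_i\) is fractional only on \(\operatorname{supp}b\subseteq\tau\), so all \(i\notin\tau\) carry nonnegative integral \(\ell_i\).
\item Apply the convex piecewise-linear height function \(h(m,k)=k-h_\Delta(m)\). It is concave in the sense \(h(x+y)\ge h(x)+h(y)\), vanishes on every boundary ray \(v_\rho\), and equals \(1\) on \(v_0\).
\end{itemize}
Rewrite the relation as
\begin{equation}
  c+\sum_{i\notin\tau}\ell_iv_i=-\sum_{i\in\tau}\ell_iv_i .
\end{equation}
The right side lies in the linear span of \(\tau\), where \(h\) is linear. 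Evaluating \(h\) there gives \(-\ell_0\) if \(v_0\in\tau\), and \(0\) otherwise. On the left side, superadditivity gives \(h(\mathrm{LHS})\ge h(c)+\sum_{i\notin\tau}\ell_ih(v_i)\ge h(c)\ge1\). The last inequality holds because \(c\) is interior: \(k>h_\Delta(m)\), and both are integers.

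\textbf{Main obstacle.} The delicate step is making this evaluation legitimate. The left side must actually lie in \(C\); if it did not, \(h\) would be evaluated outside its domain and the comparison with the right side would break down. I would handle this by noting that the left side equals \(-\sum_{i\in\tau}\ell_iv_i\). Its coefficients on the rays with \(\ell_i<0\) are positive. If some coefficient on a ray of \(\tau\) is negative, I would instead compare the linear extension \(\psi_\tau\) of \(h|_\tau\) with \(h\). Concavity gives \(\psi_\tau\ge h\) on \(C\), so
\begin{equation}
  -\ell_0[v_0\in\tau]=\psi_\tau(\mathrm{LHS})\ge h(\mathrm{LHS})\ge1 .
\end{equation}
This forces \(v_0\in\tau\) and \(\ell_0\le-1\).
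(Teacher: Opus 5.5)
Your proof is correct and is essentially the paper's argument: both parts evaluate the height functional \(k-u(m)\) of a facet containing the relevant cone on \(b\), respectively on \(A\ell=-c\). That functional equals \(1\) on \(\delta\), vanishes on the boundary rays of the cone, is nonnegative on every \(v_i\), and is positive on interior \(c\); your \(h_\Delta\) agrees with it on the cone, and your linear extension \(\psi_\tau\) is exactly it. Using \(\psi_\tau\) from the start (it is globally linear) removes the superadditivity detour and your ``main obstacle'': the left side is visibly in \(C\) as a nonnegative combination of \(c\) and the \(v_i\), and the only genuine issue, that positive fractional \(\ell_i\) on \(\operatorname{supp} b\) can push the right side out of \(\tau\), is harmless because those \(v_i\) are killed by the facet functional.
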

\begin{proof}
Choose a maximal boundary simplex containing the boundary rays in the
support of \(b\). They lie in a facet of the reflexive polytope. There
is an integral linear form \(u\) on \(M\) taking value one on this
facet and at most one on \(\Delta\). The integral functional
\(h(m,k)=k-u(m)\) has value one on \(\delta\) and zero on the boundary
rays of this cone. Hence \(h(b)=\alpha_0(b)\) is integral, forcing
\(\alpha_0(b)=0\).

Now if \(c\in C\)
is an interior lattice point, then \(h(c)>0\), while
\begin{equation}
  -h(c)=h(A\ell)=\ell_0+\sum_{i>0}\ell_i h(v_i)\ge\ell_0.
\end{equation}
The inequality holds because the potentially negative boundary exponents lie where \(h(v_i)=0\). 
Consequently \(\ell_0<0\), hence \(\ell_0\le-1\).
\end{proof}

\begin{definition}
\label{def:HLZ-b-series}
For an interior lattice point \(c\in C\), define
\begin{equation}
\label{eq:sector-normalized}
  \mathcal B_c(a)=
  \sum_{b\in\operatorname{Box}(\check\Sigma)}
  \sum_{\substack{\ell\in L_{c,b}\\ \ell_0\leq-1}}
  \frac{(-1)^{\ell_0+1}\Gamma(-\ell_0-D_0)}{\prod_{i>0}\Gamma(1+\ell_i+D_i)}
  a^{\ell+D}\mathbf{1}_b.
\end{equation}
\end{definition}
Since \(D_0=-e\), the reflection formula gives
\begin{equation}
\label{eq:orbifold-ref}
    \mathcal B_c^{\mathrm{raw}}(a)=-e\cup\frac{\sin(\pi e)}{\pi e}\cup\mathcal B_c(a).
\end{equation}
For \(c=\delta\), using
\(L_{\delta,b}=(-1,0,\ldots,0)+L_{0,b}\), we obtain
\begin{equation}
  \mathcal B_\delta(a)=\frac{1}{a_0}
  \sum_{b\in\operatorname{Box}(\check\Sigma)}
  \sum_{\substack{l\in L_{0,b}\\l_0\leq0}}
  \frac{(-1)^{l_0}\Gamma(1-l_0-D_0)}{\prod_{i>0}\Gamma(1+l_i+D_i)}
  a^{l+D}\mathbf 1_b,
\end{equation}
This is the extension of the HLY regularized
series to orbifolds.
Its coefficients are regular since \(\ell_0\le0\). The
same Gamma estimates as for the fan series, after removing the fixed
central zero, give convergence on a sufficiently small chamber domain.
Equivalently, one may first use formal chamber expansions and then
their convergent analytic representatives. Finite derivatives in the
nilpotent variables change the estimates only by polynomial and
logarithmic factors.

Termwise reflection, now valid on each sector with integral central
exponent, and Lemma~\ref{lem:centralbox} give
\begin{equation}\label{eq:orbifoldreflection}
 \mathcal B_\delta^{\mathrm{raw}}
 =-e\cup\frac{\sin(\pi e)}{\pi e}\cup\mathcal B_\delta.
\end{equation}
There is no fractional sine correction in the central factor: its
fractional part is zero. The class \(e\) is untwisted and Cartier and
acts on every sector by its restriction. The factor
\(\sin(\pi e)/(\pi e)\) is invertible on the full cohomology space.

\begin{remark}
The support prescription \(\ell_{0}\le 0\)
in \(\mathcal{B}_{\delta}\)
is essential. For \(\check Z=\mathbf{P}^2\),
the relation \(\ell=d(-3,1,1,1)\) at \(d=-1\) would contribute
\begin{equation}
 -\frac{\Gamma(-2+3H)}{\Gamma(H)^3}
 =-\frac{H^2}{6}+O(H^3)
\end{equation}
if negative degrees were admitted after cancellation. That term is
excluded by \(\ell_0\le0\). Its raw counterpart vanishes, so raw
reflection alone cannot specify the component in \(\operatorname{Ann}(e)\).
Our theorem concerns \(e\cup\mathcal B_\delta\) and is unchanged by
such an ambiguity, but the series itself requires
the stated convention.
\end{remark}

\subsection{Completeness for constant functionals}
\label{subsec:sparsecomplete}
Now we deal with the general situation; namely 
the columns span a proper finite-index sublattice.
We first remark that the raw family uses all indices in
\(C\cap\widetilde M\), not just the monoid generated by the columns.
The convergence and differential identities of
\eqref{eq:fullrawsectors} use Gamma recurrence, fan support, and the
estimates of Gamma functions in a chamber. These arguments are sectorwise
and do not require integral generation by the columns. In particular,
the allowed exponent sets remain finite unions of translates of the
same relation lattice. The finite-index issue concerns completeness,
which we shall verify directly below,
which is essentially due to Horja 
\cite{2013-Horja-toric-deligne-mumford-stacks-and-the-better-behaved-version-of-the-gkz-hypergeometric-system}*{Proposition 4.7 and Theorem 4.8}. 

\begin{lemma}\label{lem:support-complete}
The coefficient functionals of the full raw family give all horizontal
functionals on \(Q\).
\end{lemma}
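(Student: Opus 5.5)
Write \(R=\mathrm{H}^{\bullet}_{\mathrm{CR}}(\check{\mathcal Z};\mathbf{C})\). For a constant functional \(\lambda\colon R\to\mathbf{C}\), set \(\ell_\lambda([c])=\lambda(\mathcal B^{\mathrm{raw}}_c(a))\). By \eqref{eq:rawindexed}, extended sectorwise to \eqref{eq:fullrawsectors}, and by the proof of Proposition~\ref{prop:raw-period-identification}, each \(\ell_\lambda\) is a horizontal functional on the bundle \(\mathcal Q\) with fibers \(Q_a\). This gives a linear map
\begin{equation}
 \Lambda\colon R^{\vee}\longrightarrow \{\text{horizontal sections of }\mathcal Q^{\vee}\},\qquad \lambda\mapsto \ell_\lambda .
\end{equation}
The target has dimension \(\operatorname{rank}\mathcal Q=\dim \mathrm{H}^n(T,Y_a;\mathbf{C})\), by Proposition~\ref{prop:relativederham} and Proposition~\ref{prop:horizontalTheta}. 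By Theorem~\ref{thm:comparison}(2), or directly from \eqref{eq:rankobstruction}, this equals \(\operatorname{Vol}_M(\Delta)=\dim R\). So the lemma reduces to showing that \(\Lambda\) is injective: if \(\lambda(\mathcal B^{\mathrm{raw}}_c(a))=0\) for every \(c\in C\cap\widetilde M\), then \(\lambda=0\).

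To prove injectivity, I will expand at the large complex structure limit and read off leading terms sector by sector. Fix \(b\in\operatorname{Box}(\check\Sigma)\) and take \(c=b\) itself. In \eqref{eq:fullrawsectors} there is a term with \(\ell=-\alpha(b)\), where \(\alpha(b)=(\alpha_i(b))\). It lies in \(L_{b,b}\) and has no negative integral entries, so it survives the support condition. Its contribution is
\begin{equation}
 \prod_i \frac{a_i^{-\alpha_i(b)+D_i}}{\Gamma(1-\alpha_i(b)+D_i)}\,\mathbf 1_b .
\end{equation}
On the sector \(b\), the classes \(D_i\) with \(\alpha_i(b)\neq 0\) restrict to zero. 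Hence, up to the invertible factor \(\prod_i\Gamma(1-\alpha_i(b))^{-1}a_i^{-\alpha_i(b)}\), this term is \(a^{D}\,(1+\text{nilpotent})\,\mathbf 1_b\).

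Every other \(\ell\in L_{c,b'}\) contributes either a monomial \(a^{\ell}\) with a different fractional exponent, which happens exactly when \(b'\neq b\), or a monomial whose exponent is shifted by a nonzero element of \(L\) lying in the Mori cone for the chamber. The latter are strictly subleading near the LCSL. Consequently the leading part of \(\lambda(\mathcal B^{\mathrm{raw}}_{b+d}(a))\), with \(d\) ranging over the untwisted lattice points of the minimal cone of \(b\), recovers \(\lambda\bigl(a^{D}\,x\,\mathbf 1_b\bigr)\) for \(x\) running over the monomials in the \(D_i\) of that cone. Indeed, shifting \(c\) by \(v_i\) corresponds to applying \(\partial_{a_i}\), which inserts the factor \((\ell_i+D_i)/a_i\).

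Next I expand \(a^{D}=\exp\bigl(\sum D_i\log a_i\bigr)\) and use the linear independence of the functions \((\log a)^{k}\) together with distinct fractional monomials. This shows that \(\lambda\) vanishes on \(x\,\mathbf 1_b\) for all ray monomials \(x\) supported in cones containing \(b\). By the Borisov--Chen--Smith presentation \eqref{eq:toric}, these classes span the \(b\)-sector of \(R\). Hence \(\lambda=0\), so \(\Lambda\) is injective and therefore bijective by the dimension count.

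An alternative is to cite Horja \cite{2013-Horja-toric-deligne-mumford-stacks-and-the-better-behaved-version-of-the-gkz-hypergeometric-system}*{Proposition 4.7 and Theorem 4.8}, which proves that the bbGKZ solution space has dimension \(\operatorname{Vol}(\Delta)\) and is spanned by the \(\lambda(\mathcal B^{\mathrm{raw}})\). This yields injectivity directly and does not need the lattice-generation hypothesis.

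The main obstacle is the leading-term separation. One has to show that the cancellations forced by the Gamma zeros and the support prescription do not kill the candidate leading coefficient, and that the index shifts \(c\mapsto c+v_i\) reach enough classes in each sector. When the columns generate only a finite-index sublattice, distinct box elements can share the same fractional exponent class modulo \(L\), so exponents alone do not separate sectors. In that case I would separate them instead by the index \(c\), using the fact that \(c\) ranges over all of \(C\cap\widetilde M\) rather than only the monoid generated by the columns.
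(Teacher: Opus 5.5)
Your proposal is correct and follows the paper's proof: you reduce to injectivity of \(\lambda\mapsto(\lambda(\mathcal B^{\mathrm{raw}}_c))_c\) using \(\dim Q=\dim R\), then for each box element \(b\) you isolate the \(\ell=-\alpha(b)\) term of \(\mathcal B^{\mathrm{raw}}_b\), expand \(a^{D}=\exp(\sum_i D_i\log a_i)\) in the independent variables \(\log a_i\), and conclude from the invertibility of \(g_b(D)=\prod_i\Gamma(1-\alpha_i(b)+D_i)^{-1}\) and the generation of the \(b\)-sector by \(\mathbf 1_b\). Two side remarks: first, your claim that the \(D_i\) with \(\alpha_i(b)\neq0\) restrict to zero on the sector is false in general (it fails on the one-dimensional twisted sector of \(\mathbf{P}(1,1,2,2)\)), though nothing depends on it since \(g_b(D)\) is invertible anyway; second, the LCSL leading-term estimate, the index shifts \(c\mapsto c+v_i\), and the finite-index worry in your last paragraph are unnecessary, because Lemma~\ref{lem:zeroterms} separates distinct exponent vectors \(\ell\in\mathbf{Q}^{p+1}\) outright and the componentwise fractional part \(\{\ell_i\}=\{-\alpha_i(b')\}\) already determines the sector \(b'\), so the single index \(c=b\) suffices.
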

\begin{proof}
Let \(\lambda\colon R\to \mathbf{C}\) be a constant functional.
Suppose \(\lambda(\mathcal B_c^{\mathrm{raw}})=0\) for every index
\(c\). For a box element \(b=\sum_i\alpha_i(b) v_i\) as before, we choose
\begin{equation}
 \ell_{i}=-\alpha_{i}(b),~\mbox{for}~1\le i\le p,~\mbox{and}~\ell_{0}=0.
\end{equation}
This belongs to \(L_{b,b}\), and every \(1+\ell_i\) is positive.
Indeed, for \(\ell=(\ell_{i})\)
\begin{equation}
  A\ell = -A\alpha = -b.
\end{equation}
For this \(\ell\), the corresponding coefficient is
\begin{equation}
 a^\ell\exp\left(\sum_iD_i\log a_i\right)
 g_b(D)\mathbf{1}_b,~\mbox{where}~
 g_b(D)=\prod_i\Gamma(1+\ell_i+D_i)^{-1}.
\end{equation}
The class \(g_b(D)\) is invertible on the sector. 
Now \(\lambda(\mathcal B_c^{\mathrm{raw}})=0\) and 
Lemma \ref{lem:zeroterms} below imply
\begin{equation}
 \lambda\left(g_b(D)\mathbf{1}_b
       \exp\left(\sum_iD_i\log a_i\right)\right)=0.
\end{equation}
The assumption on the convergence of
the series in Lemma \ref{lem:zeroterms} holds 
(cf.~\cite{2006-Borisov-Horja-mellin-barnes-integrals-as-fourier-mukai-transforms}).
It follows that \(\lambda\) must annihilate
\(g_b(D)\mathbf{1}_b\prod_iD_i^{u_i}\) for every multi-index \(u\).
The sector is generated as a module over the divisor algebra by
\(\mathbf{1}_b\); invertibility of \(g_b\) therefore shows that
\(\lambda\) vanishes on this sector. Applying this to every box element
proves the injectivity of the map from \(R^\vee\) to indexed solutions.

The indexed equations identify each such solution with a horizontal
functional on \(Q\), as in Proposition~\ref{prop:raw-period-identification}.
The relative comparison and the rank calculation of
Theorem~\ref{thm:comparison} give \(\dim Q=\dim R\).
Thus it is an isomorphism. 
\end{proof}

Recall that for \(a=(a_{0},\ldots,a_{p})\in\mathbf{C}^{p+1}\), a polyannulus region is
a region in \(\mathbf{C}^{p+1}\) of the form
\begin{equation}
  A:=\{a\in\mathbf{C}^{p+1}\mid r_{i}< |a_{i}| <R_{i}~\mbox{for every}~i\}~\mbox{with}~0<r_{i}<R_{i}.
\end{equation}
The logarithmic covering of \(A\) is defined as the region of the form
\begin{equation}
  \{w\in\mathbf{C}^{p+1}\mid \log r_{i}<\operatorname{Re}(w_{i})<\log R_{i}~\mbox{for every}~i\}
\end{equation}
with a projection to \(A\) via \(w\mapsto e^{w}\).

\begin{lemma}
\label{lem:zeroterms}
Let \(a=(a_{0},\ldots,a_{p})\). Let
\begin{equation}
  F(a) := \sum_{\alpha\in \Lambda} a^{\alpha} P_{\alpha}(\log a)
\end{equation}
where \(P_{\alpha}\) is a polynomial of degree \(\deg P_{\alpha}\le K\) and 
\(\Lambda\) is a sublattice in \(N^{-1}\mathbf{Z}^{p+1}\) for some \(N\ge K>0\).
Suppose that \(F(a)\) is convergent on every compact subset in the logarithmic covering
over a polyannulus region. If \(F(a)=0\), then \(P_{\alpha}(\log a)=0\) for every \(\alpha\in \Lambda\).
\end{lemma}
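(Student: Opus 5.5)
The plan is to pass to the logarithmic covering and treat \(F\) as a function of \(w=\log a\) on a tube domain. Writing \(a=e^{w}\), each term becomes \(e^{\langle\alpha,w\rangle}P_\alpha(w)\), and \(F\) becomes a holomorphic function \(\widetilde F(w)\) on
\begin{equation}
 \mathcal T=\{w\mid \log r_i<\operatorname{Re}(w_i)<\log R_i\}.
\end{equation}
The series converges locally uniformly on \(\mathcal T\) by hypothesis. Put \(w=u+\mathrm{i}\,y\), with \(u\) fixed in the real box and \(y\in\mathbf{R}^{p+1}\). The factors \(e^{\mathrm{i}\langle\alpha,y\rangle}\) are characters of \(y\), and they are periodic under \(y\mapsto y+2\pi N\,k\) for \(k\in\mathbf{Z}^{p+1}\) because \(\alpha\in N^{-1}\mathbf{Z}^{p+1}\). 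The polynomial factors \(P_\alpha(u+\mathrm{i}y)\) are what prevent a direct Fourier argument. We therefore want to strip off the polynomial dependence in \(y\) first.

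The first key step is a difference-operator trick. For \(k\in\mathbf{Z}^{p+1}\), let \(\tau_k\) be translation \(y\mapsto y+2\pi N k\). This changes every \(e^{\langle\alpha,w\rangle}\) by the factor \(e^{2\pi\mathrm{i}N\langle\alpha,k\rangle}=1\), and changes \(P_\alpha(w)\) to \(P_\alpha(w+2\pi\mathrm{i}Nk)\). Hence \(\widetilde F\circ\tau_k=0\) for all \(k\), and also
\begin{equation}
 \sum_{\alpha}e^{\langle\alpha,w\rangle}P_\alpha(w+2\pi\mathrm{i}Nk)=0 .
\end{equation}
The left side is a polynomial in \(k\) of degree at most \(K\), with coefficients that are holomorphic in \(w\); these coefficients are series of the same shape, built from derivatives of the \(P_\alpha\). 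Convergence of these derived series follows from the locally uniform convergence of \(\widetilde F\): each derived series is expressed through finite differences of translates of \(\widetilde F\), or via Cauchy estimates. A polynomial in \(k\in\mathbf{Z}^{p+1}\) vanishing identically has all coefficients zero. Taking the top-order coefficients gives
\begin{equation}
 \sum_\alpha e^{\langle\alpha,w\rangle}\bigl(\partial^\beta P_\alpha\bigr)(w)=0 .
\end{equation}
Downward induction on \(|\beta|\), starting from \(|\beta|=\deg P_\alpha\le K\), reduces everything to the constant-coefficient case. At each stage the series \(\sum_\alpha e^{\langle\alpha,w\rangle}c_\alpha\) vanishes for the degree-\(|\beta|\) parts.

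The second step is the constant-coefficient case, a genuine Fourier uniqueness statement. Suppose \(\sum_\alpha c_\alpha e^{\langle\alpha,w\rangle}=0\) with locally uniform convergence. Fix \(u\) and integrate against \(e^{-\mathrm{i}\langle\alpha_0,y\rangle}\) over the torus \([0,2\pi N]^{p+1}\). This is legitimate because uniform convergence on the compact set \(\{u\}\times\)torus lets us interchange sum and integral. The result is
\begin{equation}
 c_{\alpha_0}e^{\langle\alpha_0,u\rangle}(2\pi N)^{p+1}=0,
\end{equation}
so \(c_{\alpha_0}=0\). Feeding this back up the induction shows that every homogeneous part of every \(P_\alpha\) vanishes, which gives \(P_\alpha\equiv0\).

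The expected main obstacle is justifying convergence of the derived series in the first step, since only convergence of \(F\) itself is assumed. To handle it, we will avoid derived series altogether and run the Fourier argument directly with polynomial weights. Integrate \(\widetilde F\) over a large box \(y\in[0,2\pi N L]^{p+1}\) against \(e^{-\mathrm{i}\langle\alpha_0,y\rangle}\), divide by \(L^{p+1+d}\), and let \(L\to\infty\). Orthogonality kills all \(\alpha\neq\alpha_0\) exactly, because the box consists of whole periods. Only the leading homogeneous part of \(P_{\alpha_0}\), paired with a suitable polynomial test weight, survives in the limit. Uniform convergence on each compact box justifies the termwise integration. Then we peel off degrees one at a time.
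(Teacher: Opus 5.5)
The large-box argument you finally commit to has a genuine gap. It is also unnecessary. Your Steps 1 and 2, together with the finite-difference justification you mention in passing, already give a correct proof, and it is essentially the paper's.

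The paper likewise uses that translation by $2\pi\mathrm{i}Nk$ fixes every $e^{\langle\alpha,w\rangle}$. It then applies Lagrange interpolation on the grid $\{0,\ldots,N\}^{p+1}$ to write $P_\alpha(y)$, for an arbitrary point $y$, as a finite combination of the values $P_\alpha(w+2\pi\mathrm{i}Nk)$ with coefficients independent of $\alpha$. This yields $\sum_\alpha e^{\langle\alpha,w\rangle}P_\alpha(y)=0$ with $y$ frozen, and one Fourier extraction over a single period torus finishes.

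You instead extract the $k^\beta$-coefficients, i.e.\ the series $\sum_\alpha e^{\langle\alpha,w\rangle}\partial^\beta P_\alpha(w)$, and induct downward on $|\beta|$. This is equivalent. The convergence of these derived series is not an obstacle. Let $\widetilde F_M$ denote a finite partial sum. Then $\sum_{\alpha}e^{\langle\alpha,w\rangle}\partial^\beta P_\alpha(w)$ over the same finite set equals $\sum_{j\in\{0,\ldots,K\}^{p+1}}c_{\beta,j}\,\widetilde F_M(w+2\pi\mathrm{i}Nj)$, with constants $c_{\beta,j}$ independent of $\alpha$, $M$ and $w$. Hence each derived series converges locally uniformly, with sum $0$. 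This is precisely the paper's interpolation step. Interpolating on $\{0,\ldots,K\}^{p+1}$ also shows that the hypothesis $N\ge K$ is only a convenience.

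Your alternative ``via Cauchy estimates'' is not needed and, as stated, does not work. Differentiating $\widetilde F$ produces the $\alpha$-dependent factors $\alpha^\gamma$, so it does not isolate $\partial^\beta P_\alpha$.

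The gap in the large-box argument is the claim that orthogonality over whole periods kills every $\alpha\neq\alpha_0$ exactly. This is false once the polynomial factor $P_\alpha(u+\mathrm{i}y)$ is present. Already for $p=0$, $N=1$, $\alpha-\alpha_0=1$ and $P_\alpha(w)=w$, one has $\int_0^{2\pi L}e^{\mathrm{i}y}(u+\mathrm{i}y)\,\mathrm{d}y=2\pi L\neq0$.

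The cross terms are only of lower order in $L$, so each tends to $0$ after dividing by $L^{p+1+d}$. But concluding that their infinite sum also tends to $0$ means interchanging $L\to\infty$ with the sum over $\alpha$. That needs a bound uniform in $L$, essentially $\sum_\alpha e^{\langle\alpha,u\rangle}\|P_\alpha\|<\infty$ for some norm on coefficients. Uniform convergence on each fixed box does not supply this, since the boxes grow with $L$, and the hypothesis does not directly give absolute summability of the coefficients. The natural way to obtain such a bound is through the interpolation identity above, at which point the box argument is redundant.
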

\begin{proof}
First make a change of variable to the logarithmic covering. Let 
\(a_{j}=e^{w_{j}}\). Then we can write \(F(a)\) as 
\begin{equation}
\label{eq:log-poly}
  F(e^{w})=\sum_{\alpha\in\Lambda} e^{\langle \alpha,w\rangle} P_{\alpha}(w).
\end{equation}
For any vector \(\tau\in\mathbf{Z}^{p+1}\), we have
\begin{equation}
  F(e^{w+2N\pi\mathsf{i}\,\tau})=\sum_{\alpha\in\Lambda} 
  e^{\langle \alpha,w\rangle} P_{\alpha}(w+2N\pi\mathsf{i}\,\tau).
\end{equation}
Take \(I=\{0,\ldots,N\}^{p+1}\). For each element \(k\in I\) we define
\begin{equation}
  L_{k}(s):=\prod_{i=0}^{p} \prod_{j\ne k_{i}} \frac{s_{i}-j}{k_{i}-j},\quad s=(s_{0},\ldots,s_{p}).
\end{equation}
Then \(L_{k}(s)=\delta_{ks}=\prod_{i=0}^{p} \delta_{k_{i}s_{i}}\) for \(s,k\in I\).
In other words,
\(L_{k}(s)\) is a charactersitic function supported at \(k\in I\). Now we have
\begin{equation}
  P_{\alpha}(y)=\sum_{k\in I} L_{k}\left(\frac{y-w}{2\pi N\mathsf{i}}\right)P_{\alpha}(w+2N\pi\mathsf{i}\, k).
\end{equation}
Thus we obtain a closed formula for \(P_{\alpha}(y)\).
Now consider
\begin{equation}
\begin{aligned}
  &\sum_{\alpha\in\Lambda} e^{\langle \alpha,w\rangle} P_{\alpha}(y)\\
  &=
  \sum_{\alpha\in\Lambda}\sum_{k\in I} L_{k}\left(\frac{y-w}{2\pi N\mathsf{i}}\right)
  e^{\langle \alpha,w\rangle}P_{\alpha}(w+2N\pi\mathsf{i}\, k)\\
  &=\sum_{k\in I} L_{k}\left(\frac{y-w}{2\pi N\mathsf{i}}\right)\sum_{\alpha\in\Lambda}
  e^{\langle \alpha,w+2N\pi\mathsf{i}\,k\rangle}P_{\alpha}(w+2N\pi\mathsf{i}\, k)\\
  &=0
\end{aligned}
\end{equation}
owing to the hypothesis \(F(a)=0\). This is 
true for any \(w\) as long as \(w\) stays in the
polyannulus region. Let \(w=b+\mathsf{i}\,\theta\).
Then 
\begin{equation}
\begin{aligned}
  0&=\frac{1}{(2\pi N)^{p+1}}\int_{[0,2\pi N]^{p+1}}
  \left(\sum_{\eta\in\Lambda} e^{\langle \eta,w\rangle} 
  P_{\eta}(y)\right)e^{-\mathsf{i}\langle\alpha,\theta\rangle}\mathrm{d}\theta\\
  &=e^{\langle \alpha,b\rangle}P_{\alpha}(y).
\end{aligned}
\end{equation}
This proves that \(P_{\alpha}(y)=0\) as desired.
\end{proof}

\begin{remark}
In the proof of Lemma \ref{lem:support-complete}, we only use
\(\dim R=\dim Q\) in Theorem \ref{thm:comparison}.
We can give a rather direct argument by considering the filtered complex.
\end{remark}

\begin{remark}
Notice that this argument
does not assert completeness of the \emph{single} zero-index component
for an ordinary GKZ system defined using a smaller generated lattice.
\end{remark}

\subsection{The reverse period inclusion}
\label{subsec:reverseperiod}
First let \(d=n-1\ge1\), and work on a sufficiently small simply connected
parameter domain on which the smooth projective pair \((E_a,E_a\cap D)\)
is locally trivial. The variations of mixed Hodge structures
and their morphisms below have locally constant ranks. 
Recall that 
\begin{equation}
  j_{a}\colon Y_{a}\to E_{a}~\mbox{and}~\delta_{Y_{a}}\colon \mathrm{H}^{d}(Y_{a};\mathbf{C})
  \to \mathrm{H}^{d+1}(T,Y_{a};\mathbf{C})
\end{equation}
is the connecting homomorphism.
Write
\begin{equation}
 \kappa_a=\delta_{Y_{a}}j_{a}^{\ast}\colon
 \mathrm{H}^d(E_a;\mathbf{C})\rightarrow
 \mathrm{H}^{d+1}(T,Y_a;\mathbf{C}).
\end{equation}

\begin{lemma}
\label{lem:compact-period-factor}
Let \(c\in C\) be an interior lattice point.
Every local period of the form \(\eta_{c}:=\operatorname{Res}_{E_{a}}(\omega_{c})\) on \(E_a\)
is obtained by evaluating \(\kappa_{a}([\eta_{c}])\) against a flat
functional on \(\mathrm H^{n}(T,Y_a)\). 
\end{lemma}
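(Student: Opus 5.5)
The lemma asks: given a flat family of cycles \(\gamma_a\in\mathrm{H}_d(E_a;\mathbf{C})\), find a flat functional on \(\mathrm{H}^n(T,Y_a)\) whose value on \(\kappa_a([\eta_c])\) equals \(\int_{\gamma_a}\eta_c\). The plan is to pass to the dual side, where \(\kappa_a\) becomes a map on homology.

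\textbf{Dual map.} The dual of \(\kappa_a=\delta_{Y_a}j_a^\ast\) is
\[
\kappa_a^\vee=j_{a\ast}\circ\partial\colon \mathrm{H}_{d+1}(T,Y_a;\mathbf{C})\to\mathrm{H}_d(Y_a;\mathbf{C})\to\mathrm{H}_d(E_a;\mathbf{C}).
\]
It commutes with parallel transport, since both \(\partial\) and \(j_{a\ast}\) are topological maps that vary in a locally trivial family. So the whole statement reduces to a linear-algebra claim: every flat cycle \(\gamma_a\) on which some class \([\eta_c]\) pairs nontrivially can be replaced by a cycle in the image of \(\kappa_a^\vee\) without changing the period \(\int_{\gamma_a}\eta_c\). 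More precisely, I would show that \(\gamma_a\) agrees with an element of \(\operatorname{Im}\kappa_a^\vee\) modulo the annihilator of the span of the classes \([\eta_c]\).

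\textbf{Step 1: the classes \([\eta_c]\) lie in a good subspace.} For interior \(c\), the form \(\omega_c\) extends to \(Z\) with poles only along \(E_a\), as shown in the discussion before Proposition~\ref{prop:raw-period-identification}. Hence \([\eta_c]=\operatorname{Res}_{E_a}[\omega_c]\) lies in the image of the residue map \(\mathrm{H}^{d+1}(Z\setminus E_a)\to\mathrm{H}^d(E_a)\). By the Gysin sequence, this image is the kernel of the Gysin map \(\mathrm{H}^d(E_a)\to\mathrm{H}^{d+2}(Z)\). Dually, this kernel is the annihilator of
\[
K:=\operatorname{Im}\bigl(\mathrm{H}_{d+2}(Z)\to\mathrm{H}_d(E_a)\bigr),
\]
the cap-intersection with \(E_a\). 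That image in turn equals the image of the boundary map \(\partial\colon\mathrm{H}_{d+1}(Z,E_a)\to\mathrm{H}_d(E_a)\). So it suffices to show
\[
\partial\,\mathrm{H}_{d+1}(Z,E_a)\subseteq \operatorname{Im}\kappa_a^\vee+K',
\]
where \(K'\) is the annihilator of the span of the \([\eta_c]\).

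\textbf{Step 2: reduce from \((Z,E_a)\) to \((T,Y_a)\).} Take a class in \(\mathrm{H}_{d+1}(Z,E_a)\). Use the long exact sequence of the triple, or excision, for \((Z,E_a\cup D)\supset(Z,E_a)\). Because \(D+E_a\) is snc, Poincaré–Lefschetz duality gives
\[
\mathrm{H}_{d+1}(Z,E_a\cup D)\cong\mathrm{H}^{d+1}(T\setminus Y_a)
\quad\text{and}\quad
\mathrm{H}_{d+1}(T,Y_a)\cong\mathrm{H}^{d+1}(U_a,B_a).
\]
The cleaner route is to compare \(\partial\)-images directly. A relative cycle for \((Z,E_a)\) differs from one for \((T,Y_a)\) only by chains supported near \(D\). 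Their boundary contributions lie in the image of \(\mathrm{H}_d(E_a\cap D)\to\mathrm{H}_d(E_a)\). By the weight argument with smooth projective \(E_a\cap D_i\), this image is Poincaré dual to the Gysin images from the \(E_a\cap D_i\). Those Gysin images pair to zero with \(\operatorname{Res}_{E_a}\omega_c\): the residues have vanishing residue along \(E_a\cap D\), since \(\omega_c\) has no pole along \(D\) for interior \(c\). Hence these contributions lie in \(K'\), and Step 2 follows from the exactness \(\mathrm{H}_d(Y_a)\to\mathrm{H}_d(E_a)\to\mathrm{H}_d(E_a,Y_a)\) together with the fact that \(\mathrm{H}_d(E_a,Y_a)\) is built from the boundary strata.

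\textbf{Step 3: conclusion.} Given \(\gamma_a\), write \(\gamma_a=\kappa_a^\vee(\Gamma_a)+k\) with \(k\in K+K'\), so that \(k\) pairs to zero with \([\eta_c]\). The pairing \(\langle\Gamma_a,-\rangle\) is then the desired flat functional on \(\mathrm{H}^n(T,Y_a)\). Flatness of \(\Gamma_a\) is arranged by choosing the decomposition at one base point and transporting it, which is legitimate because the ranks are locally constant on the chosen simply connected domain.

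\textbf{Main obstacle.} I expect Step 2 to be the hardest part: controlling the boundary contributions from \(E_a\cap D\) and checking that they pair to zero with \(\eta_c\). My first attempt would be to verify, for interior \(c\), that \(\operatorname{Res}_{E_a}\omega_c\) is a class on \(E_a\) whose restriction to each stratum \(E_a\cap D_i\) is exact, using the explicit vanishing order of \(\omega_c\) along \(D\) computed before Proposition~\ref{prop:raw-period-identification}.
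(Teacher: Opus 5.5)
There is a genuine gap, and it sits exactly where the paper does its real work.

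Under Poincar\'e duality on \(E_a\), the two pieces you want to span \(\mathrm{H}_d(E_a)\) in Step~2 are \(\operatorname{Im}(j_{a\ast})\) and the image of \(\mathrm{H}_d(E_a\cap D)\). They correspond to \(K_a^{\perp}\) and \(K_a\), where \(K_a=\operatorname{Ker}j_a^\ast=\sum_i(\iota_{a,i})_\ast\mathrm{H}^{d-2}(B_{a,i})\). Exactness of \(\mathrm{H}_d(Y_a)\to\mathrm{H}_d(E_a)\to\mathrm{H}_d(E_a,Y_a)\) does not give \(K_a+K_a^\perp=\mathrm{H}^d(E_a)\). By a dimension count this holds if and only if the intersection form is nondegenerate on \(K_a\), which is a Hodge-theoretic fact, not a formal one.

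The paper proves it as follows. Since \(L_a\mathrm{H}^{d-2}(E_a)\subseteq K_a\), one has \(K_a=(K_a\cap P^d(E_a))\oplus L_a\mathrm{H}^{d-2}(E_a)\). Hard Lefschetz gives nondegeneracy on the second summand, and the Hodge--Riemann relations give it on the sub-Hodge structure \(K_a\cap P^d(E_a)\). This yields the flat splitting \(\mathrm{H}^d(E_a)=K_a\oplus K_a^\perp\).

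You also silently identify \(\operatorname{Im}\kappa_a^\vee=j_{a\ast}\operatorname{Ker}(\mathrm{H}_d(Y_a)\to\mathrm{H}_d(T))\) with \(\operatorname{Im}j_{a\ast}\). This needs the paper's weight argument: in \(\mathrm{H}^d(Y_a)\), the image of \(\mathrm{H}^d(T)\) is pure of weight \(2d\) and the image of \(\mathrm{H}^d(E_a)\) is pure of weight \(d\). They meet trivially, so \(\operatorname{Ker}\kappa_a=\operatorname{Ker}j_a^\ast\) when \(d>0\). By contrast, what you flag as the main obstacle, namely that \(\eta_{c,a}\) pairs to zero with \(K_a\), is the easy step: it follows from \(\iota_{a,i}^\ast\eta_{c,a}=0\) and the projection formula.

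There are two further problems.

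\begin{enumerate}
\item In Step~1, \(\operatorname{Im}(\mathrm{H}_{d+2}(Z)\to\mathrm{H}_d(E_a))\) is not \(\partial\,\mathrm{H}_{d+1}(Z,E_a)\). The latter equals \(\operatorname{Ker}(\mathrm{H}_d(E_a)\to\mathrm{H}_d(Z))\), which is the intersection-orthogonal of the former. For a plane cubic the former is \(0\), while \(\partial\) is onto \(\mathrm{H}_1(E)\). As written, Step~1 would force every \(\eta_c\) to have zero periods on \(\partial\)-cycles. In any case, reducing to \(\partial\)-cycles would require a nondegeneracy statement you do not prove.
\item In Step~3, \(K'\) is not a local system, because the classes \(\eta_{c,a}\) vary holomorphically, not flatly. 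A decomposition modulo \(K'_{a_0}\), transported flatly, need not remain a decomposition modulo \(K'_a\), and locally constant ranks do not repair this. You must decompose modulo a \emph{flat} subspace killed by every \(\eta_{c,a}\). The paper does this by restricting \(\int_{\beta_a}\) to the flat summand \(K_a^\perp\), which contains every \(\eta_{c,a}\), and extending by zero on \(K_a\). That is possible only because of the splitting above.
\end{enumerate}
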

\begin{proof}
All cohomology groups are over \(\mathbf{C}\).
Note that \(\omega_{c}\) extends to \(Z\) without introducing new poles
(cf.~the paragraph preceding Proposition \ref{prop:raw-period-identification}).
Let \(D_i\) be the irreducible components of the toric boundary,
put \(B_{a,i}=E_a\cap D_i\), and denote the inclusions by
\begin{equation}
    j_{a}\colon Y_{a}\rightarrow E_{a},~\mbox{and}~\iota_{a,i}\colon B_{a,i}\rightarrow E_a.
\end{equation}
The localization sequence for the SNC boundary
gives
\begin{equation}
    K_{a}:=\operatorname{Ker} j_{a}^{\ast}=\sum_{i} 
    (\iota_{a,i})_{\ast}\left(\mathrm{H}^{d-2}(B_{a,i})\right).
\end{equation}
Indeed, the SNC Gysin spectral sequence has
\begin{equation}
  E_1^{-r,q}=\bigoplus_{|I|=r} \mathrm{H}^{q-2r}(B_{a,I})(-r)\Rightarrow
  \mathrm{H}^{q-r}(Y_a).
\end{equation}
Here the parentheses are the Tate twist
and the set \(I\) could be empty. In this case
\(B_{a,\emptyset}\) in understood as \(E_{a}\).
The spectral sequence
degenerates at \(E_2\), since \(E_a\) and its divisor strata 
are all smooth projective (cf.~\cite{1971-Deligne-theorie-de-hodge-ii}*{\S3.2}). 
At \(E_1^{0,d}=\mathrm H^d(E_a)\), the differential is
\begin{equation}
  \bigoplus_i\mathrm{H}^{d-2}(B_{a,i})(-1)\to\mathrm{H}^d(E_a).
\end{equation}
Consequently,
\begin{equation}
  K_a=\operatorname{Im}\!\left(\bigoplus_i\mathrm H^{d-2}(B_{a,i})(-1)\rightarrow\mathrm H^d(E_a)\right).
\end{equation}
Next we show that \(\eta_{c,a}=\operatorname{Res}_{E_{a}}(\omega_{c})\in 
\mathrm{H}^{d}(E_{a})\) 
is orthogonal to \(K_{a}\) for \(c=(m,k)\)
in the interior of \(C\).
Recall that \(\omega_{c}\) is a global holomorphic \(n\)-form on
\(Z\setminus E_{a}\). Its pullback to the divisor \(D_{i}\setminus B_{a,i}\)
vanishes. This implies that 
\begin{equation}
    \iota_{a,i}^{\ast}\eta_{c,a}=0.
\end{equation}
Consequently, for every \(\xi\in \mathrm{H}^{d-2}(B_{a,i})\), the projection
formula gives
\begin{equation}
  \int_{E_a}\eta_{c,a}\cup(\iota_{a,i})_{\ast}(\xi)=
  \int_{B_{a,i}}\iota_{a,i}^{\ast}(\eta_{c,a})\cup\xi=0.
\end{equation}
Hence \(\eta_{c,a}\) is orthogonal to \(K_a\) for the intersection
pairing.

Choose a polarization on \(E_{a}\) using the restriction \(L_a\) of a fixed
ample toric divisor on \(Z\). Writing
\(L_a=\sum_{i} c_i[B_{a,i}]\), we also obtain
\begin{equation}
  \operatorname{Im}\left(L_{a}\cup -\colon \mathrm{H}^{d-2}(E_{a})
  \to \mathrm{H}^{d}(E_{a})\right)\subset K_{a},
\end{equation}
because \((\iota_{a,i})_{\ast}\mathbf{1}=[B_{a,i}]\).
Now we use the Lefschetz decomposition
\begin{equation}
  K_a=\bigl(K_a\cap P^d(E_a)\bigr) \oplus \bigl(L_{a}\mathrm{H}^{d-2}(E_a)\bigr).
\end{equation}
By hard Lefschetz, the intersection pairing on the second summand is 
nondegenerate, while it is also nondegenerate on the first because \(K_{a}\)
is a real Hodge substructure.
Then it follows that the polarization gives a flat splitting
\begin{equation}
    \mathrm{H}^d(E_{a})=K_{a}\oplus K_{a}^\perp.
\end{equation}
Recall that
\begin{equation}
  \kappa_{a}=\delta_{Y_a}j_{a}^{\ast}\colon
  \mathrm{H}^d(E_a)\rightarrow \mathrm{H}^{d+1}(T,Y_{a}).
\end{equation}
The image of \(\mathrm H^d(T)\) in \(\mathrm H^d(Y_a)\) is pure of
weight \(2d\), whereas the image of \(\mathrm H^d(E_a)\) is pure of
weight \(d\). Strictness of morphisms of mixed Hodge structures and
\(d>0\) imply that these images have zero intersection. The long exact
sequence of the pair \((T,Y_a)\) therefore gives
\begin{equation}
 \operatorname{Ker}\kappa_{a}=\operatorname{Ker} j_{a}^{\ast}.
\end{equation}
In particular, \(\kappa_{a}\) identifies \(K_{a}^\perp\) with its image
as a local system.

Now let \(\beta_{a}\) be a flat compact \(d\)-cycle on \(E_{a}\).
Restrict its integration functional to \(K_{a}^\perp\) and transport
it through \(\kappa_{a}\). On our simply connected parameter domain,
this flat functional extends to a flat functional \(\Lambda_{a}\)
on the whole \(\mathrm{H}^{d+1}(T,Y_a)\). Since
\(\eta_{c,a}\in K_a^\perp\), we have
\begin{equation}
  \Lambda_{a}\left(\kappa_{a}(\eta_{c,a})\right)=\int_{\beta_a}\eta_{c,a}.
\end{equation}
This proves the lemma.
\end{proof}

In particular, if \(c=\delta=(0,1)\), we obtain the formula for
the holomorphic top form 
\begin{equation}
  \Omega_{a}:=\eta_{\delta}=\operatorname{Res}_{E_{a}}\frac{\Omega}{f_{a}}.
\end{equation}

\begin{instance}[The case \(n=1\)]
Take \(\Delta=[-1,1]\). The compactification is
\(Z=\mathbf{P}^1\), and let
\begin{equation}
  f_a(x)=a_{1}x^{-1}+a_0+a_{2}x,~\mbox{with}~a_{1}a_{2}\ne 0.
\end{equation}
be a nondegenerate Laurent polynomial.
Suppose \(f_{a}\) has both roots lie in \(T=\mathbf{C}^{\ast}\), so
\begin{equation}
  Y_a=E_a=\{p_+(a),p_-(a)\}.
\end{equation}
Work on a sufficiently small simply connected parameter domain on
which the roots can be labeled consistently. 

Since \(Y_a=E_a\), the map \(\kappa_{a}\) is the connecting
homomorphism in the exact sequence
\begin{equation}
  0\rightarrow \mathrm{H}^0(T)
\xrightarrow{i_a^{\ast}}\mathrm{H}^0(E_a)
\xrightarrow{\kappa_{a}}\mathrm{H}^1(T,E_a)
\rightarrow \mathrm{H}^1(T)
\rightarrow 0.
\end{equation}
Under the identification \(\mathrm{H}^0(E_{a})\cong \mathbf{C}^2\), the
restriction map \(i_{a}^{\ast}\) sends a constant \(c\) to \((c,c)\). Consequently,
\begin{equation}
  \operatorname{Ker}\kappa_{a}=\mathbf{C}(1,1),~\mbox{and}~
  \mathrm{H}^0(E_a)=\mathbf{C}(1,1)\oplus\mathbf{C}(1,-1).
\end{equation}
Both summands are flat under the chosen local labeling, and
\(\kappa_a\) identifies \(\mathbf{C}(1,-1)\) with its image.

The meromorphic form
\begin{equation}
  \frac{\Omega}{f_a}=\frac{1}{f_{a}(x)}\frac{\mathrm{d}x}{x}
\end{equation}
is regular at \(0\) and \(\infty\), and has simple poles precisely
at \(p_+\) and \(p_-\). Its residues have zero sum. Thus
\begin{equation}
  \Omega_{a}:=\operatorname{Res}_{E_a}\frac{\Omega}{f_a}=\bigl(r(a),-r(a)\bigr)\in \mathbf{C}(1,-1).
\end{equation}
Choose a flat relative homology class
\begin{equation}
  \Gamma(a)\in \mathrm{H}_1(T,E_a)
\end{equation}
whose boundary is
\begin{equation}
  \partial\Gamma(a)=[p_+(a)]-[p_-(a)].
\end{equation}
Such a class exists by the homology exact sequence of the pair.
The boundary map surjects onto the kernel of
\begin{equation}
  \mathrm{H}_0(E_a)\rightarrow \mathrm{H}_0(T),
\end{equation}
which is the anti-diagonal line. One may choose a path at a base
point and transport its relative homology class flatly.
With our convention for the connecting homomorphism,
\begin{equation}
  \left\langle\kappa_{a}([\Omega_{a}]),\Gamma(a)\right\rangle
  =\int_{\partial\Gamma(a)}\Omega_{a}=2r(a).
\end{equation}
Now let
\begin{equation}
  \beta(a)=c_+[p_+(a)]+c_-[p_-(a)]
\end{equation}
be any flat compact zero-cycle, where \(c_{\pm}\) are
constant. Its period is
\begin{equation}
  \int_{\beta(a)}\Omega_{a}=(c_+-c_-)r(a).
\end{equation}
Hence
\begin{equation}
  \int_{\beta(a)}\Omega_{a}=\left\langle\kappa_{a}([\Omega_{a}]),\frac{c_+-c_-}{2}\Gamma(a)\right\rangle.
\end{equation}
The class \(\Gamma(a)\) on the right is flat, so the evaluation defines
the required flat functional on \(\mathrm{H}^1(T,E_{a})\).

Thus every compact period is obtained from
\(\kappa_{a}([\Omega_{a}])\), exactly as in the previous Lemma. Here the
diagonal kernel is handled by the residue theorem, which places
the distinguished residue class in a flat complementary line.
\end{instance}

\begin{proposition}\label{prop:rawcompactequality}
Suppose the full raw indexed family defined in Definition \ref{def:raw-b-series} 
is complete, i.e., its constant
coefficient functionals give all horizontal functionals on \(Q\).
Then we have equalities
\begin{equation}
\label{eq:rawcompactequality}
\begin{aligned}
  J_{c}(W)
  &=\operatorname{Span}\left\{\int_{\beta_a}\eta_{c}~\left|~
  \beta_a~\text{flat in}~\mathrm H_{n-1}(E_a)\right.\right\}\\
  &=\operatorname{Span}\{\lambda(\mathcal B_{c}^{\mathrm{raw}})\mid\lambda\colon R\to\mathbf{C}\}
\end{aligned}
\end{equation}
for every interior lattice point \(c\in C\).
Recall that \(W\) and \(J_{c}(W)\) are defined in Proposition \ref{prop:raw-period-identification}.
\end{proposition}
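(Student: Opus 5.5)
We establish \eqref{eq:rawcompactequality} as a cycle of inclusions among the three spaces: call them \(J_c(W)\), \(\mathfrak{P}_c\) (compact periods of \(\eta_c\) over flat cycles in \(\mathrm H_{n-1}(E_a)\)), and \(\mathfrak{B}_c\) (span of \(\lambda(\mathcal B_c^{\mathrm{raw}})\)). The plan is to prove \(\mathfrak{B}_c\subseteq J_c(W)\subseteq\mathfrak{P}_c\subseteq\mathfrak{B}_c\), using Proposition~\ref{prop:raw-period-identification}, the tube construction, Lemma~\ref{lem:compact-period-factor}, and the completeness hypothesis, in that order.

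\emph{Step 1: \(\mathfrak{B}_c\subseteq J_c(W)\).} The family \(\{\mathcal B_c^{\mathrm{raw}}\}\) of Definition~\ref{def:raw-b-series} satisfies \eqref{eq:rawindexed}; this is the sectorwise Gamma recurrence already recorded. So Proposition~\ref{prop:raw-period-identification} applies with \(G_c=\mathcal B_c^{\mathrm{raw}}\). For interior \(c\) it shows that \(\lambda(\mathcal B_c^{\mathrm{raw}})\) equals \((2\pi\mathrm i)^{-1}\int_{\operatorname{Tube}(\beta_\lambda)}\omega_c\), with \(q_a\operatorname{Tube}(\beta_\lambda)\in W\). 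Hence it lies in \(J_c(W)\).

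\emph{Step 2: \(J_c(W)\subseteq\mathfrak{P}_c\).} An element of \(W\) is \(q_a\operatorname{Tube}(\partial\gamma)\) with \(\gamma\in\mathrm H_n(Z,E_a)\). Its image is flat, and \(\beta=\partial\gamma\) is a flat class in \(\mathrm H_{n-1}(E_a)\). Since \(c\) is interior, \(\omega_c\) has poles only along \(E_a\) and vanishes appropriately on \(D\), so the integral over the relative class is well defined. The residue formula then gives \(\int_{\operatorname{Tube}\beta}\omega_c=2\pi\mathrm i\int_\beta\eta_c\), and the scalar does not affect spans.

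\emph{Step 3: \(\mathfrak{P}_c\subseteq\mathfrak{B}_c\).} Given a flat \(\beta_a\), Lemma~\ref{lem:compact-period-factor} (and Example for \(n=1\)) produces a flat functional \(\Lambda_a\) on \(\mathrm H^n(T,Y_a)\) with \(\int_{\beta_a}\eta_c=\Lambda_a(\kappa_a[\eta_c])\). Next we must check that \(\kappa_a[\eta_c]=\delta_{Y_a}\operatorname{Res}_{Y_a}\omega_c=\Theta_a([c])\) from Proposition~\ref{prop:relativederham}. This holds because \(j_a^\ast\operatorname{Res}_{E_a}\omega_c=\operatorname{Res}_{Y_a}\omega_c\). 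Then \(\Lambda_a\circ\Theta_a\) is a horizontal functional on \(Q\) by Proposition~\ref{prop:horizontalTheta}. By the completeness hypothesis (supplied by Lemma~\ref{lem:support-complete}) it equals \([c']\mapsto\lambda(\mathcal B_{c'}^{\mathrm{raw}})\) for a constant \(\lambda\). Evaluating at \([c]\) gives \(\int_{\beta_a}\eta_c=\lambda(\mathcal B_c^{\mathrm{raw}})\).

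The main obstacle is Step 3, specifically the compatibility of normalizations. We must ensure that the flat functional from Lemma~\ref{lem:compact-period-factor} is horizontal for the same connection under which \(\Theta_a\) is flat, and that the sign and \(\Gamma(k)\) conventions in \(\omega_c\) match \(\Theta_a([c])\). This matters because \(\eta_c\) for \(c\neq\delta\) involves higher-order poles, so the residue must be the cohomological one. I would settle this by noting that both \(\kappa_a\) and \(\Theta_a\) are built from the same connecting map \(\delta_{Y_a}\), which is compatible with Gauss--Manin. A further subtlety is that Lemma~\ref{lem:compact-period-factor} works on a smooth resolution \(\widetilde Z\). I would handle this by the commutative diagram with \(\rho_\ast\): it shows that flat cycles on \(E_a\) and on \(\widetilde E_a\) give the same span of periods.
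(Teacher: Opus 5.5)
Your proposal is correct and follows the paper's own proof: the cycle of inclusions \(\operatorname{Span}\{\lambda(\mathcal B_c^{\mathrm{raw}})\}\subseteq J_c(W)\subseteq\{\text{compact periods}\}\subseteq\operatorname{Span}\{\lambda(\mathcal B_c^{\mathrm{raw}})\}\), obtained from Proposition~\ref{prop:raw-period-identification}, the tube--residue step (which the paper calls obvious), and Lemma~\ref{lem:compact-period-factor} combined with completeness via \(\Theta_a([c])=\kappa_a[\eta_c]\). Your extra checks, namely \(j_a^\ast\operatorname{Res}_{E_a}\omega_c=\operatorname{Res}_{Y_a}\omega_c\) and horizontality of \(\Lambda_a\circ\Theta_a\) via Proposition~\ref{prop:horizontalTheta}, only make explicit what the paper takes for granted.
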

\begin{proof}
Proposition~\ref{prop:raw-period-identification} gives the inclusion
\begin{equation}
   \operatorname{Span}\{\lambda(\mathcal B_{c}^{\mathrm{raw}})\mid\lambda\colon R\to\mathbf{C}\}\subset J_{c}(W).
\end{equation} 
Tube integration identifies the latter space with a
subspace of compact hypersurface periods. The inclusion 
\begin{equation}
  J_{c}(W)
  \subset\operatorname{Span}\left\{\int_{\beta_a}\eta_{c}~\left|~
  \beta_a~\text{flat in}~\mathrm H_{n-1}(E_a)\right.\right\}
\end{equation}
is also obvious.
Finally, Lemma~\ref{lem:compact-period-factor} expresses every compact period as
the evaluation of \(\Theta_{a}([c])=\kappa_{a}[\eta_{c}]\) by a flat
functional on relative cohomology. Transfer this functional to \(Q\)
and use completeness of the full raw family. Its value on \([c]\)
is \(\lambda(\mathcal B_{c}^{\mathrm{raw}})\) for one fixed
coefficient functional \(\lambda\). This proves
\begin{equation}
\begin{aligned}
  &\operatorname{Span}\left\{\int_{\beta_a}\eta_{c}~\left|~
  \beta_a~\text{flat in}~\mathrm H_{n-1}(E_a)\right.\right\}\\
  &\qquad \subset\operatorname{Span}\{\lambda(\mathcal B_{c}^{\mathrm{raw}})\mid\lambda\colon R\to\mathbf{C}\}.  
\end{aligned}
\end{equation}
Hence all the three spaces are equal.
\end{proof}

\begin{theorem}[Hyperplane conjecture]
\label{thm:seriesimage}
Let \(R=\mathrm H^\bullet_{\mathrm{CR}}(\check{\mathcal{Z}};\mathbf{C})\)
and let \(\mathcal B_{c}\) be the explicitly supported sectorwise
series \eqref{eq:sector-normalized} in Definition 
\ref{def:HLZ-b-series} with
\(c\) being an interior lattice point. Then
\begin{equation}\label{eq:hyperplanefinal}
 \mathcal{P}_{\mathcal B_{c}}(e\cup R)=J_{c}(W).
\end{equation}
\end{theorem}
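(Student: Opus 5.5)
The proof reduces to Proposition~\ref{prop:rawcompactequality}, using the reflection identity \eqref{eq:orbifold-ref} to convert raw coefficients into coefficients of \(e\cup\mathcal B_c\).

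\emph{Step 1: completeness.} The hypothesis of Proposition~\ref{prop:rawcompactequality} is exactly the conclusion of Lemma~\ref{lem:support-complete}. That lemma uses the rank equality \(\dim Q=\dim R\) from Theorem~\ref{thm:comparison}, together with Lemma~\ref{lem:zeroterms} for the injectivity of \(R^\vee\to\{\text{indexed solutions}\}\). Hence, for every interior lattice point \(c\),
\begin{equation*}
 J_c(W)=\operatorname{Span}\{\lambda(\mathcal B_c^{\mathrm{raw}})\mid \lambda\colon R\to\mathbf{C}\}.
\end{equation*}

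\emph{Step 2: rewriting the raw span.} By \eqref{eq:orbifold-ref}, \(\mathcal B_c^{\mathrm{raw}}=-e\cup U(e)\cup\mathcal B_c\) with \(U(e)=\sin(\pi e)/(\pi e)\). Since \(e\) is an untwisted nilpotent class acting sectorwise, \(U(e)\) is a unit in \(R\) (its constant term is \(1\)). It commutes with \(e\), and by Poincar\'e duality it is self-adjoint for the Chen--Ruan pairing. Thus for \(\lambda\in R^\vee\) we get
\begin{equation*}
 \lambda(\mathcal B_c^{\mathrm{raw}})=\lambda\bigl(-U(e)\cup e\cup\mathcal B_c\bigr)=\lambda'(e\cup \mathcal B_c),\qquad \lambda'=-\lambda\circ(U(e)\cup-).
\end{equation*}
Because \(U(e)\) is invertible, \(\lambda\mapsto\lambda'\) is a bijection of \(R^\vee\). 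The raw span therefore equals the span of the scalar components of \(e\cup\mathcal B_c\).

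\emph{Step 3: passing to \(\mathcal P_{\mathcal B_c}\).} Every functional on \(R\) has the form \(\langle\alpha\cup-\rangle_{\mathrm{CR}}\), because the orbifold Poincar\'e pairing is nondegenerate. Using \eqref{eq:Bprecomposition}, as in Proposition~\ref{prop:Bimagecriterion} but now for \(\mathcal B_c\) and the CR pairing, the span of the scalar components of \(e\cup\mathcal B_c\) is
\begin{equation*}
 \{\langle\alpha\cup e\cup\mathcal B_c\rangle_{\mathrm{CR}}\}=\mathcal P_{\mathcal B_c}(e\cup R).
\end{equation*}
This uses commutativity of the CR product. Combining Steps~1--3 gives \eqref{eq:hyperplanefinal}.

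The main subtlety I expect lies in Step~2, in justifying \eqref{eq:orbifold-ref} for general interior \(c\) and not only \(c=\delta\). One must check that Lemma~\ref{lem:centralbox} forces \(\ell_0\le-1\) on every nonzero raw term, so that termwise reflection \(1/\Gamma(1+\ell_0+D_0)=(-1)^{\ell_0+1}\Gamma(-\ell_0-D_0)\cdot(-D_0)\sin(\pi D_0)/(\pi D_0)\) applies with integral central exponent and no fractional sine factors. One must also check that the terms dropped by the support condition in Definition~\ref{def:HLZ-b-series} contribute zero on the raw side. I would verify this sectorwise and use \(\alpha_0(b)=0\) from Lemma~\ref{lem:centralbox}. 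Convergence of the regularized series on a chamber, needed so that both sides are genuine solutions, follows from the Gamma estimates already cited.
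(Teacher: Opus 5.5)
Your proposal is correct and follows the paper's own proof: you feed completeness from Lemma~\ref{lem:support-complete} into Proposition~\ref{prop:rawcompactequality}, and then use the reflection identity \eqref{eq:orbifold-ref}, the invertibility of \(U(e)\), and the Frobenius property of the nondegenerate Chen--Ruan pairing to identify the raw span with \(\mathcal{P}_{\mathcal{B}_c}(e\cup R)\). One harmless slip: in your termwise reflection formula the factor should be \(D_0\sin(\pi D_0)/(\pi D_0)=\sin(\pi D_0)/\pi=-e\,U(e)\), not \((-D_0)\sin(\pi D_0)/(\pi D_0)\) (check \(\ell_0=-1\), where \(1/\Gamma(D_0)\approx D_0\)); this overall sign does not affect any span, and your Step~2 already uses the correct form \(-e\cup U(e)\cup\mathcal{B}_c\).
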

\begin{proof}
Lemma~\ref{lem:support-complete} gives completeness of the full raw
indexed family for the support allowed in \S\ref{sec:preliminaries}. 

By \eqref{eq:orbifold-ref}, the Frobenius property of the
nondegenerate Chen--Ruan pairing, and invertibility of
\(U(e)=\sin(\pi e)/(\pi e)\),
\begin{equation}
 \begin{aligned}
 \operatorname{Span}\{\lambda(\mathcal B_{c}^{\mathrm{raw}})\mid 
 \lambda\colon R\to\mathbf{C}\}
 &=\mathcal{P}_{\mathcal B_{c}}(e\cup U(e)\cup R)\\
 &=\mathcal{P}_{\mathcal B_{c}}(e\cup R).
 \end{aligned}
\end{equation}
Combining this with \eqref{eq:rawcompactequality} proves the result.
\end{proof}

Take \(c=\delta\). We recover the HLY hyperplane conjecture.
\begin{theorem}[HLY hyperplane conjecture]
\label{thm:HLY-hyperplane}
\begin{equation}
 \mathcal{P}_{\mathcal{B}_{\delta}}(e\cup R)=J_{\delta}(W).
\end{equation}
\end{theorem}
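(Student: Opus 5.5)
The statement is the specialization of Theorem~\ref{thm:seriesimage} to the distinguished index \(c=\delta=(0,1)\). The plan is to check that \(\delta\) is admissible there and that \(\mathcal{B}_\delta\) of Definition~\ref{def:HLZ-b-series} is the HLY series. Admissibility: \(\delta\) is an interior lattice point of \(C=\operatorname{Cone}(\Delta\times\{1\})\). Since \(\Delta\) is reflexive, \(\mathbf{0}\) is its unique interior lattice point, so \((\mathbf{0},1)\) lies in the interior of \(C\). Hence Lemma~\ref{lem:centralbox}, Definition~\ref{def:HLZ-b-series} and Theorem~\ref{thm:seriesimage} all apply with \(c=\delta\).

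Next, compare \(\mathcal{B}_\delta\) with the regularized series \eqref{eq:HLYrenormalized-b-series}.

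\begin{itemize}
\item Use \(L_{\delta,b}=(-1,0,\ldots,0)+L_{0,b}\) to rewrite \(\ell_0\le -1\) as \(l_0\le 0\).
\item Use \((-1)^{\ell_0+1}\Gamma(-\ell_0-D_0)=(-1)^{l_0}\Gamma(1-l_0-D_0)\) to obtain the displayed formula following \eqref{eq:orbifold-ref}.
\item When \(\check{Z}\) is smooth, only \(b=0\) contributes, and this is exactly \eqref{eq:HLY-renormalized-b-series}.
\end{itemize}

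If one prefers HLY's own Frobenius normalization \(w_0(q,p)\), the relation \(w_0=a_0\exp(-\pi\mathrm{i}\,e)\cup\mathcal{B}_\delta\) applies. The factor \(a_0\) is a nonzero scalar function and the branch factor \(\exp(-\pi\mathrm{i}\,e)\) is an invertible element of \(R\) commuting with \(e\). Using \(\mathcal{P}_{\mathcal{B}}(\alpha\cup-)\) as in Proposition~\ref{prop:Bimagecriterion}, the space \(\mathcal{P}(e\cup R)\) is unchanged. The only caveat is that \(J_\delta(W)\) must then be read with the same \(a_0\)-normalization, which the paper explicitly allows.

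Then invoke Theorem~\ref{thm:seriesimage} with \(c=\delta\) to get \(\mathcal{P}_{\mathcal{B}_\delta}(e\cup R)=J_\delta(W)\). On the right, \(\omega_\delta=\Omega/f_a\), whose residue on \(E_a\) is the holomorphic volume form \(\Omega_a\). By Proposition~\ref{prop:rawcompactequality}, \(J_\delta(W)\) is the span of the compact periods \(\int_{\beta_a}\Omega_a\), with \(\beta_a\) flat in \(\mathrm{H}_{n-1}(E_a)\), i.e.\ the periods from vanishing cycles. On the left, the coefficients of \(\mathcal{B}\cup e\) span \(\mathcal{P}_{\mathcal{B}}(e\cup R)\) by \eqref{eq:Bprecomposition}. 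This is exactly Conjecture~\ref{conj:HLY}.

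The one step needing care is the case \(n=1\). There Lemma~\ref{lem:compact-period-factor} was stated for \(d=n-1\ge 1\), and the weight argument there does not apply. The plan is to cite the Example for \(n=1\), where the diagonal kernel is handled by the residue theorem, so Proposition~\ref{prop:rawcompactequality} still holds at \(c=\delta\). Otherwise the proof is a direct specialization and requires no new estimates.
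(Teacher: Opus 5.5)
Your proposal is correct and matches the paper, which proves this statement in one line by specializing Theorem~\ref{thm:seriesimage} to the interior lattice point \(c=\delta\). Your additional checks are all accurate and make explicit what the paper leaves implicit: the interiority of \(\delta\), the reindexing \(\ell_0=l_0-1\) that identifies \(\mathcal{B}_\delta\) with the HLY series, the harmless \(a_0\) and \(\exp(-\pi\mathrm{i}\,e)\) normalizations, and the observation that the \(n=1\) case rests on the worked example rather than on Lemma~\ref{lem:compact-period-factor}.
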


\begin{remark}
The sector separation in Lemma~\ref{lem:support-complete} supplies the
finite-index step explicitly. We emphasize that
we do not replace the ambient lattice
by the lattice generated by the support, since that might change the
torus and the relative cohomology. This extension concerns the full
indexed system, not the completeness of a single ordinary GKZ component.
\end{remark}

\begin{corollary}
We have 
\begin{equation}
  \dim W = \operatorname{rank}(e\cup -) = \dim(e\cup R).
\end{equation}
\end{corollary}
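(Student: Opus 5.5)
The plan is to read off the two equalities from Theorem~\ref{thm:HLY-hyperplane}, the equality $\operatorname{rank}(e\cup-)=\dim(e\cup R)$ being tautological. Start from
\begin{equation*}
 \mathcal P_{\mathcal B_\delta}(e\cup R)=J_\delta(W).
\end{equation*}
Two facts are then needed: that $\mathcal P_{\mathcal B_\delta}$ is injective on $e\cup R$, and that $J_\delta$ is injective on $W$. Granting these,
\begin{equation*}
 \dim(e\cup R)=\dim\mathcal P_{\mathcal B_\delta}(e\cup R)=\dim J_\delta(W)=\dim W.
\end{equation*}
This is precisely the dimension condition of Proposition~\ref{prop:Bimagecriterion}.

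For injectivity of $\mathcal P_{\mathcal B_\delta}$ on $e\cup R$, use \eqref{eq:orbifoldreflection}. By the Frobenius property of the Chen--Ruan pairing, $\mathcal P_{\mathcal B_\delta}(e\cup\alpha)=\langle\alpha\cup e\cup\mathcal B_\delta\rangle$. Set $\alpha'=-U(e)^{-1}\alpha$, which is a bijective reparametrization. Then this equals $\langle\alpha'\cup\mathcal B^{\mathrm{raw}}_\delta\rangle$, so it suffices to show $\alpha\mapsto\langle\alpha\cup\mathcal B_\delta^{\mathrm{raw}}\rangle$ has kernel exactly $\operatorname{Ann}$ of the relevant part; equivalently, that the constant functionals $\lambda$ with $\lambda(\mathcal B^{\mathrm{raw}}_\delta)=0$ are exactly those killing $e\cup R$.

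The approach is to use the indexed family. If $\lambda(\mathcal B_\delta^{\mathrm{raw}})\equiv0$, then by \eqref{eq:rawindexed} all $\lambda(\mathcal B^{\mathrm{raw}}_{\delta+Au})$ vanish. By Proposition~\ref{prop:raw-period-identification}, the flat cycle $\Gamma_\lambda$ then pairs to zero with $\Theta_a[\delta]$ and all of its Gauss--Manin derivatives. Under Lemma~\ref{lem:compact-period-factor}, this means the compact cycle $\beta_\lambda$ annihilates the sub-VHS generated by $\Omega_a$.

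One then has to compare this with $\operatorname{Ker}(\lambda\mapsto\lambda(\mathcal B^{\mathrm{raw}}_\bullet))$ restricted to height-zero data. The cleaner route is the following. By Lemma~\ref{lem:support-complete}, $R^\vee\cong$ (flat sections of $Q^\vee$). The map $\lambda\mapsto\lambda(\mathcal B_\delta^{\mathrm{raw}})$ then factors as
\begin{equation*}
 R^\vee\cong \mathrm H_n(T,Y_a)\xrightarrow{\partial} \mathrm H_{n-1}(Y_a)\to \text{periods of }\Omega_a.
\end{equation*}
Its image has dimension $\dim J_\delta(W)$, which by the theorem also equals $\dim\mathcal P_{\mathcal B_\delta}(e\cup R)$.

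The remaining point is injectivity of $J_\delta$ on $W$, i.e.\ that no nonzero tube class in $W$ has identically vanishing $\Omega/f_a$-period. A class in $W$ is the tube over $\beta\in\partial\mathrm H_n(Z,E_a)$, modulo tubes killed by $q_a$. Its period is $\int_\beta\Omega_a$ up to $2\pi\mathrm i$. The image $\partial\mathrm H_n(Z,E_a)$ is dual to $\mathrm H^{n-1}(E_a)$ modulo the image of $\mathrm H^{n-1}(Z)$, and $q_a\circ\operatorname{Tube}$ kills exactly the classes pairing trivially with $K_a^\perp$ (the classes supported near $B_a$). So $W\cong (K_a^\perp)^\vee$.

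The main obstacle is to show that the periods of $\Omega_a$ separate $(K_a^\perp)^\vee$, i.e.\ that $K_a^\perp$ is the smallest flat subbundle containing $\Omega_a$. I would try to deduce this from Lemma~\ref{lem:support-complete}. Every flat functional on $\mathrm H^n(T,Y_a)$ is $\lambda(\mathcal B^{\mathrm{raw}}_\bullet)$, and its restriction through $\kappa_a$ to $K_a^\perp$ is determined by the values on $\Theta_a[c]$ for interior $c$. These classes are the Gauss--Manin derivatives of $\Theta_a[\delta]$ together with the Euler/Jacobian relations. By the Griffiths--Batyrev description, the interior classes $\eta_c$ span $K_a^\perp$; this is the residue part of Proposition~\ref{prop:relativederham} combined with the $\operatorname{gr}Q_a\cong R_a$ comparison. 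Hence a flat cycle vanishing on $\Omega_a$ and all its derivatives vanishes on $K_a^\perp$, giving injectivity, and the corollary follows.
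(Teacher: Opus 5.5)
There is a genuine gap, and it cannot be fixed by filling in details. Your chain of equalities rests on two injectivity claims: that $\mathcal P_{\mathcal B_\delta}$ is injective on $e\cup R$, and that $J_\delta$ is injective on $W$. Both are false in the generality of the statement, where the columns of $A$ need not generate $\widetilde M$.

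The failing step is your last one: the claim that the interior classes $\Theta_a[c]$ are Gauss--Manin derivatives of $\Theta_a[\delta]$ modulo the Euler/Jacobian relations. The operators $D_{\mu,a}$ and $\nabla_{\partial/\partial a_i}[c]=[c+v_i]$ preserve the grading of $S$ by $\widetilde M/\mathbf{Z}A$. So $[\delta]$ only ever produces classes $[c]$ with $c\in\delta+\mathbf{Z}A$, and the interior points $\delta+b$ with $b$ a box element not in $\mathbf{Z}A$ are never reached.

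On the series side this is the fact that $L_{\delta,b}=\varnothing$ exactly when $b\notin\mathbf{Z}A$. Hence $\mathcal B_\delta$ has no component on those sectors, and $\mathcal P_{\mathcal B_\delta}$ kills every class supported there, including nonzero classes of the form $e\cup\alpha$.

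Here is a concrete case. Take the free sum of the triangle of Remark~\ref{rmk:ex-generate} with $[-1,1]$, with support its five vertices and $\mathbf 0$.
\begin{itemize}
\item Then $\check{\mathcal Z}=[\mathbf P^2/\mu_3]\times\mathbf P^1$, and its six twisted sectors have the cohomology of $\mathbf P^1$, with $e$ restricting to $2P\neq0$ on each.
\item So $\dim(e\cup R)=4+6=10$. Also $\dim W=10$, coming from the $22-12$ classes of the K3 surface $E_a$ orthogonal to the boundary components.
\item But $M_A(-\delta)$ has rank $18/3=6$, so $\mathcal P_{\mathcal B_\delta}(e\cup R)=J_\delta(W)$ has dimension at most $6$.
\end{itemize}
Theorem~\ref{thm:HLY-hyperplane} is consistent with this only because both sides collapse in the same way. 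No argument through the single index $\delta$ can give the corollary. Separately, even in the lattice-generating case you never prove that $\operatorname{Ker}\bigl(\lambda\mapsto\lambda(\mathcal B^{\mathrm{raw}}_\delta)\bigr)=(e\cup R)^\perp$; you reformulate it and then move on to the other injectivity.

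What is needed is to use the whole interior family at once, as the paper does. Put $\Phi(\lambda)=(\lambda(\mathcal B^{\mathrm{raw}}_c))_{c\in C^\circ\cap\widetilde M}$.
\begin{itemize}
\item The inclusion $(e\cup R)^\perp\subset\operatorname{Ker}\Phi$ follows from $\mathcal B^{\mathrm{raw}}_c=-e\cup U(e)\cup\mathcal B_c$.
\item For the converse, for each box element $b$ use the interior index $\delta+b$ and the exponent $\ell_0=-1$, $\ell_i=-\alpha_i(b)$. Its coefficient is $a^{\ell}\,(-e\cup h_b(D)\mathbf 1_b)\,a^{D}$ with $h_b$ invertible. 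Lemma~\ref{lem:zeroterms} then gives $\lambda(e\cup R)=0$ sector by sector, so $\dim\operatorname{Im}\Phi=\dim(e\cup R)$.
\item On the geometric side, use Lemma~\ref{lem:interior-derham}: \emph{all} interior forms $\omega_c$, not merely the derivatives of $\omega_\delta$, span $\mathrm H^n(U_a,B_a)$. Hence $\operatorname{Ker}\Phi$ consists of those $\lambda$ whose cycle $\Gamma_\lambda$ maps to zero in $\mathrm H_n(U_a,B_a)$.
\item This produces $W'\subset W$ of dimension $\dim R-\dim\operatorname{Ker}\Phi=\dim(e\cup R)$, and the reverse inclusion $W\subset W'$ comes from Lemma~\ref{lem:compact-period-factor}.
\end{itemize}
Your outline can be repaired only when $\mathbf{N}A=C\cap\widetilde M$. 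Then $\delta+\mathbf{N}A=C^\circ\cap\widetilde M$ by reflexivity, so the $\delta$-component and its derivatives recover every interior component. Together with the leading-term argument above, both of your injectivity claims then hold.
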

\begin{proof}
  Consider the morphism \(\Phi\colon R^{\vee}\to \prod_{c\in C^{\circ}\cap\widetilde{M}}\mathscr{O}\) by
  \begin{equation}
    \lambda\to \{\lambda(\mathcal{B}_{c}^{\mathrm{raw}})\}_{c\in C^{\circ}\cap\widetilde{M}}.
  \end{equation}
  If \(\lambda\) annihilates \(eR\), then \(\lambda\in\operatorname{Ker}(\Phi)\).

  Conversely, suppose that \(\lambda(\mathcal{B}_{c}^{\mathrm{raw}})=0\) for every \(c
  \in C^{\circ}\cap\widetilde{M}\).
  For each box element \(b=\sum_{i}\alpha_{i}v_{i}\), 
  Then \(\delta+b\in C^{\circ}\cap\widetilde{M}\).
  We subsequently take 
  \begin{equation}
    \ell_{i}=
    \begin{cases}
      -\alpha_{i},~&i\ne 0\\
      -1,~&i=0.
    \end{cases}
  \end{equation}
  Then for \(\ell=(\ell_{i})\) we have \(A\ell = -\tilde{b}-\delta\).
  Note that 
  \begin{equation}
    \frac{1}{\Gamma(D_{0})}=\frac{D_{0}}{\Gamma(1+D_{0})}=\frac{-e}{\Gamma(1-e)}.
  \end{equation}
  By the same argument, we see that 
  \begin{equation}
    \lambda\left(-e\cup h_{b}(D)\mathbf{1}_{b}\exp\left(\sum D_{i}\log a_{i}\right)\right)=0
  \end{equation}
  where 
  \begin{equation}
    h_{b}(D):=\frac{1}{\Gamma(1-e)}\frac{1}{\prod_{i>0}\Gamma(1+\ell_{i}+D_{i})}
  \end{equation}
  is invertible. It follows that \(\lambda(e\cup R)=0\). 
  We deduce that 
  \begin{equation}
    \dim (e\cup R) =\dim R - \dim\operatorname{Ker}(\Phi) = \dim\operatorname{Im}(\Phi).
  \end{equation}
  Next we investigate 
  \(S^{\circ}:=\mathbf{C}[C^{\circ}\cap\widetilde{M}]\subset S\)
  the interior subspace. One can show that (cf.~Lemma \ref{lem:interior-derham})
  \begin{equation}
    \mathrm{H}^{n}(U_{a},B_{a};\mathbf{C})=\operatorname{Span}_{\mathbf{C}}\{[\omega_{c}]\mid 
    c\in S^{\circ}\}.
  \end{equation}
  If \(\lambda\in R^{\vee}\) with \(\lambda(\mathcal{B}_{c}^{\mathrm{raw}})=0\),
  by Proposition \ref{prop:raw-period-identification}, 
  there exists \(\Gamma_{\lambda}(a)\in \mathrm{H}_{n}(T,Y_{a};\mathbf{C})\)
  such that 
  \begin{equation}
    \lambda(\mathcal{B}_{c}^{\mathrm{raw}})=\int_{\partial \Gamma_{\lambda}(a)}
    \operatorname{Res}_{Y_{a}}(\omega_{c})=0.
  \end{equation}
  Therefore \(\lambda(\mathcal{B}_{c}^{\mathrm{raw}})=0\)
  if and only if \(\Gamma_{\lambda}(a)\) is in the kernel of
  \begin{equation}
    \mathrm{H}_{n}(T,Y_{a})\to 
    \mathrm{H}_n(Z,E_{a})\xrightarrow{\partial}
    \mathrm{H}_{n-1}(E_{a})
    \xrightarrow{\operatorname{Tube}}\mathrm{H}_n(U_{a})
    \xrightarrow{q_{a}}\mathrm{H}_n(U_{a},B_{a}).
  \end{equation}
  Call the image \(W'\). Then
  we can conclude that 
  \begin{equation}
  \begin{aligned}
    \dim W' &= \dim \mathrm{H}_{n}(T,Y_{a};\mathbf{C})-\dim\operatorname{Ker}(\Phi)\\
    &=\dim R-\dim\operatorname{Ker}(\Phi)\\
    &=\dim(e\cup R).
  \end{aligned}
  \end{equation}
  Clearly we have \(W'\subset W\). For \(\beta\in\mathrm{H}_{n}(Z,E_{a})\),
  \(\partial\beta\in\mathrm{H}_{n-1}(E_{a})\) defines a linear functional 
  on \(K_{a}^{\perp}\) (cf.~Lemma~\ref{lem:compact-period-factor}). By the proof
  of Lemma \ref{lem:compact-period-factor}, there exists a relatvie cycle
  \(\Gamma\in\mathrm{H}_{n}(T,Y_{a})\) such that 
  \begin{equation}
    \int_{\partial\beta}\eta_{c}=\int_{\Gamma} \kappa_{a}([\eta_{c}])
  \end{equation}
  for \(\eta_{c}=\operatorname{Res}_{E_{a}}\omega_{c}\) and every interior lattice point 
  \(c\). Lemma~\ref{lem:interior-derham} implies that \(\beta\) and \(\Gamma\)
  have the same image in \(\mathrm{H}_{n}(U_{a},B_{a})\). This demonstrates the opposite
  inclusion \(W\subset W'\).
\end{proof}

Recall that \(S^{\circ}\) is also stable under the operators
\(D_{\mu_{j},a}\) with \(j=1,\ldots,n+1\). Consider 
the quotient
\begin{equation}
    Q^{\circ}=S^{\circ}\Big/\sum_{j=1}^{n+1}D_{\mu_{j},a}S^{\circ}.
\end{equation}

\begin{lemma}
\label{lem:interior-derham}
There is an isomorphism
\begin{equation}
\label{eq:interiorrelative}
 Q^\circ\cong\mathrm{H}^n(U_{a},B_{a};\mathbf{C}).
\end{equation}
Under the forgetful map to \(\mathrm{H}^n(U_{a};\mathbf{C})\), the class of an
interior monomial \(c=(m,k)\) with \(k>0\), is represented by
\begin{equation}\label{eq:interiorform}
 \omega_c=(-1)^{k-1}\Gamma(k)\frac{x^m\Omega}{f_a^k}.
\end{equation}
\end{lemma}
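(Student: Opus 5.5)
The plan is to repeat the proof of Proposition~\ref{prop:relativederham}, replacing the full semigroup algebra \(S\) by the interior ideal \(S^{\circ}\) and the pair \((T,Y_{a})\) by \((U_{a},B_{a})\). Work on a smooth toric resolution \(X\) (still denoted \(Z\)) with toric boundary \(D=\sum_i D_i\) and \(E=E_{a}\), so that \(D+E\) is simple normal crossing by nondegeneracy. The relative cohomology \(\mathrm{H}^{n}(U_{a},B_{a})\) is computed by the hypercohomology of the complex of logarithmic forms along \(D\) that vanish on \(D\), with arbitrary poles along \(E\):
\begin{equation}
 \Omega^{\bullet}_{Z}(\log D)(-D)(\ast E).
\end{equation}
On a smooth toric variety we have \(\Omega^{p}_{Z}(\log D)(-D)\cong\mathcal{O}_{Z}(-D)\otimes\bigwedge^{p}M_{\mathbf{C}}\).

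The first step identifies global sections. Global sections of \(\mathcal{O}_{Z}(kE-D)\) are spanned by the \(x^{m}\) with \(m\) in the interior of \(k\Delta\), that is, \((m,k)\in C^{\circ}\). Dividing by \(f_a^{k}\) and taking the direct limit gives
\begin{equation}
 P^{\circ}:=\Gamma\bigl(Z,\mathcal{O}_{Z}(-D)(\ast E)\bigr).
\end{equation}
For the vanishing of higher cohomology I would use the Batyrev--Borisov/Danilov vanishing \(\mathrm{H}^{i}(Z,\mathcal{O}(kE-D))=0\) for \(i>0\). This is Serre dual to \(\mathrm{H}^{n-i}(Z,\mathcal{O}(-kE))=0\) for \(i>0\), since \(K_Z=-D\) and \(kE\) is nef and big for \(k\ge1\). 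Granting this vanishing, the hypercohomology is computed by the global complex \((P^{\circ}\otimes\bigwedge^{\bullet}M_{\mathbf{C}},\mathrm{d}_{\log})\).

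The second step is the height elimination. Since \(S^{\circ}\subset S_{+}\) (height \(0\) contains no interior point), I restrict the map \(L\) of \eqref{eq:heightelimination} to \(S^{\circ}\). The same computation shows \(S^{\circ}/D_tS^{\circ}\cong P^{\circ}\). Injectivity of \(D_t\) follows by the leading-term argument, and \(S^{\circ}\) is \(D_t\)-stable because it is an ideal. The intertwining \eqref{eq:heightderivative} then gives
\begin{equation}
 Q^{\circ}=S^{\circ}\Big/\Bigl(D_tS^{\circ}+\sum_j D_jS^{\circ}\Bigr)\cong \mathrm{H}^{n}\bigl(P^{\circ}\otimes\textstyle\bigwedge^{\bullet}M,\mathrm{d}_{\log}\bigr)\cong\mathrm{H}^{n}(U_{a},B_{a}),
\end{equation}
with \([c]\mapsto[\omega_c]\). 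Unlike in Proposition~\ref{prop:relativederham}, there is no height-zero summand \([0]\) to handle, so no extension argument is needed.

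The forgetful map to \(\mathrm{H}^{n}(U_a)\) is induced by the inclusion \(\Omega^{\bullet}(\log D)(-D)(\ast E)\subset\Omega^{\bullet}_{Z}(\ast E)\). Under it, the global section \(L([c])=\omega_c\) is sent to the meromorphic form \eqref{eq:interiorform}.

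The main obstacle is justifying that the twisted complex computes relative cohomology for the pair \((U_a,B_a)\). This requires a comparison theorem for the relative log de Rham complex \(\Omega^{\bullet}(\log D)(-D)\) when \(B_a\) is a normal crossing divisor in the open variety \(U_a\), combined with algebraic meromorphic de Rham along \(E\). I would deduce it from the residue exact sequences
\begin{equation}
 0\to\Omega^{\bullet}(\log D)(-D)\to\Omega^{\bullet}\to\bigoplus_i\Omega^{\bullet}_{D_i}\to\cdots,
\end{equation}
each term twisted by \((\ast E)\). Restricted to \(D_I\setminus E\), each term is algebraic de Rham of a smooth affine-type stratum, since \(D_I\cap E\) is ample on \(D_I\). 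This matches the simplicial resolution computing \(\mathrm{H}^{\bullet}(U_a,B_a)\).

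A secondary point is to confirm the vanishing theorem on the possibly non-MPCP resolution. If needed, I would invoke Kawamata--Viehweg directly for \(\mathcal{O}(K_Z+kE)\), using that \(kE\) is nef and big.
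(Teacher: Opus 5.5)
Your proposal is correct and follows the paper's proof essentially step for step. Both use the relative logarithmic complex with \(\Omega^p_Z(\log D)(-D)(kE)\cong\mathcal{O}_Z(K_Z+kE)\otimes\bigwedge^p M_{\mathbf{C}}\), acyclicity by Kawamata--Viehweg (your Serre-dual Batyrev--Borisov version is equivalent), global sections indexed by the interior lattice points of \(k\Delta\), and elimination of the height operator \(D_t\); your restriction-resolution sketch of the de Rham comparison adds detail that the paper only asserts.

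One aside is wrong but harmless. On a general smooth resolution \(E\cap D_I\) need not be ample, and \(D_I\setminus E\) need not be affine: the exceptional divisor of a blow-up at a torus-fixed point is disjoint from \(E\). You need neither property, since Grothendieck's meromorphic de Rham comparison on each smooth projective stratum already gives the required identification.
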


\begin{proof}
The logarithmic relative de Rham complex 
for the right hand side is
\begin{equation}
 \Omega_{Z}^\bullet(\log D)(-D)(\ast E).
\end{equation}
On \(U\) it is the logarithmic de Rham model for the pair \((U,B)\);
allowing arbitrary poles along \(E\) computes its cohomology on \(Z\).
For a smooth toric variety,
\begin{equation}
 \Omega_{Z}^p(\log D)(-D)(kE)
 \cong\mathcal{O}_Z(K_{Z}+kE)\otimes\bigwedge^p M_{\mathbf{C}}.
\end{equation}
For \(k>0\), applying Kawamata--Viehweg vanishing 
theorem and passing to 
the direct limit, these sheaves
are acyclic in positive degrees, so global sections compute the
hypercohomology of the displayed complex.

On the other hand, the global sections 
of \(\mathcal O_Z(K_Z+kE)\) are indexed by
the interior lattice points of \(k\Delta\).
Indeed, the space of global sections of \(\mathcal O_Z(K_Z+kE)\) is 
the same as that of the model before the toric resolution, 
we thus can compute the polytope there.
On this model, along a toric
divisor with primitive inward normal \(\rho\), the order of
\(x^m\Omega/f_a^k\) is
\begin{equation}
 \langle m,\rho\rangle+k-1\ge0.
\end{equation}
Consequently, the coefficient space of global relative forms is the direct
limit of the interior monomial spaces whose transitions are given by
multiplying \(f_a\).

The proof follows almost the same lines as that of Proposition \ref{prop:relativederham}.
For the sake of completeness, we will give the precise argument below.

We first eliminate the height operator \(D_{t}\)
in the interior twisted complex.
It acts on a monomial \(c=(m,k)\) by
\begin{equation}
 D_t[c]=k[c]+\sum_i a_i[v_i+c].
\end{equation}
This operator is injective. 
Its leading term in height is
multiplication by \(F_a\), which is already injective on \(S^\circ\).
Its cokernel is identified with the above direct limit by
\begin{equation}
 (-1)^{k-1}\Gamma(k)\frac{x^m}{f_a^k}.
\end{equation}
The relation is precisely
\(k(-1)^{k-1}(k-1)!+(-1)^kk!=0\). Conversely, these height
relations generate the direct-limit identifications, so this map
has no futher kernel. 
Finally, for a torus direction \(\mu\), the remaining
operator becomes the ordinary logarithmic derivative
\begin{equation}
 \begin{aligned}
 \left[D_\mu[c]\right]\mapsto~
 &(-1)^{k-1}\Gamma(k)
 \left(\langle\mu,m\rangle\frac{x^m}{f_a^k}
 -k\frac{x^m\theta_\mu f_a}{f_a^{k+1}}\right)\\
 &=\theta_\mu\left((-1)^{k-1}\Gamma(k)
 \frac{x^m}{f_a^k}\right).
 \end{aligned}
\end{equation}
Elimination of the height direction decreases the cohomological
degree by one. The resulting complex is then identified with
the global relative
logarithmic de Rham complex above. This proves
\eqref{eq:interiorrelative} and \eqref{eq:interiorform}.
\end{proof}

\section{Relations to genus-zero orbifold GW invariants}
In this section, we explain how our 
family \(\{\mathcal{B}^{\mathrm{raw}}_{c}(a)\}\) raw \(B\)-series 
is related to orbifold GW invariants for CY hypersurfaces in \(\check{\mathcal{Z}}\).
To switch to the fan point of view, we put \(\rho_{i}=m_{i}\) with \(1\le i\le p\).

To simplify our notation, we denote by \(\check{\Sigma}_{M_{\mathbf{R}}}\)
the projection of \(\check{\Sigma}\) to \(M_{\mathbf{R}}\)
under the natural map
\begin{equation}
  \widetilde{M}\to M;
\end{equation}
it is the defining fan for the toric stack \(\check{\mathcal{Z}}\).
Note that the projectioin induces a bijection between \(\mathrm{Box}(\check{\Sigma}_{M_{\mathbf{R}}})\)
and \(\mathrm{Box}(\check{\Sigma})\). If 
\(b\in\mathrm{Box}(\check{\Sigma}_{\mathbf{R}})\), then we can take a minimal cone \(\sigma\)
containing \(b\) and write
\begin{equation}
  b = \sum_{\rho\in\sigma(1)} \alpha_{\rho}\rho~\mbox{with}~0\le \alpha_{\rho}<1.
\end{equation}
Then for \(v_{\rho}=(\rho,1)\), since \(\check{\mathcal{Z}}\) is Gorenstein, 
we have \(\operatorname{age}(b)=\sum_{\rho\in\sigma(1)}\alpha_{\rho}\in\mathbf{Z}\) and
\begin{equation}
  \sum_{\rho\in\sigma(1)} \alpha_{\rho}v_{\rho}\in \widetilde{M}.
\end{equation}
To avoid unnecessary complications, we shall use the same
notation \(b\) to denote an element in either 
\(\mathrm{Box}(\check{\Sigma}_{M_{\mathbf{R}}})\) 
or \(\mathrm{Box}(\check{\Sigma})\). 

\subsection{\(S_{\mathrm{ext}}\)-extended \(I\)-function}

In \cites{2015-Coates-Corti-Iritani-Tseng-a-mirror-theorem-for-toric-stacks,2019-Coates-Corti-Iritani-Tseng-some-applications-of-the-mirror-theorem-for-toric-stacks}, 
Coates, Corti, Iritani, and Tseng introduced 
the notion of \(S_{\mathrm{ext}}\)-extended \(I\)-function; it
has additional Novikov variables \(x_{1},\ldots,x_{m}\) with \(m=|S_{\mathrm{ext}}|\)
to keep track of box elements. We briefly recall their construction.

Let us take \(S_{\mathrm{ext}}=\mathrm{Box}(\check{\Sigma}_{M_{\mathbf{R}}})=
\{b_{1},\ldots,b_{m}\}\).
The extended fan map 
\(\rho^{S_{\mathrm{ext}}}\colon\mathbf{Z}^{p+m}\rightarrow M\)
is defined by
\begin{equation}
  \begin{cases}
  e_i\mapsto\rho_{i},\\
  e_{p+j}\mapsto b_j,~\mbox{for}~1\le j\le m.
  \end{cases}
\end{equation}
Let \(\mathbb{L}^{S_{\mathrm{ext}}}=\operatorname{Ker}(\rho^{S_{\mathrm{ext}}})\).
For a cone \(\sigma\in\check{\Sigma}_{M_{\mathbf{R}}}\), denote by \(\Lambda_{\sigma}^{S_{\mathrm{ext}}}
\subset \mathbb{L}^{S_{\mathrm{ext}}}\otimes\mathbf{Q}\) the 
subset consisting of elements of the form
\begin{equation}
  \sum_{i=1}^{p+m} \lambda_{i} e_{i}
\end{equation}
such that \(\lambda_{p+j}\in\mathbf{Z}\) for \(1\le j\le m\), and 
\(\lambda_{i}\in\mathbf{Z}\) for \(\rho_{i}\notin\sigma(1)\) with \(1\le i\le p\).
Set \(\Lambda^{S_{\mathrm{ext}}}=\cup_{\sigma\in\check{\Sigma}}\Lambda_{\sigma}^{S_{\mathrm{ext}}}\).
They also introduced the \emph{reduction function}
\begin{equation}
\begin{aligned}
    v^{S_{\mathrm{ext}}}\colon \Lambda^{S_{\mathrm{ext}}}&\to \mathrm{Box}(\check{\Sigma}_{M_{\mathbf{R}}})\\
    \sum_{i=1}^{p+m}\lambda_{i}e_{i}&\mapsto\sum_{i=1}^{p}\lceil\lambda_{i}\rceil\rho_{i}+
    \sum_{j=1}^{m}\lceil\lambda_{p+j}\rceil b_{j}.
\end{aligned}
\end{equation}
Note that \(\overline{v^{S_{\mathrm{ext}}}(\lambda)}=v^{S_{\mathrm{ext}}}(\lambda)\in 
\mathrm{Box}(\check{\Sigma}_{M_{\mathbf{R}}})\) because our lattice \(M\) is torsion free.
Additionally, 
\begin{equation}
  v^{S_{\mathrm{ext}}}(\lambda) = \sum_{i=1}^{p}\lceil\lambda_{i}\rceil\rho_{i}-
  \sum_{i=1}^{p}\lambda_{i}\rho_{i}=\sum_{i=1}^{p}\{-\lambda_{i}\}\rho_{i}
\end{equation}
where \(\{x\}:=x-\lfloor x\rfloor\). For each \(b\in 
\mathrm{Box}(\check{\Sigma}_{M_{\mathbf{R}}})\), we set
\begin{equation}
  \Lambda_{b}^{S_{\mathrm{ext}}}:=\{\lambda \in \Lambda^{S_{\mathrm{ext}}}
  \mid v^{S_{\mathrm{ext}}}(\lambda)=b\}
\end{equation}
and
\begin{equation}
  \Lambda E_{b}^{S_{\mathrm{ext}}}:=\Lambda_{b}^{S_{\mathrm{ext}}}\cap \mathrm{NE}^{S_{\mathrm{ext}}}(\check{\mathcal{Z}}).
\end{equation}
In \cite{2015-Coates-Corti-Iritani-Tseng-a-mirror-theorem-for-toric-stacks}*{Definition~28},
the \(S_{\mathrm{ext}}\)-extended \(I\)-function is introduced.
It is a cohomology-valued series 
\begin{equation}
    \begin{aligned}
    I_{\check{\mathcal{Z}}}^{S_{\mathrm{ext}}}(Q,x;z):=
    &z e^{\sum_{i=1}^{p} D_{i}t_{i}\slash z}\sum_{b\in \mathrm{Box}(\check{\Sigma}_{M_{\mathbf{R}}})}
    \sum_{\lambda=(d,k)\in \Lambda E_{b}^{S_{\mathrm{ext}}}}
    Q^{d}x^{k}e^{dt}\\
    &\qquad \times\prod_{i=1}^{p+m}\frac{\prod_{\{a\}=\{\lambda_{i}\},~a\le 0}(D_{i}+az)}{\prod_{\{a\}=\{\lambda_{i}\},~a\le \lambda_{i}}(D_{i}+az)}\,\mathbf{1}_{b}
    \end{aligned}
\end{equation}
\begin{enumerate}
    \item \(Q^{d}\) are Novikov variables;
    \item \(x_{1},\ldots,x_{m}\) are auxiliary variables for box elements;
    \item \(t_{1},\ldots,t_{p}\) are divisor variables.
    \item \(e^{dt}:=\prod_{i=1}^{p} e^{\langle d, D_{i}\rangle t_{i}}\);
    \item \(\mathbf{1}_{b}\) is the generator for the corresponding sector.
    \item \(D_{p+j}=0\) for \(j=1,\ldots,m\).
\end{enumerate}
Notice that the \(i\)\textsuperscript{th} component \(\lambda_{i}\) in \(\lambda
\in \Lambda E_{b}^{S_{\mathrm{ext}}}\)
has the property that \(\{\lambda_{i}\}=\{-b_{i}\}\). To be precise, we 
recall that 
the notation \(\lambda=(d,k)\) means the equality 
\begin{equation}
    \lambda=d+\sum_{j=1}^m k_j\left(e_{p+j}-\sum_{i=1}^p \alpha_{i}(b_{j})e_i\right),
\end{equation}
where the scalars \(\alpha_{i}(b_{j})\) satisfy
\begin{equation}
    b_j=\sum_{i=1}^p \alpha_{i}(b_{j})\rho_{i}
\end{equation}
using its minimal cone.
Also observe that 
\begin{equation}
  \sum_{i=1}^{p}\lambda_{i}+\sum_{j=1}^{m}k_{j}\operatorname{age}(b_{j})=\langle d, e\rangle
\end{equation}
where \(e=\sum_{i=1}^{p}D_{i}\).

To compute genus-zero orbifold GW invariants, we introduce the 
hypersurface series
\begin{equation}
\label{eq:i-hyper-sf}
\begin{aligned}
\mathcal{I}^{S_{\mathrm{ext}}}_{\mathrm{hyp}}&=z e^{\sum_iD_it_i/z}
\sum_{\lambda=(d,k)}
Q^d x^k e^{dt}
\left[\prod_{i=1}^{p+m}\frac{\prod_{\{a\}=\{\lambda_i\},\,a\leq0}(D_i+az)}
{\prod_{\{a\}=\{\lambda_i\},\,a\leq\lambda_i}(D_i+az)}\right]\\
&\qquad \qquad \qquad \qquad \qquad 
\times\prod_{r=1}^{\langle d, e\rangle}(e+rz)\,\mathbf{1}_{v^{S_{\mathrm{ext}}}(\lambda)}.
\end{aligned}
\end{equation}
For the hypersurface inclusion \(i\colon \check{Y}\to\check{\mathcal{Z}}\), we have
\begin{equation}
  I_{\check{Y}}^{S_{\mathrm{ext}}}=i^{\ast}\mathcal{I}^{S_{\mathrm{ext}}}_{\mathrm{hyp}},~\mbox{and}~
  i_{\ast}I_{\check{Y}}^{S_{\mathrm{ext}}}=e\cup\mathcal{I}^{S_{\mathrm{ext}}}_{\mathrm{hyp}}
\end{equation}
The goal is to match the series with \(\mathcal{B}_{c}^{\mathrm{raw}}(a)\).

\subsection{Comparison with \(\mathcal{B}^{\mathrm{raw}}_{c}(a)\)}
First of all, we match the summand described in \eqref{eq:i-hyper-sf}. 
Let \(\lambda=(d,k)\in \Lambda E_{b}^{S_{\mathrm{ext}}}\).
Define \(\ell=(\ell_{i})\) via
\begin{equation}
    \ell_{i}=\lambda_{i},~\mbox{for}~1\le i\le p,~\mbox{and}~\ell_{0}=-1-\sum_{i=1}^{p}\lambda_{i}
    -\sum_{j=1}^{m}k_{j}\operatorname{age}(b_{j}).
\end{equation}
Then for \(v_{i}=(\rho_{i},1)\) and \(v_{0}=(\mathbf{0},1)\) we have
\begin{equation}
\begin{aligned}
    \sum_{i=0}^{p} \ell_{i}v_{i}&=\left(\sum_{i=1}^{p}\lambda_{i} \rho_{i},-1-\sum_{j=1}^{m}k_{j}
    \operatorname{age}(b_{j})\right)\\
    &=-\sum_{j=1}^{m} k_{j}\tilde{b}_{j}-\delta.
\end{aligned}
\end{equation}
If we put \(c=\sum_{j=1}^{m} k_{j}\tilde{b}_{j}+\delta\), then \(\ell_{i}+b_{i}\in\mathbf{Z}\)
and we have \(A\ell = -c\). Each element in \(\lambda\in \Lambda E_{b}^{S_{\mathrm{ext}}}\)
corresponds to an element in \(L_{c,b}\).

Conversely, for each \(c=\sum_{j=1}^{m} k_{j}\tilde{b}_{j}+\delta\) and \(b\in
\mathrm{Box}(\check{\Sigma}_{M_{\mathbf{R}}})\), we take 
\(\ell\in L_{c,b}\) such that the corresponding term
\begin{equation}
  \prod_{i=0}^{p}\frac{a_i^{\ell_i+D_i}}{\Gamma(1+\ell_i+D_i)}~\mathbf{1}_b
\end{equation}
is nonzero. Then
\begin{equation}
  \ell_{i}+\alpha_{i}(b)\in\mathbf{Z}~\mbox{and}~\sum_{i=0}^{p} \ell_{i}v_{i} = -c.
\end{equation}
Set \(\lambda_{i}=\ell_{i}\) for \(1\le i\le p\). We observe that \((d,k)
\in \Lambda E_{b}^{S_{\mathrm{ext}}}\), where
\begin{equation}
  d_{i} = \lambda_{i}+\sum_{j=1}^{m} k_{j}\alpha_{i}(b_{j}).
\end{equation}
Moreover, by the support condition in \(\mathcal{B}_{c}^{\mathrm{raw}}(a)\), an
element \(\ell\in L_{c,b}\) contributes to a nonzero term to the sector \(\mathbf{1}_{b}\)
only if there exists a cone \(\sigma\) containing \(b\) and 
all rays \(\rho_{i}\) with \(\lambda_{i}=\ell_{i}\in\mathbf{Z}_{<0}\). Therefore 
\(\lambda_{i}=\ell_{i}\in\mathbf{Z}_{\ge 0}\) for \(\rho_{i}\notin\sigma(1)\).
For a numerically effective Cartier divisor \(H\), we can choose 
a representative \(H=\sum_{i=1}^{p} r_{i}D_{i}\) with
\(r_{i}\ge 0\) and \(r_{i}=0\) if \(\rho_{i}\in \sigma(1)\). 
Compute the intersection number
\begin{equation}
  \langle H,d\rangle=\sum_{i=1}^{p} r_{i}\lambda_{i}+\sum_{j=1}^{m}\sum_{i=1}^{p} r_{i} k_{j}\alpha_{i}(b_{j})\ge 0.
\end{equation}
Thus \(d\in\operatorname{NE}(\check{\mathcal{Z}})\) represents a curve class.

Now we match the monomials. For \(k=(k_{1},\ldots,k_{m})\),
we introduce
\begin{equation}
M_k(a):=\prod_{i=1}^{p}a_i^{\sum_{j}k_{j}\alpha_{i}(b_j)}.
\end{equation}
Then
\begin{equation}
a_0M_k(a)a^{\ell+D}=a^D\,a_0^{-\langle e,d\rangle}\prod_{i=1}^{p}a_i^{d_i}.
\end{equation}
After matching those variables, i.e. 
\begin{equation}
  t_i=\log a_i-\log a_0,~\mbox{and}~Q^d\mapsto 1,
\end{equation}
on the chosen branches and convergence domain, we obtain
\begin{equation}
  e^{\sum_{i}D_it_i}=a^D,~\mbox{and}~e^{dt}=a_0^{-N}\prod_{i} a_i^{d_i}.
\end{equation}
Define
\begin{equation}
  I_{\check Y,k}=\left.\partial_{x_1}^{k_1}\cdots\partial_{x_m}^{k_m}I_{\check{Y}}^{S_{\mathrm{ext}}}
  \right|_{x=0,z=1}.
\end{equation}
Define an operator \(\mathsf{C}\colon\mathrm{H}_{\mathrm{CR}}^{\bullet}(\check{\mathcal{Z}};\mathbf{C})
\to \mathrm{H}_{\mathrm{CR}}^{\bullet}(\check{\mathcal{Z}};\mathbf{C})\) sectorwisely by
\begin{equation}
  \left.\mathsf{C}\right|_{b}:=\Gamma(1-e)\prod_{i=1}^{p}\Gamma(1-\alpha_{i}(b)+D_{i})\cup -.
\end{equation}
It is a multiplication on each sector. Then 
\begin{equation}
  \begin{aligned}
    \mathsf{C}\left(\mathcal{B}_{c}^{\mathrm{raw}}(a)\right)=\sum_{b\in\operatorname{Box}(\check{\Sigma})}
    \sum_{\ell\in L_{c,b}}\frac{\Gamma(1-e)}{\Gamma(1+\ell_{0}-e)}\prod_{i>0}\frac{\Gamma(1-\alpha(b)_{i}+D_{i})}{\Gamma(1+\ell_{i}+D_{i})} a^{\ell+D} \mathbf{1}_{b}.
  \end{aligned}
\end{equation}
We can calculate for \(\ell_{0}\le 0\), 
\begin{equation}
  \begin{aligned}
    \frac{\Gamma(1-e)}{\Gamma(1+\ell_{0}-e)}&=(-e)(-e-1)\cdots(-e+1+\ell_{0})\\
    &=(-1)^{\ell_{0}}\prod_{a=0}^{-1-\ell_{0}}(e+a)\\
    &=(-1)^{1+\langle d,e\rangle}\prod_{a=0}^{\langle d,e\rangle}(e+a).
  \end{aligned}
\end{equation}
The resulting formula is
\begin{equation}
  \mathsf{C}\left(a_0 M_k(a)B_{c_k}^{\mathrm{raw}}\right)=-\mathsf{S}\left(i_{\ast}I_{\check Y,k}\right),
\end{equation}
where \(c_{k}=\delta+\sum_{i=1}^{m}c_{i}\tilde{b}_{i}\) and 
\begin{equation}
  \mathsf S(Q^d)=(-1)^{\langle d,e\rangle}Q^d.
\end{equation}
\begin{theorem}
\label{thm:extended-i-and-periods}
Under the mirror change of variables and an invertible sectorwise normalization \(\mathsf{C}\), 
the pushforward of the extended \(I\)-function for 
CY hypersurfaces \(\check{Y}\subset\check{\mathcal{Z}}\)
coincides with the generating series of compact periods for \(Y\subset \mathcal{Z}\).
\end{theorem}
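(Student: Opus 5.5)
The proof combines two facts already available in the paper: the termwise identity
\begin{equation*}
  \mathsf{C}\bigl(a_0M_k(a)\mathcal{B}^{\mathrm{raw}}_{c_k}\bigr)=-\mathsf{S}\bigl(i_\ast I_{\check Y,k}\bigr),
\end{equation*}
derived just before the statement, and the period identification of Proposition~\ref{prop:rawcompactequality} combined with Lemma~\ref{lem:support-complete}.

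The first step makes the termwise identity rigorous. The matching of summation sets is the map \(\lambda=(d,k)\in\Lambda E_b^{S_{\mathrm{ext}}}\mapsto\ell\in L_{c_k,b}\), with \(\ell_i=\lambda_i\) for \(i>0\) and \(\ell_0=-1-\langle d,e\rangle\). I would check three points.
\begin{itemize}
  \item The map is a bijection onto the nonzero summands. Surjectivity uses the support condition together with the nef-divisor argument showing \(d\in\operatorname{NE}\). Injectivity is immediate.
  \item The Gamma quotients agree. For each \(i>0\), \(\Gamma(1-\alpha_i(b)+D_i)/\Gamma(1+\ell_i+D_i)\) equals the \(I\)-function factor \(\prod_{a\le0}/\prod_{a\le\lambda_i}\) at \(z=1\). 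This holds because \(\{\lambda_i\}=\{-\alpha_i(b)\}\), so both products telescope over the same residue class. The central factor gives \((-1)^{1+\langle d,e\rangle}\prod_{r=0}^{\langle d,e\rangle}(e+r)\), which is \(e\) (the pushforward \(i_\ast\)) times the hypersurface factor \(\prod_{r=1}^{\langle d,e\rangle}(e+r)\), up to the sign absorbed by \(\mathsf S\).
  \item The monomials agree under \(t_i=\log a_i-\log a_0\) and \(Q^d\mapsto1\), with \(x^k\) extracted by \(\partial_x^k|_{x=0}\). This uses Lemma~\ref{lem:centralbox} to ensure \(\ell_0\le -1\), so the products are finite.
\end{itemize}

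The second step converts the identity into a statement about periods. By Lemma~\ref{lem:support-complete} and Proposition~\ref{prop:raw-period-identification}, for each constant functional \(\lambda\) the family \(\{\lambda(\mathcal B^{\mathrm{raw}}_c)\}\) is the family of integrals \(\int_{\partial\Gamma_\lambda(a)}\operatorname{Res}_{Y_a}\omega_c\). For the interior indices \(c_k=\delta+\sum_j k_j\tilde b_j\), Proposition~\ref{prop:rawcompactequality} shows these integrals span exactly the compact periods \(\int_{\beta_a}\eta_{c_k}\). Summing over \(k\) with weights \(x^k/k!\) and sectors \(\mathbf 1_b\), the generating series of compact periods is then \(\sum_k \frac{x^k}{k!}a_0M_k(a)\,\mathcal{B}^{\mathrm{raw}}_{c_k}(a)\), regarded as an \(R\)-valued function; pairing with \(\lambda\) gives the period of \(\Gamma_\lambda\). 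Applying the invertible operator \(\mathsf C\) and the sign twist \(\mathsf S\) then identifies this series with \(-i_\ast I^{S_{\mathrm{ext}}}_{\check Y}|_{z=1}\).

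The main obstacle is the bookkeeping in the first step, together with convergence. On the bookkeeping side, one must verify that \(\mathsf C\) is well defined and invertible on each sector. This holds because \(\Gamma(1-e)\) and \(\Gamma(1-\alpha_i(b)+D_i)\) are invertible, since \(e\) and \(D_i\) are nilpotent and \(1-\alpha_i>0\). One must also check that the prefactor \(a_0M_k(a)\) is a nonvanishing holomorphic normalization that does not change the span of periods.

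For convergence, I would invoke Horja's chamber estimates for each \(c_k\) and restrict to a common polyannulus domain. The branch choices of \(\log a_i\) must match the factor \(e^{\sum D_it_i}\).

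The statement asserts agreement of series rather than equality of spans, so no further dimension count is needed beyond the completeness already established.
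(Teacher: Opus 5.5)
Your proposal is correct and follows essentially the same route as the paper. The common steps are: the termwise matching of \(\Lambda E_b^{S_{\mathrm{ext}}}\) with the nonzero summands indexed by \(L_{c_k,b}\) (via the support condition and the nef-divisor argument); the Gamma and monomial comparison giving \(\mathsf{C}\bigl(a_0M_k(a)\mathcal{B}^{\mathrm{raw}}_{c_k}\bigr)=-\mathsf{S}\bigl(i_\ast I_{\check Y,k}\bigr)\); and the period interpretation of the raw components through Proposition~\ref{prop:raw-period-identification}, Lemma~\ref{lem:support-complete} and Proposition~\ref{prop:rawcompactequality}. One point is worth stating explicitly. The flat functional constructed in Lemma~\ref{lem:compact-period-factor} depends only on the compact cycle \(\beta_a\), so a single \(\lambda\) represents its periods for all interior indices \(c_k\) at once; this is what justifies your claim about the whole generating series rather than each coefficient separately.
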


\section{More on the interpolation complex \(\mathcal{K}^{\bullet}\)}
In \S\ref{sec:VHS-complex}, we introduced an interpolation complex
\(\mathcal{K}^{\bullet}\) to connect relevant objects.
Indeed, \(\mathcal{K}^{\bullet}\) is equipped with a 
flat connection outside \(s=0\);
it comes from the pullback of the Gauss--Manin connection,
which we shall explain now.

\subsection{Gauss--Manin connection}
We introduced a fiberwise isomorphism 
\begin{equation}
  T_{b}\colon \mathcal{S}_{b}\cong S,\quad [c]\mapsto b^{\psi(c)}[c]
\end{equation} 
for \(b\ne 0\). Then
\begin{equation}
  T_{b} \circ D_{\mu,b}\circ T_{b}^{-1} = b D_{\mu,a(b)}
\end{equation}
where \(a_{i}(s):=a_{i}s^{\psi(v_{i})-1}\).
Thus if \(b\ne 0\), we can invert \(b\) and the complex
is isomorphic to the pullback of the geometric twisted 
complex. At the complex level, the collection \(\{b^{-p}T_{b}\}_{p}\)
gives the desired isomorphism of chain complexes. 
This identification induces a Gauss--Manin connection along the base 
\begin{equation}
\begin{aligned}
  \nabla_{\partial_{s}}^{\mathrm{GM}} &= \partial_{s} + \sum_{i} a'_{i}(s)[v_{i}]\cdot~\\
  &=\partial_{s} + \sum_{i} a_{i}(\psi(v_{i})-1)s^{\psi(v_{i})-2}[v_{i}]\cdot~,
\end{aligned}
\end{equation}
where the product is taken in \(S\).
Explicitly, we have
\begin{equation}
    \nabla_{\partial_{s}}^{\mathrm{GM}} 
    = \partial_{s} + \frac{\Psi}{s} - \frac{\ell\cdot \operatorname{id}}{s}+ 
    \frac{1}{s^{2}}\sum_{i} a_{i}(\psi(v_{i})-1)M_{[v_{i}]}^{(s)}.
\end{equation}
Here \(\Psi([c])=\psi(c)[c]\) is the logarithmic product.
The connection \(\nabla^{\mathrm{GM}}\)
allows us to transport the integral lattice
\begin{equation}
  \mathrm{H}^{n}(T,Y_{a};\mathbf{Z})
  \slash\mathrm{tor}\subset\mathrm{H}^{n}(T,Y_{a};\mathbf{C})
\end{equation}
to a nearby fiber around \(s=0\).

Consider an ample toric divisor \(D=\sum_{i=1}^{p} c_{i}D_{i}\) with \(c_{i}>0\)
on the coarse moduli space \(\check{Z}\).
It determines a piecewise-linear support 
function \(\varphi_{D}\) on the support of its fan.
Explicitly, if \(\{n_{\sigma}\}_{\sigma\in\check{Z}}\) is the Cartier data, then 
it is given by
\begin{equation}
  \varphi_{D}(m)=-\langle n_{\sigma}, m\rangle~\mbox{for}~n\in \sigma.
\end{equation}
Therefore, if \(m_{j}\) is a primitive generator of a \(1\)-cone in \(\sigma\), then
\begin{equation}
  \varphi_{D}(m_{j})=-\langle n_{\sigma}, m_{j}\rangle=c_{j}.
\end{equation}
Note that our convention for convexity is different from the one used in \cite{2011-Cox-Little-Schenck-toric-varieties}.
Recall that \(\check{\Sigma}\) is a star triangularization obtained by lifting cones 
in the defining fan of \(\check{Z}\).
We can define a convex function \(\psi_{D}\) on \(\check{\Sigma}\) 
using the Cartier data \(\{(n_{\sigma},1)\}\).
Explicitly, if \((m,k)\) belongs to a cone in \(\check{\Sigma}\) whose
projection to \(M_{\mathbf{R}}\) lies in \(\sigma\), then
\begin{equation}
  \psi_{D}(m,k):=\langle (-n_{\sigma},1),(m,k)\rangle = \varphi_{D}(m)+k.
\end{equation}
One can easily check
\begin{eqnarray}
  \psi_{D}(v_{i})=\varphi_{D}(m_{i})+1 = c_{i}+1~\mbox{for}~1\le i\le p,
\end{eqnarray}
and \(\psi_{D}(v_{0})=1\). Therefore 
\begin{equation}
\begin{cases}
    a_{i}(s)=a_{i}s^{\psi(v_{i})-1}=a_{i}s^{c_{i}},~\mbox{for}~1\le i\le p,\\
    a_{0}(s)= a_{0}.
\end{cases}
\end{equation}
When \(s\to 0\), we are literally approaching the LCSL point.

\begin{proposition}
The connection has a regular singulary at \(s=0\).
\end{proposition}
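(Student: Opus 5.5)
The plan is to prove regularity by producing a \(\mathbf{C}\{s\}\)-lattice in \(\mathcal{H}\), namely \(\mathcal{H}\) itself over a neighbourhood of \(s=0\), on which \(s\nabla^{\mathrm{GM}}_{\partial_s}\) acts holomorphically. This gives a logarithmic pole at \(s=0\), hence regular singularity by Deligne's criterion. By Theorem~\ref{thm:comparison}, \(\mathcal{H}=\mathrm{H}^{\ell}(\mathcal{K}_A^\bullet)\) is finite free over \(A\) with \(g(0)\ne0\), and its fibre at \(s=0\) is \(\mathrm{H}^\bullet_{\mathrm{CR}}(\check{\mathcal Z};\mathbf{C})\). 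So \(\mathcal{H}\) is a natural candidate lattice, with a basis given by monomial classes \([c_1],\dots,[c_r]\) adapted to the height filtration.

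First I will compute the operator in the displayed formula on the \(s\)-family side. Using \(T_s\colon\mathcal{S}_s\to S\), I will transport \(\nabla^{\mathrm{GM}}_{\partial_s}\) to an operator on \(\mathcal{S}_A\otimes_A A[s^{-1}]\). The displayed expression then gives
\begin{equation*}
 s\nabla^{\mathrm{GM}}_{\partial_s}=s\partial_s+\Psi-\ell\cdot\mathrm{id}+\frac1s\sum_i a_i(\psi(v_i)-1)M^{(s)}_{[v_i]}.
\end{equation*}
The first three terms preserve \(\mathcal{S}_A\) and commute with the differential up to terms already in \(\mathcal{S}_A\). Indeed, \([\Psi,D_{\mu,s}]\) and \([s\partial_s,D_{\mu,s}]\) are computed from \(b(c,c')\) and \(s\theta_\mu\), and they stay in \(\mathcal{S}_A\). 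Hence these terms descend to \(A\)-linear-plus-derivation endomorphisms of \(\mathcal{H}\).

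The whole difficulty is the \(1/s\) term, which I expect to be the main obstacle. I will handle it using the relations in \(\mathcal{H}\). The key point is that, because of the choice \(a_i(s)=a_is^{\psi(v_i)-1}\) coming from \(\psi=\psi_D\), the combination \(\sum_i a_i(\psi(v_i)-1)[v_i]\) equals \(G_{\mu_D,a}\) for the linear functional \(\mu_D\) determined by the Cartier data, up to a piecewise correction. On each maximal cone, \(\psi-1\) coincides with the linear function \(\langle(-n_\sigma,1),\cdot\rangle-1\) on rays. So I aim to write
\begin{equation*}
 \sum_i a_i(\psi(v_i)-1)[v_i]\ast_s[c]=G_{\mu_\sigma}\ast_s[c]+s\,(\cdots)
\end{equation*}
for \([c]\) in the cone \(\sigma\). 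The error carries a factor \(s^{b(v_i,c)}\) with \(b\ge1\) precisely when \(v_i\) and \(c\) do not share a cone. Modulo the image of \(D_{\mu_\sigma,s}=s\theta_{\mu_\sigma}+M^{(s)}_{G_{\mu_\sigma}}\), the term \(G_{\mu_\sigma}\ast_s[c]\) equals \(-s\langle\mu_\sigma,c\rangle[c]\). Thus the whole \(1/s\) term becomes holomorphic on \(\mathcal{H}\).

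If this cone-by-cone reduction does not globalize, my fallback is the filtration argument. Using strictness from the proof of Theorem~\ref{thm:comparison}, I will show that the residue action on \(\operatorname{gr}\mathcal{H}\cong\mathcal{R}_A\) is multiplication by the class of \(\sum_i a_i(\psi(v_i)-1)[v_i]\). This class is nilpotent, since it is a divisor class, namely \(D\), in \(\mathrm{H}^\bullet_{\mathrm{CR}}\). The associated operator on \(\mathcal{H}\) therefore has at worst a pole controlled by the finite length \(n\) of the filtration. A shearing transformation by powers of \(s\) along the height grading then yields a logarithmic lattice. In either route, finiteness of the pole order on a free lattice gives moderate growth of flat sections, hence regular singularity at \(s=0\).
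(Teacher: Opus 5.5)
Your transported formula for \(s\nabla^{\mathrm{GM}}_{\partial_s}\) is correct, but your primary route fails, because the lattice \(\mathcal{H}_{\mathcal O}\) spanned by the classes \(\bar g\), \(g\in\mathcal{S}_{\mathcal O}\), is not logarithmic. By Theorem~\ref{thm:comparison}(3), \(\mathcal{H}_{\mathcal O}/s\mathcal{H}_{\mathcal O}\cong S_{\check\Sigma}/(G_{\mu,a})\cong\mathrm{H}^\bullet_{\mathrm{CR}}(\check{\mathcal Z};\mathbf{C})\) via \(z_i\mapsto D_i\). Since \(\psi(v_0)-1=0\) and \(\psi(v_i)-1=c_i\) for \(i\ge1\), the class of \(\sum_i a_i(\psi(v_i)-1)[v_i]\ast_s g\) modulo \(s\) is \(D\cup\bar g\). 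So the polar part of \(s\nabla^{\mathrm{GM}}_{\partial_s}\) on \(\mathcal{H}_{\mathcal O}\) is exactly \(D\cup-\), and this is nonzero: for \(g=1\) it gives the ample class \(D\ne0\). For instance, for \(\check Z=\mathbf{P}^2\) and \(D=D_1+D_2+D_3\), the relation \(\overline{D_{t,s}(1)}=0\) gives \(s\nabla^{\mathrm{GM}}_{\partial_s}\bar1=-3\,\bar1-s^{-1}\bar z_0\), with \(\bar z_0\mapsto-3h\ne0\) at \(s=0\).

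Your cone-by-cone reduction breaks exactly at such \(c\). If \(c\) lies in several maximal cones (e.g.\ \(c=0\)), rays \(v_i\notin\sigma\) still share a cone with \(c\), so \([v_i]\ast_s[c]\) carries no power of \(s\). The correction \(\sum_{v_i\notin\sigma}a_i\bigl(\psi(v_i)-1-\langle\mu_\sigma,v_i\rangle\bigr)[v_i]\ast_s[c]\) is then not \(O(s)\); its class at \(s=0\) is precisely \(D\cup[c]\), since you have only rewritten \(D\) as a divisor supported off \(\sigma\). A related slip: \(s\partial_s+\Psi-\ell\) does not descend to \(\mathcal{H}\) by itself. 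Indeed \([s\partial_s+\Psi-\ell,D_{\mu,s}]=D_{\mu,s}+\sum_ia_i(\psi(v_i)-1)\langle\mu,v_i\rangle M^{(s)}_{[v_i]}\), and the extra term is cancelled only by the commutator with the \(1/s\) term.

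Your fallback, shearing by powers of \(s\) along the height grading, is the paper's actual route (the frame \(e_\alpha=s^{-d_\alpha}\bar g_\alpha\)), but the reasons you give do not establish it. A nilpotent polar part plus a finite filtration does not force regularity. Take \(\theta e_1=e_2\), \(\theta e_2=s^{-1}e_1\) with \(\theta=\nabla_{s\partial_s}\): the polar part is nilpotent, yet the cyclic vector satisfies \((\theta^2-s^{-1})e_1=0\), which is irregular by Fuchs' criterion. Also, finite pole order on some free lattice holds for every meromorphic connection, so it gives no growth bound.

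The missing ingredient is the lemma in the paper's proof: for homogeneous \(g\in\mathcal{S}_{\mathcal O}\) of degree \(k\), one has \(s^{-k}\bar g\in\bigoplus_\alpha\mathcal{O}e_\alpha\). It is proved by induction on \(k\). Freeness of \((\mathcal{R}_{\mathcal O})_k\) gives \(g=\sum_{d_\beta=k}c_\beta(s)g_\beta+\sum_jG_j\ast_sg_j\) with \(\deg g_j=k-1\). Then \(\overline{G_j\ast_sg_j}=\overline{D_{\mu_j,s}g_j}-s\,\overline{\theta_jg_j}=-s\,\overline{\theta_jg_j}\), so each drop in degree costs exactly one factor of \(s\). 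This comes from the shape \(D_{\mu,s}=s\theta_\mu+M^{(s)}_{G_{\mu,a}}\), not from nilpotency. Now
\begin{equation*}
\nabla_{s\partial_s}e_\alpha=s^{-d_\alpha}\,\overline{(s\partial_s+\Psi-d_\alpha-\ell)g_\alpha}+s^{-d_\alpha-1}\,\overline{\textstyle\sum_ia_i(\psi(v_i)-1)[v_i]\ast_sg_\alpha}
\end{equation*}
is a sum of terms \(s^{-k}\bar g\) with \(g\) homogeneous of degree \(k\), namely \(k=d_\alpha\) and \(k=d_\alpha+1\). The lemma therefore shows that \(\{e_\alpha\}\) is a logarithmic frame, and only then does Deligne's criterion apply. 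Without this lemma your fallback remains a heuristic.
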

\begin{proof}
  Although this follows from a gerenral theory of
  Gauss--Manin connections for family of smooth algebraic varieties,
  we would like to give a direct and computational proof, for we
  will compare the residue connection with the cup product on the \(A\)-side.

  Recall that \(\mathcal{H}:=\mathrm{H}^{\ell}(\mathcal{K}^{\bullet}_{A})\).
  We work over the analytic local ring \(\mathcal{O}:=\mathbf{C}\{s\}\).
  Let \(\mathcal{H}_{\mathcal{O}}=\mathrm{H}^{\ell}(\mathcal{K}^{\bullet}_{\mathcal{O}})\).
  By Theorem \ref{thm:comparison}, we
  choose homogeneous representatives \(g_{\alpha}\) of degree \(d_{\alpha}\) whose
  classes form a graded \(\mathcal{O}\)-basis for \(\mathcal{R}_{\mathcal{O}}\). They also
  form a \(\mathcal{O}\)-basis for \(\mathcal{H}_{\mathcal{O}}\).

  Set \(e_{\alpha}:=s^{-d_{\alpha}}\bar{g}_{\alpha}\), where \(\bar{g}_{\alpha}\)
  is the image of \(g_{\alpha}\) in \(\mathcal{H}_{\mathcal{O}}\). We are going to show that 
  \(\{e_{\alpha}\}\) is the needed logarithmic frame for the meromorphic bundle 
  \(\mathcal{H}_{\mathcal{O}}[s^{-1}]\). Compute
  \begin{equation}
    \begin{aligned}
      \nabla_{s\partial_{s}}^{\mathrm{GM}}(e_{\alpha})&=s^{-d_{\alpha}}\left(
      \overline{s\partial_{s}g_{\alpha}
      +(\Psi-d_{\alpha}-\ell)g_{\alpha}}\right)\\
      &\qquad +s^{-d_{\alpha}-1}\left(\overline{
      \textstyle \sum_{i} a_{i}(\psi(v_{i})-1)[v_{i}]\ast_{s} g_{\alpha}}\right).
    \end{aligned}
  \end{equation}
  The element before taking its class 
  in the first parenthesis has homogeneous degree \(d_{\alpha}\). We can do 
  the following induction.
  For a homogeneous element \(g\in S_{\mathcal{O}}\) of degree \(k\),
  there are \(c_\beta(s)\in\mathcal O\) and homogeneous \(g_j\) of degree \(k-1\) such that
  \begin{equation}
    g=\sum_{d_\beta=k}c_\beta(s)g_\beta+\sum_{j}G_{j}\ast_{s} g_{j}.
  \end{equation}
  Passing to twisted cohomology \(\mathcal{H}_{\mathcal{O}}\) gives
  \begin{equation}
    \bar g=\sum_{d_\beta=k}c_\beta(s)\bar g_\beta-s~\overline{\sum_{j}\theta_{j}g_{j}}.
  \end{equation}
  The expression inside the last bar has degree \(k-1\). 
  Equivalently, 
 \begin{equation}
    s^{-k}\bar g=s^{-k}\sum_{d_\beta=k}c_\beta(s)\bar g_\beta-s^{-(k-1)}~\overline{\sum_{j}\theta_{j}g_{j}}.
  \end{equation} 
  Repeating the process shows that \(s^{-k}\bar{g}\) can be expressed as a
  linear combination of \(e_{\alpha}\).
  The same argument applies to the latter term.
  This implies that 
  \(\nabla_{s\partial_{s}}^{\mathrm{GM}}(e_{\alpha})\) can be expressed as 
  a linear combination of \(\{e_{\alpha}\}\) over \(\mathcal{O}\) as desired.
\end{proof}

Let \(\mathcal{E}:=\bigoplus_{\alpha} 
\mathcal{O}\varepsilon_{\alpha}\subset \mathcal{H}[s^{-1}]\) 
where \(\varepsilon_{\alpha}=s^{\ell}e_{\alpha}\)
be the shift of the logarithmic lattice.
The residue connection \(\tau\)
becomes an endomorphism on \(\mathcal{E}\slash s\mathcal{E}\).
To make the presentation more concise, define 
\begin{equation}
  \operatorname{H}\colon \mathcal{S}_{\mathcal{O}}\to \mathcal{E}
  \quad\mbox{via}\quad g\mapsto s^{\ell-\deg g} \bar{g}
\end{equation}
for any homogeneous element \(g\in\mathcal{S}_{\mathcal{O}}\). We can compute
the residue connection explicitly. 
Then the proof shows that 
\begin{equation}
  \begin{aligned}
    \nabla_{\theta_{s}} \operatorname{H}(g) &= 
    \operatorname{H}(\theta_{s} g +\Psi g-(\deg g)g)\\
    &\qquad +\operatorname{H}(\textstyle\sum_{i} a_{i}(\psi(v_{i})-1)[v_{i}]\ast_{s} g).
  \end{aligned}
\end{equation}
Specializing the right hand side to \(s=0\), we 
obtain a formula for residue connection.
\begin{proposition}
The residue connection \(\tau\) is given by
\begin{equation}
  \Psi - \deg + \sum_{i} a_{i}(\psi(v_{i})-1)[v_{i}]\ast_{\check{\Sigma}}\colon 
  \mathcal{E}\slash s\mathcal{E}\to \mathcal{E}\slash s\mathcal{E}.
\end{equation}
\end{proposition}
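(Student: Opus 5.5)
The plan is to specialize the formula for \(\nabla_{\theta_s}\operatorname{H}(g)\) displayed just before the statement to \(s=0\), and to check that each term descends to a well-defined endomorphism of \(\mathcal{E}/s\mathcal{E}\).

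First I will identify \(\mathcal{E}/s\mathcal{E}\). By construction \(\operatorname{H}(g_\alpha)=\varepsilon_\alpha\) for the chosen homogeneous basis \(g_\alpha\) of \(\mathcal{R}_{\mathcal O}\). The reduction argument in the proof of the preceding proposition writes a homogeneous \(g\) of degree \(k\) as \(\sum c_\beta(s)g_\beta+\sum_j G_j\ast_s g_j\). It then gives
\begin{equation}
 \operatorname{H}(g)=\sum_{d_\beta=k}c_\beta(s)\varepsilon_\beta - s\,\operatorname{H}\Bigl(\textstyle\sum_j\theta_j g_j\Bigr).
\end{equation}
Hence modulo \(s\mathcal{E}\) the map \(\operatorname{H}\) factors through \(\mathcal{S}_{\mathcal O}\to \mathcal{S}_0=S_{\check\Sigma}\to \mathcal{R}_0=S_{\check\Sigma}/(G_{j})\). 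This yields an isomorphism \(\mathcal{E}/s\mathcal{E}\cong \mathcal{R}_0\), which by \eqref{eq:toric} is \(\mathrm{H}^\bullet_{\mathrm{CR}}(\check{\mathcal Z};\mathbf{C})\), and under it \(\operatorname{H}(g)\bmod s\) is the class of \(g|_{s=0}\). The key point to verify is that the correction terms are \(s\) times an element of \(\mathcal{E}\). This holds because the recursion terminates after finitely many degree drops and every step carries a factor of \(s\).

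Next I will apply the displayed formula for \(\nabla_{\theta_s}\operatorname{H}(g)\), which I take as given from the frame computation. For homogeneous \(g\) with constant coefficients, \(\theta_s g=0\). The terms \(\Psi g-(\deg g)g\) are again homogeneous of degree \(\deg g\), so they stay in \(\mathcal{E}\) with no change of \(s\)-power.

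The term \(\sum_i a_i(\psi(v_i)-1)[v_i]\ast_s g\) has degree \(\deg g+1\), so \(\operatorname{H}\) attaches the weight \(s^{\ell-\deg g-1}\). I must check that this agrees with the \(s^{-1}\) prefactor coming from \(\nabla^{\mathrm{GM}}\). This is exactly how the displayed formula was derived: \(s^{-d_\alpha-1}\) times the frame shift \(s^\ell\). Reducing modulo \(s\) replaces \(\ast_s\) by \(\ast_{\check\Sigma}\), since \(\mathcal{S}/(s)\cong S_{\check\Sigma}\).

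Finally, well-definedness requires that the operator \(\tau=\Psi-\deg+\sum_i a_i(\psi(v_i)-1)[v_i]\ast_{\check\Sigma}\) preserves the ideal \((G_j)\) in \(S_{\check\Sigma}\) modulo \(s\). I expect this to be the main obstacle. I would try to deduce it from the fact that \(\nabla^{\mathrm{GM}}\) is already well defined on \(\mathcal{H}\), using the preceding proposition, so that the residue is automatically well defined on \(\mathcal{E}/s\mathcal{E}\). The explicit formula is then just the evaluation on representatives. As a consistency check, \([\Psi-\deg, M_{G_j}]=M_{(\Psi-\deg)G_j}\) vanishes modulo \(s\). This holds because \(G_j\) is a combination of ray monomials \([v_i]\), and on these \(\psi\) is linear within cones, so the commutator is controlled by terms that die at \(s=0\).
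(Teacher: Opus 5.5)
Your central step is exactly the paper's argument: you reduce the displayed formula for \(\nabla_{\theta_s}\operatorname{H}(g)\) modulo \(s\mathcal{E}\), and use \(\operatorname{H}(s^{b}w)=s^{b}\operatorname{H}(w)\in s\mathcal{E}\) for \(b\ge1\) to turn \(\ast_s\) into \(\ast_{\check\Sigma}\). However, your identification of \(\mathcal{E}/s\mathcal{E}\) is wrong, and the error propagates. Your recursion is off by a factor of \(s\). Since \(\overline{G_j\ast_s g_j}=-s\,\overline{\theta_j g_j}\) and \(\theta_j g_j\) has degree \(k-1\), one gets \(\operatorname{H}(G_j\ast_s g_j)=-s^{\ell-k+1}\,\overline{\theta_j g_j}=-\operatorname{H}(\theta_j g_j)\) exactly. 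So \(\operatorname{H}(g)=\sum_{d_\beta=k}c_\beta(s)\varepsilon_\beta-\operatorname{H}\bigl(\sum_j\theta_j g_j\bigr)\), which is the paper's identity \(s^{-k}\bar g=\cdots-s^{-(k-1)}\overline{\sum_j\theta_j g_j}\). The factor \(s\) is used up by the drop in degree, so the correction survives modulo \(s\mathcal{E}\). Hence \(u\mapsto\overline{\operatorname{H}(u)}\) on \(S_{\check\Sigma}\) does not factor through \(\mathcal{R}_0=S_{\check\Sigma}/(G_j)\). It kills \(\Xi_\mu u=\theta_\mu u+G_{\mu,a}\ast_{\check\Sigma}u\), not \(G_{\mu,a}\ast_{\check\Sigma}u\), and \(\mathcal{E}/s\mathcal{E}\) is the paper's auxiliary space \(V\). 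Concretely, \(\operatorname{H}(G_t\ast_s[v_i])=-\operatorname{H}([v_i])\). Its reduction modulo \(s\mathcal{E}\) is nonzero (it corresponds to \(a_i^{-1}D_i\) in degree one), even though \(G_t\ast_{\check\Sigma}[v_i]\in(G_j)\).

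This is also why the ``main obstacle'' you isolate cannot be overcome: \(\tau\) does not preserve \((G_j)\). Since \(\Psi\) and \(\deg\) are derivations of \(\ast_{\check\Sigma}\) (because \(\psi\) is linear on cones), \([\tau,M_{G_\mu}]=M_{(\Psi-\deg)G_\mu}\), where \((\Psi-\deg)G_\mu=\sum_i a_i(\psi(v_i)-1)\langle\mu,v_i\rangle[v_i]\). This does not vanish at \(s=0\): the factors \(\psi(v_i)-1\) are the ray values of the nonlinear convex function \(\psi-\deg\), e.g.\ \(c_i\) for \(\psi=\psi_D\). Put \(P=\sum_i a_i(\psi(v_i)-1)[v_i]\). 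Then \(\tau(G_t)=P+P\ast_{\check\Sigma}G_t\equiv P\) modulo \((G_j)\), and its class \(\sum_i(\psi(v_i)-1)D_i\) equals \(D\neq0\) for \(\psi=\psi_D\). So your consistency check is false, and appealing to well-definedness of \(\nabla^{\mathrm{GM}}\) would only derive a false conclusion from the faulty identification.

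The repair is to discard that identification. \(\tau\) is well defined on \(\mathcal{E}/s\mathcal{E}\) simply because \(\nabla_{\theta_s}\mathcal{E}\subseteq\mathcal{E}\) by the preceding proposition, and the formula is read on representatives \(\overline{\operatorname{H}(g)}\) with \(g\in S_{\check\Sigma}\). The correct compatibility check is \([\tau,\Xi_\mu]=M_{(\Psi-\deg)G_\mu-\theta_\mu P}=0\), since both terms equal \(\sum_i a_i(\psi(v_i)-1)\langle\mu,v_i\rangle[v_i]\). Thus \(\tau\) preserves \(\sum_\mu\Xi_\mu(S_{\check\Sigma})\) and descends to \(V\cong\mathcal{E}/s\mathcal{E}\), which is the form used afterwards to compare \(\tau\) with \(D\cup-\).
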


\subsection{An auxiliary space}
Now we can use this to construct a conjectural correspondence between
the residue of the monodromy of period integrals around a LCSL point 
and the cup product on the Chen--Ruan cohomology ring of the mirror.


Inspired by the action \((\Psi-\deg)\) in the residue connection,
we will introduce an isomorphism
\begin{equation}
\label{eq:identifying-connection}
  \operatorname{\mathfrak{M}}\colon \mathcal{E}\slash s\mathcal{E}\to 
  \mathrm{H}_{\mathrm{CR}}^{\bullet}(
  \check{\mathcal{Z}};\mathbf{C})
\end{equation}
as follows.

On the algebra \(S_{\check{\Sigma}}\) we define
\begin{equation}
  \Xi_{\mu}:=\theta_{\mu}+G_{\mu,a}\ast_{\check{\Sigma}}\colon S_{\check{\Sigma}}\to S_{\check{\Sigma}}.
\end{equation}
Consider an auxiliary vector space
\begin{equation}
  V:=S_{\check{\Sigma}}\,\big\slash\, \sum_{\mu} \Xi_{\mu}(S_{\check{\Sigma}}).
\end{equation}
The vector space \(V\) mixes the structure
of the fiber at \(s=1\) (the derivations \(\Xi_{\mu}\)) and that of \(s=0\) (the product).

\qquad \\
\noindent \textbf{Notation and convention}.~Let \(u=z^{q}[b]\) with \(b\in\mathrm{Box}(\check{\Sigma})\). We take
a minimal cone containing \(b\) and write
\(b=\sum_{i=0}^{p}d_{i}v_{i}\) with \(0\le d_{i}<1\) for \(0\le i\le p\)
using this cone. Thus \(\{v_{i}\mid d_{i}\ne 0\}\)
generates the minimal cone containing \(b\).
Set \(q=(q_{0},\ldots,q_{p})\). 
\quad \\

Whenever \(u=z^{q}[b]\) is taken, we always pick \(d_{0},\ldots,d_{p}\)
and \(q_{0},\ldots,q_{p}\) as above and follow this notation and convention.

\begin{lemma}
There is a linear isomorphism \(V\cong \mathcal{E}\slash s\mathcal{E}\).
\end{lemma}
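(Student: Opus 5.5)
The plan is to compare both spaces with the graded ring \(S_{\check\Sigma}/(G_{\mu,a})\cong\mathrm{H}^\bullet_{\mathrm{CR}}(\check{\mathcal Z};\mathbf{C})\) of \eqref{eq:toric}. First identify \(\mathcal{E}/s\mathcal{E}\). By construction \(\mathcal{E}=\bigoplus_\alpha\mathcal{O}\varepsilon_\alpha\) with \(\varepsilon_\alpha=\operatorname{H}(g_\alpha)\), and the \(g_\alpha\) are homogeneous lifts of a graded \(\mathcal{O}\)-basis of \(\mathcal{R}_{\mathcal O}\) (Theorem~\ref{thm:comparison}). Reducing modulo \(s\), the assignment \(\varepsilon_\alpha\mapsto[g_\alpha|_{s=0}]\) gives a linear isomorphism \(\mathcal{E}/s\mathcal{E}\cong\mathcal{R}_0=S_{\check\Sigma}/(G_1,\dots,G_\ell)\). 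We will check that it is independent of the choice of lifts. The key observation, from the induction in the regularity proof, is that \(\operatorname{H}(G_j\ast_s h)=-\operatorname{H}(s\,\theta_j h)\cdot s^{-1}\cdot s=-s\operatorname{H}(\theta_j h)\in s\mathcal{E}\). Hence \(\operatorname{H}\) descends to a well-defined surjection \(\mathcal{R}_0\to\mathcal{E}/s\mathcal{E}\), and equality of ranks (\(\operatorname{Vol}_M(\Delta)\) on both sides) makes it bijective.

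Next, filter \(V\). Put on \(S_{\check\Sigma}\) the increasing height filtration \(F_k\), with the shifted filtration on the Koszul complex of the commuting operators \(\Xi_\mu\). The operators commute because \(\theta_\mu\) is a derivation of \(\ast_{\check\Sigma}\): \(b(c,c')\) is zero on cones and the \(\theta_\mu\) are homogeneous. As in the second half of the proof of Theorem~\ref{thm:comparison}, multiplication by \(G_{\mu,a}\) raises height by one while \(\theta_\mu\) preserves it. The associated graded complex is therefore the Koszul complex \(K^\bullet(G_1,\dots,G_\ell;S_{\check\Sigma})\). Since \(S_{\check\Sigma}\) is Cohen--Macaulay and the \(G_j\) form a regular sequence (the \(b=0\) fibre argument in Theorem~\ref{thm:comparison}), this graded complex is exact below top degree. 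The same filtered-lifting and strictness argument then gives \(\operatorname{gr}_F V\cong\mathcal{R}_0\), and in particular \(\dim V=\dim\mathcal{R}_0<\infty\).

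Finally, combine the two steps. Choose homogeneous lifts of a basis of \(\mathcal{R}_0\) in \(S_{\check\Sigma}\). By strictness, their classes form a basis of \(V\) adapted to \(F\). Sending them to the corresponding \(\varepsilon_\alpha\) mod \(s\) gives the linear isomorphism \(V\cong\mathcal{E}/s\mathcal{E}\), and it is filtered.

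The main obstacle is the filtered comparison for \(V\). Here the filtration on \(S_{\check\Sigma}\) is unbounded above, and \(\Xi_\mu\) mixes degrees upward rather than downward. Strictness therefore needs care: I would reverse the grading, treating \(G\) as the leading term and \(\theta\) as lower order, and use that \(\mathcal{R}_0\) vanishes above degree \(n\). Together with Nakayama-type finiteness in each height strand, this kills leftover high-degree terms. If this becomes delicate, I would fall back on the weaker statement \(\dim V=\dim\mathcal{R}_0\) via the Euler characteristic of graded strands, which already suffices for a linear isomorphism.
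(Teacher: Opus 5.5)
Your argument does prove the lemma as stated. However, the \emph{key observation} in your first paragraph is false: it is off by a factor of \(s\). Since \(\deg(G_j\ast_s h)=\deg h+1\), the normalization in \(\operatorname{H}\) absorbs the \(s\) coming from \(\overline{D_{j,s}h}=0\). This gives \(\operatorname{H}(G_j\ast_s h)=s^{\ell-\deg h-1}\,\overline{G_j\ast_s h}=-s^{\ell-\deg h}\,\overline{\theta_j h}=-\operatorname{H}(\theta_j h)\). This is exactly the induction step \(s^{-k}\overline{G_j\ast_s g_j}=-s^{-(k-1)}\overline{\theta_j g_j}\) of the regularity proof, which stays in the lattice but not in \(s\) times it.

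For \(\mu=\mu_t\) and \(u\) of height \(k\) the identity reads \(\overline{\operatorname{H}(G_t\ast_{\check\Sigma}u)}=-k\,\overline{\operatorname{H}(u)}\) in \(\mathcal{E}/s\mathcal{E}\). Take \(u=[\delta]\). The left side is the image of an element of \((G_1,\ldots,G_\ell)\), but the right side is nonzero. Indeed, in height one the correction terms are constant multiples of the \(G_j\), which \(\theta_j\) kills. So under your basis identification \(\overline{\operatorname{H}([\delta])}\) corresponds to the class \(-e/a_0\neq0\) of \([\delta]\) in \(\mathcal{R}_0\). Hence \(\overline{\operatorname{H}}\) does not descend to \(\mathcal{R}_0\). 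Your \(\mathcal{E}/s\mathcal{E}\cong\mathcal{R}_0\) is only the basis-matching map \(\overline{\varepsilon_\alpha}\mapsto[g_\alpha|_{s=0}]\), and it does depend on the lifts.

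None of this harms the bare existence claim, which only needs \(\dim V=\dim\mathcal{E}/s\mathcal{E}\). Your two dimension counts are correct. The \(V\) half (shifted height filtration, associated graded \(K^\bullet(G_1,\ldots,G_\ell;S_{\check\Sigma})\), regular sequence, lifting and strictness as in Theorem~\ref{thm:comparison}) is precisely the paper's argument. Your strictness worry is unfounded: the shifted filtration is exhaustive and bounded below, and \(\theta_\mu\) lowers it by one, so the downward iteration applies verbatim. The Euler-characteristic fallback, by contrast, would not by itself give \(\dim V\), because \(\Xi_\mu\) is inhomogeneous and that complex has no graded strands.

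In place of your first step, the paper does the corrected computation. The map \(\overline{\operatorname{H}}\) kills \(\Xi_\mu(S_{\check\Sigma})\): the \(\theta_\mu\)-term cancels the \(G_\mu\)-term by the identity above, and replacing \(\ast_s\) by \(\ast_{\check\Sigma}\) costs only positive powers of \(s\). So \(\overline{\operatorname{H}}\) descends to a canonical map \(V\to\mathcal{E}/s\mathcal{E}\). It is onto because \(g_\alpha|_{s=0}\mapsto\overline{\varepsilon_\alpha}\), and bijective by the dimension count. This is why \(V\), not \(\mathcal{R}_0\), is the natural source. It is also this specific isomorphism that Theorem~\ref{thm:monodromy-residue-and-cup-prod} uses when it evaluates \(\mathfrak{M}(\overline{\operatorname{H}(z^q[b])})\) as \(\mathfrak{M}(z^q[b])\); a lift-dependent isomorphism would not support that step.
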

\begin{proof}
  Define \(\varphi\colon S_{\check\Sigma}\to \mathcal{E}\slash s\mathcal{E}\) by
  \begin{equation}
    u\mapsto \mathrm{H}(u)~\pmod{s\mathcal{E}}
  \end{equation}
  for homogeneous element \(u\) and extend it to \(S_{\check\Sigma}\) by linearity.

  Take \(u=z^{q}[b]\) of degree \(k\). Compute
  \begin{equation}
    \Xi_{\mu}\left(z^{q}[b]\right)=\theta_{\mu}u+
    \sum_i\langle\mu,v_i\rangle z_i\ast_{\check\Sigma} u.
  \end{equation}
  Applying \(\varphi\) to it gives
  \begin{equation}
    \begin{aligned}
      &s^{\ell-k}\theta_{\mu} u + s^{\ell-k-1}\sum_i\langle\mu,v_i\rangle z_i\ast_{\check\Sigma} u\\
      &\qquad =s^{\ell-k-1}\left(s\theta_{\mu}u+ \sum_i\langle\mu,v_i\rangle z_i\ast_{\check\Sigma}u\right)\\
      &\qquad =s^{\ell-k-1}\left(\sum_i\langle\mu,v_i\rangle z_i\ast_{\check\Sigma}u
      -\sum_i\langle\mu,v_i\rangle z_i\ast_{s}u\right)
    \end{aligned}  
    \end{equation}
    and the last term becomes zero after passing to \(s\mathcal{E}\).
    Indeed, let \(c\) be the lattice point underlying \(u\).
If \(v_i\) and \(c\) lie in a common cone, then
\begin{equation}
    z_i\ast_s u=z_i\ast_{\check\Sigma}u,
\end{equation}
so the corresponding summand is zero. Otherwise, we have
\begin{equation}
    z_i\ast_{\check\Sigma}u=0,~\mbox{and}~
  z_i\ast_s u=s^{b(v_i,c)}w_i,
\end{equation}
where \(w_i\) is a homogeneous monomial of degree \(k+1\)
and \(b(v_i,c)\geq1\). Its contribution is 
\begin{equation}
    -s^{\ell-k-1+b(v_i,c)}w_i
  =-s^{b(v_i,c)}\mathrm{H}(w_i)
  \in s\mathcal E.
\end{equation}
It is clear that this is surjective.
To show that it is an isomorphism, we compute
their dimension.
Consider the Koszul complex \(K^{\bullet}\)
on \(S_{\check\Sigma}\) defined by the commuting operators
\begin{equation}
  \Xi_\mu=\theta_\mu+G_{\mu,a}\ast_{\check\Sigma}.
\end{equation}
Filter it by
\[
  F_rK^p=
  F_{r+p}S_{\check\Sigma}
  \otimes\bigwedge^p\widetilde M_{\mathbf C},
\]
where \(F_jS_{\check\Sigma}\) consists of elements of
degree at most \(j\) as usual. 
The associated graded differential
is multiplication by the \(G_{\mu,a}\).
These elements form a regular sequence, so the associated
graded complex has cohomology only in degree \(\ell\).
The argument in Theorem \ref{thm:comparison} gives
\begin{equation}
  \operatorname{Gr}V\cong S_{\check\Sigma}/(G_{\mu,a}).
\end{equation}
Then \(\dim V=\dim\operatorname{Gr}V=\dim \mathrm{H}_{\mathrm{CR}}^{\bullet}(\check{\mathcal{Z}};\mathbf{C})
=\dim \mathcal{E}\slash s\mathcal{E}\).
\end{proof}

Define a map
\begin{equation}
\label{eq:def-monodromy-ring}
  \mathfrak{M}\colon S_{\check{\Sigma}}\to \mathrm{H}^{\bullet}_{\mathrm{CR}}(\check{\mathcal{Z}};\mathbf{C})
\end{equation}
by the following rule. Recall that we set \(z_{i}=a_{i}[v_{i}]\) for \(0\le i\le p\). 
Use the minimal cone presentation, we define
\begin{equation}
\label{eq:def-m}
  \mathfrak{M}\left(z^{q}[b]\right):=
  \mathbf{1}_{b}\prod_{i=0}^{p}\prod_{k=0}^{q_{i}-1}(D_{i}-d_{i}-k)
\end{equation}
and extend it linearly. 
Note that the product is taken in the Chen--Ruan cohomology ring.
Here the empty product is understood as \(\mathbf{1}\equiv\mathbf{1}_{\mathbf{0}}\)
corresponding to \(b=\mathbf{0}\) and \(D_{i}\) is the element corresponding to \(z_{i}\).

\begin{lemma}
\label{lem:identity-def-m}
For \(z^{q}[b]\in S_{\check{\Sigma}}\) as above, we have the identity
\begin{equation}
  \mathfrak{M}(z_{i}\ast_{\check{\Sigma}} z^{q}[b])=(D_{i}-d_{i}-q_{i})\mathfrak{M}(z^{q}[b]).
\end{equation}
\end{lemma}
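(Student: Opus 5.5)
The plan is a direct case analysis on whether \(v_i\) lies in a common cone with the lattice point \(c=b+\sum_j q_jv_j\) underlying \(u=z^{q}[b]\). Recall that \(z^{q}[b]\) denotes the monomial \(a^{q}[b+\sum_j q_j v_j]\) in \(S_{\check\Sigma}\), with \(b=\sum_j d_jv_j\) written in its minimal cone \(\sigma_b\), \(0\le d_j<1\), and with the support of \(q\) together with \(\sigma_b\) spanning a cone \(\tau\) of \(\check\Sigma\). Since \(\mathfrak{M}\) is defined through the minimal-cone presentation, the first step is to write down the box decomposition \eqref{eq:boxdecomposition} of \(z_i\ast_{\check\Sigma}u\), and then to compare both sides of the identity.

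\textbf{Case 1: \(v_i\) and \(c\) lie in a common cone.} Then \(z_i\ast_{\check\Sigma}u=a_i\,a^{q}[c+v_i]\). Adding \(v_i\) only raises the integral coefficient of \(v_i\) by one, so the box part stays \(b\) and the new exponent vector is \(q+e_i\). The box part is unchanged because \(0\le d_i<1\) and the decomposition \eqref{eq:boxdecomposition} is unique in the larger cone spanned by \(\tau\) and \(v_i\). With this presentation, \eqref{eq:def-m} gives
\begin{equation*}
\mathfrak{M}(z^{q+e_i}[b])=\mathbf{1}_b\prod_{j}\prod_{k=0}^{q_j+\delta_{ij}-1}(D_j-d_j-k).
\end{equation*}
This differs from \(\mathfrak{M}(z^{q}[b])\) by exactly the factor \((D_i-d_i-q_i)\), and commutativity of the Chen--Ruan product lets me move this factor to the front. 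One must check that the \(\mathbf{1}\)-untwisted class \(D_i\) acts on the sector \(\mathbf{1}_b\) by restriction, as stipulated in \S\ref{subsec:sectorreflection}, so the product is the same as computed sectorwise.

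\textbf{Case 2: no common cone.} Then \(z_i\ast_{\check\Sigma}u=0\), so I must show that \((D_i-d_i-q_i)\,\mathfrak{M}(z^{q}[b])=0\) in \(\mathrm{H}^\bullet_{\mathrm{CR}}(\check{\mathcal Z};\mathbf{C})\). Since \(v_i\notin\tau\), we have \(d_i=q_i=0\), and the claim reduces to
\begin{equation*}
D_i\cdot\mathbf{1}_b\prod_{j\in\tau}\prod_{k=0}^{q_j-1}(D_j-d_j-k)=0.
\end{equation*}
I would expand the product. Every monomial in the \(D_j\), \(j\in\operatorname{supp}q\), that appears then multiplies \(D_i\mathbf{1}_b\). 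On the twisted sector \(\mathbf{1}_b\), whose cohomology is that of the closed substack for \(\sigma_b\), the restriction of \(D_i\) vanishes unless \(v_i\) together with \(\sigma_b\) spans a cone. Moreover, by the Stanley--Reisner relations in the sector's presentation \cite{2005-Borisov-Chen-Smith-the-orbifold-chow-ring-of-toric-deligne-mumford-stacks}, \(D_i\prod_{j\in J}D_j\mathbf{1}_b=0\) whenever \(\{v_i\}\cup\{v_j\}_{j\in J}\cup\sigma_b\) spans no cone.

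This vanishing is the main obstacle, because the constant terms \(-d_j-k\) produce monomials involving only a proper subset \(J\subseteq\operatorname{supp}q\), and \(\{v_i\}\cup J\cup\sigma_b\) may well span a cone. For \(j\) with \(d_j=0\) the factor \(k=0\) is exactly \(D_j\), so these \(j\) are always present in every monomial. The problematic indices are those \(j\in\sigma_b\) with \(d_j\ne0\). For these I would use the relation \(D_j\mathbf{1}_b\) versus the fractional shifts: in the Borisov--Chen--Smith ring, \(\mathbf{1}_{b}\cdot\mathbf{1}_{b'}\) products encode exactly \(\prod (D_j)^{\lceil\cdot\rceil}\) factors. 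I expect to need to reinterpret \(\prod_{k}(D_j-d_j-k)\) via the \(I\)-function-type identity. If that fails, I would restrict the claim to the case needed, namely when \(\mathfrak{M}\) is compared modulo the image of \(\Xi_\mu\). First, though, I would test whether \(\tau\) already forces the needed cone condition on \(\sigma_b\cup\{v_i\}\).
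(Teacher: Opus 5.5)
Your two-case structure is the paper's, and Case~1 is correct: it is just \eqref{eq:def-m} once you note that the box part stays \(b\). Case~2, however, is left unproved. You flag a ``main obstacle'' and then only list fallbacks, and one of them cannot work. Weakening the identity to hold only modulo \(\sum_\mu\Xi_\mu(S_{\check\Sigma})\) would be circular, because this lemma is exactly what Lemma~\ref{lem:well-def-m} uses to show that \(\mathfrak{M}\) kills \(\operatorname{Im}\Xi_\mu\).

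The obstacle is not real, and your own two observations already close the case.
\begin{itemize}
\item Let \(J_0=\{j\mid d_j=0,\ q_j\ge 1\}\). For \(j\in J_0\) the \(k=0\) factor is \(D_j\) itself. Hence \(\mathfrak{M}(z^q[b])=P\cdot\prod_{j\in J_0}D_j\cdot\mathbf{1}_b\), where \(P\) is a polynomial in untwisted divisor classes.
\item Since \(\operatorname{supp}q\subseteq J_0\cup\sigma_b(1)\), the ray set of the minimal cone of \(c\) is exactly \(J_0\cup\sigma_b(1)\). So the minimal cone of \(c\) coincides with that of \(b+\sum_{j\in J_0}v_j\).
\item The Case~2 hypothesis therefore gives \(z_i\ast_{\check\Sigma}\bigl(\prod_{j\in J_0}z_j\ast_{\check\Sigma}[b]\bigr)=0\) in \(S_{\check\Sigma}\). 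Pushing this through the ring homomorphism \(S_{\check\Sigma}\to\mathrm{H}^\bullet_{\mathrm{CR}}(\check{\mathcal Z};\mathbf{C})\) of \eqref{eq:toric} yields \(D_i\prod_{j\in J_0}D_j\,\mathbf{1}_b=0\). This is the sector Stanley--Reisner relation you quoted, with \(J=J_0\).
\item Consequently \(D_i\,\mathfrak{M}(z^q[b])=0\), whatever \(P\) is.
\end{itemize}

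The indices with \(d_j\neq 0\) that can drop out of a monomial of \(P\) are rays of \(\sigma_b\), and \(\sigma_b\) is already encoded by \(\mathbf{1}_b\). Concretely, any monomial with index set \(J\) satisfies \(J_0\subseteq J\subseteq\operatorname{supp}q\). Hence \(\{v_i\}\cup J\cup\sigma_b(1)=\{v_i\}\cup J_0\cup\sigma_b(1)\), which never spans a cone, contrary to your worry.

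This is precisely the paper's argument: the product contains ``factors \(D_j\) for every active ray outside the box support, while \(\mathbf{1}_b\) encodes the sector condition.'' No \(\mathbf{1}_b\cdot\mathbf{1}_{b'}\) identities or \(I\)-function manipulations are needed.
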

\begin{proof}
  There are two cases. If \(z_{i}\) and \(z^{q}[b]\) lie in a common cone, 
  the equation is nothing but the definition \eqref{eq:def-m}.
  If \(z_{i}\) and \(z^{q}[b]\) do not lie in a common cone, then the
  product \(z_{i}\ast_{\check{\Sigma}} z^{q}[b]\) is zero.
  On the other hand, in this case, we must have \(q_{i}=d_{i}=0\). 
  The product \(\mathfrak{M}\left(z^{q}[b]\right)\) must contain
  factors \(D_j\) for every active ray outside the box support, 
  while \(\mathbf{1}_b\) encodes the sector condition. 
  Multiplying by \(D_i\) annihilates \(\mathfrak{M}\left(z^{q}[b]\right)\) 
  by the corresponding Stanley--Reisner 
  relations on that sector.
\end{proof}

Now we check that \(\mathfrak{M}\) factors through the quotient \(V\).
\begin{lemma}
\label{lem:well-def-m}
We have \(\mathfrak{M}\left(\Xi_{\mu}u\right)=0\) for any \(u\in S_{\check{\Sigma}}\).
\end{lemma}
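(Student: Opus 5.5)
The plan is to reduce everything to Lemma~\ref{lem:identity-def-m} together with the linear relations among the divisor classes. By linearity of \(\mathfrak{M}\) and of \(\Xi_{\mu}\), it suffices to treat a single monomial \(u=z^{q}[b]\), with \(d_{i}\) and \(q_{i}\) chosen as in the Notation and convention.

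First I compute \(\theta_{\mu}u\). The element \(z^{q}[b]\) is a nonzero scalar multiple (a product of powers of the \(a_{i}\)) of the basis monomial \([c]\). Here
\begin{equation*}
  c=b+\sum_{i}q_{i}v_{i}=\sum_{i=0}^{p}(d_{i}+q_{i})v_{i},
\end{equation*}
the sum taken inside a common cone, which exists since \(u\neq 0\) in \(S_{\check{\Sigma}}\). The derivation \(\theta_{\mu}\) acts only on the lattice label and not on the scalar coefficients, so
\begin{equation*}
  \theta_{\mu}u=\langle\mu,c\rangle u=\sum_{i}(d_{i}+q_{i})\langle\mu,v_{i}\rangle\,u .
\end{equation*}

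Second, I expand the multiplicative part \(G_{\mu,a}\ast_{\check{\Sigma}}u=\sum_{i}\langle\mu,v_{i}\rangle\, z_{i}\ast_{\check{\Sigma}}u\). Applying Lemma~\ref{lem:identity-def-m} term by term gives
\begin{equation*}
  \mathfrak{M}(z_{i}\ast_{\check{\Sigma}}u)=(D_{i}-d_{i}-q_{i})\,\mathfrak{M}(u)
\end{equation*}
for every \(i\). This includes the rays not sharing a cone with \(c\): there both sides vanish and \(d_{i}=q_{i}=0\). Summing the two contributions, the scalar parts \(\pm(d_{i}+q_{i})\langle\mu,v_{i}\rangle\) cancel and one is left with
\begin{equation*}
  \mathfrak{M}(\Xi_{\mu}u)=\Bigl(\sum_{i=0}^{p}\langle\mu,v_{i}\rangle D_{i}\Bigr)\cup\mathfrak{M}(u).
\end{equation*}

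Finally, the class \(\sum_{i}\langle\mu,v_{i}\rangle D_{i}\) is zero in \(\mathrm{H}^{\bullet}_{\mathrm{CR}}(\check{\mathcal{Z}};\mathbf{C})\). For \(\mu=\mu_{t}\) this is the convention \(D_{0}=-\sum_{i>0}D_{i}=-e\). For \(\mu=\nu\in N\) it is the standard linear relation \(\sum_{\rho}\langle\nu,\rho\rangle D_{\rho}=0\). Equivalently, this is the relation \(\sum_{i}v_{i}D_{i}=0\) already used for \(\mathcal{B}^{\mathrm{raw}}_{c}\). These are precisely the linear relations imposed by the \(G_{\mu,a}\) in \eqref{eq:toric}.

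I expect the main care point to be the bookkeeping in the first two steps. The coefficients \(d_{i},q_{i}\) must be read off from a single cone containing \(c\), and this must be the same cone used both in the definition \eqref{eq:def-m} and in Lemma~\ref{lem:identity-def-m}. One also has to check that the rescaling \(z_{i}=a_{i}[v_{i}]\) is compatible with \(\theta_{\mu}\). The latter holds because that rescaling is an algebra automorphism preserving the lattice grading. Once these are matched, the cancellation is immediate.
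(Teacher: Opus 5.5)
Your proposal is correct and is essentially the paper's proof. You write \(\Xi_{\mu}u=\bigl(\sum_i(q_i+d_i)\langle\mu,v_i\rangle\bigr)u+\sum_i\langle\mu,v_i\rangle z_i\ast_{\check\Sigma}u\), apply Lemma~\ref{lem:identity-def-m} termwise so the scalar parts cancel, and kill the surviving factor \(\sum_i\langle\mu,v_i\rangle D_i\) by the linear relations in \(\mathrm{H}^{\bullet}_{\mathrm{CR}}(\check{\mathcal{Z}};\mathbf{C})\); your extra bookkeeping remarks (the cone used for the box decomposition, and compatibility of the rescaling \(z_i=a_i[v_i]\) with \(\theta_\mu\)) are left implicit in the paper and do not change the argument.
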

\begin{proof}
Let \(u=z^{q}[b]\) as above. Then
\begin{equation}
  \begin{aligned}
    \Xi_{\mu}u =\left(\sum_i(q_i+d_i)\langle\mu,v_i\rangle\right)u+
    \sum_i\langle\mu,v_i\rangle z_i\ast_{\check\Sigma} u.
  \end{aligned}
\end{equation}
Applying \(\mathfrak{M}\) yields
\begin{equation}
  \begin{aligned}
    \mathfrak{M}\left(\Xi_{\mu}u \right)= \left(\sum_{i}\langle \mu,v_{i}\rangle D_{i}\right)\mathfrak{M}\left(u\right)=0.
  \end{aligned}
\end{equation}
This completes the proof.
\end{proof}
Lemmas \ref{lem:identity-def-m} and \ref{lem:well-def-m} together
imply that \(\mathfrak{M}\) factors through \(V\). We 
use the same notation to denote the induced map
\begin{equation}
  \mathfrak{M}\colon \mathcal{E}\slash s\mathcal{E}\cong 
  V\to \mathrm{H}_{\mathrm{CR}}^{\bullet}(\check{\mathcal{Z}};\mathbf{C}).
\end{equation}
It is clear that this is surjective. By the dimension reason,
we conclude that \(\mathfrak{M}\) is an isomorphism.

\subsection{Residue and cup product}
For \(g\in\mathcal{S}_{\mathcal{O}}\), 
\(\operatorname{H}(g)\) is an element in \(\mathcal{E}\).
We can further project it to the stalk \(\mathcal{E}\slash s\mathcal{E}\)
and the image of \(\operatorname{H}(g)\) there is denoted by 
\(\overline{\operatorname{H}(g)}\).

In what follows, we set \(\psi=\psi_{D}\) where
\(D=\sum_{i=1}^{p} c_{i}D_{i}\) is an ample toric divisor with \(c_{i}>0\).
\begin{theorem}
\label{thm:monodromy-residue-and-cup-prod}
The isomorphism \eqref{eq:identifying-connection}
identifies the residue connection 
\(\Psi-\deg + \textstyle \sum_{i} a_{i}(\psi(v_{i})-1)[v_{i}]\ast_{\check{\Sigma}}\) with
the cup product 
\begin{equation}
  D\cup - \colon 
  \mathrm{H}_{\mathrm{CR}}^{\bullet}(
  \check{\mathcal{Z}};\mathbf{C})\to \mathrm{H}_{\mathrm{CR}}^{\bullet}(
  \check{\mathcal{Z}};\mathbf{C}).
\end{equation}
\end{theorem}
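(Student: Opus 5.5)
The plan is to compute directly on representatives. Let \(u=z^{q}[b]\in S_{\check\Sigma}\) be a monomial in minimal-cone presentation, so its underlying lattice point is \(c=b+\sum_i q_iv_i=\sum_i(q_i+d_i)v_i\). By the proposition giving the residue connection \(\tau\), we evaluate \(\mathfrak{M}(\tau u)\) and compare it with \(D\cup\mathfrak{M}(u)\). Since \(\mathfrak{M}\) is already known to be a well-defined isomorphism \(\mathcal{E}/s\mathcal{E}\cong V\to \mathrm{H}^{\bullet}_{\mathrm{CR}}(\check{\mathcal Z};\mathbf{C})\) by Lemmas \ref{lem:identity-def-m} and \ref{lem:well-def-m}, it suffices to prove
\begin{equation}
  \mathfrak{M}(\tau u)=D\cup\mathfrak{M}(u)
\end{equation}
for all such monomials \(u\).

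We compute the three terms of \(\tau u\) in turn. The term \(\deg u\) equals the height of \(c\), namely \(k=\sum_i(q_i+d_i)\), because every \(v_i\) has height one. The term \(\Psi u\) equals \(\psi(c)u\). Since \(\psi=\psi_D\) is linear on the cone containing \(c\), we get \(\psi(c)=\sum_i(q_i+d_i)\psi(v_i)\). Hence
\begin{equation}
  (\Psi-\deg)u=\sum_i(q_i+d_i)(\psi(v_i)-1)\,u .
\end{equation}
For the product term, Lemma \ref{lem:identity-def-m} gives \(\mathfrak{M}(z_i\ast_{\check\Sigma}u)=(D_i-d_i-q_i)\mathfrak{M}(u)\). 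Here \(a_i[v_i]=z_i\), and the case where \(v_i\) does not share a cone with \(c\) is already handled inside that lemma. Adding the contributions, the scalar parts \((q_i+d_i)(\psi(v_i)-1)\) cancel the terms \(-(d_i+q_i)(\psi(v_i)-1)\), and we obtain
\begin{equation}
  \mathfrak{M}(\tau u)=\Bigl(\sum_i(\psi(v_i)-1)D_i\Bigr)\cup\mathfrak{M}(u).
\end{equation}
Finally, \(\psi(v_0)-1=0\) and \(\psi(v_i)-1=c_i\) for \(i\ge1\), so the class in parentheses is exactly \(\sum_{i\ge1}c_iD_i=D\).

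The main obstacle is bookkeeping. First, we must check that \(\tau\), computed on representatives in \(\mathcal{S}_{\mathcal O}\), descends compatibly through the identification \(\varphi\colon V\cong\mathcal{E}/s\mathcal{E}\). In particular, the lower-order correction \(-s\,\overline{\theta g}\) produced in the regular-singularity proof must contribute nothing modulo \(s\mathcal{E}\), or else be absorbed by the \(\Xi_\mu\)-relations that \(\mathfrak{M}\) kills by Lemma \ref{lem:well-def-m}. To handle this, I would use the formula \(\nabla_{\theta_s}\mathrm{H}(g)=\mathrm{H}(\theta_sg+\Psi g-(\deg g)g)+\mathrm{H}(\sum_i a_i(\psi(v_i)-1)[v_i]\ast_s g)\) for a homogeneous \(g\) with constant coefficients, so that \(\theta_s g=0\). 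I would then reduce modulo \(s\) exactly as in the proof that \(\varphi\) is well defined: products \(\ast_s\) across different cones acquire positive powers of \(s\), hence lie in \(s\mathcal{E}\), and there they agree with \(\ast_{\check\Sigma}\).

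The second point is the convention that \(\psi\) be evaluated at the lattice point \(c\), including its box part. Because \(\psi_D\) is defined from Cartier data, it is linear on each cone and takes integral values on \(\widetilde M\). The identity \(\psi(c)=\sum_i(q_i+d_i)\psi(v_i)\) therefore holds even when the coefficients \(d_i\) are fractional, which is exactly what is needed for the twisted sectors.
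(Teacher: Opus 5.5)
Your proposal is correct and follows the paper's proof essentially line by line. Both arguments evaluate the residue on monomials $z^{q}[b]$, use linearity of $\psi_D$ on the cone of $c$ to rewrite $\Psi-\deg$ as $\sum_i(q_i+d_i)(\psi(v_i)-1)$, apply Lemma~\ref{lem:identity-def-m} to the product term, and cancel to obtain $\sum_i(\psi(v_i)-1)D_i=D$ using $\psi(v_0)=1$. You also check that $\ast_s$ and $\ast_{\check\Sigma}$ agree modulo $s\mathcal{E}$, so that the residue computed through $\operatorname{H}$ descends along $V\cong\mathcal{E}/s\mathcal{E}$. The paper leaves this step implicit, and you handle it correctly.
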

\begin{proof}
Note that \(\mathfrak{M}\) is surjective and hence is an isomorphism by the dimension reason.
We compute for each homogeneous element in \(\{g_{\alpha}\}\). Consider
\(\sum_{i=0}^{p} q_{i} v_{i} + b\) with \(b=\sum_{i=0}^{p} d_{i} v_{i}\),~\(0\le d_{i}<1\), 
\begin{equation}
\begin{aligned}
  \tau\left(\overline{\operatorname{H}(z^{q}[b])}\right)&=\sum_{i=0}^{p} (q_{i}+d_{i})\psi(v_{i})
  \overline{\operatorname{H}(z^{q}[b])}
  -\sum_{i=0}^{p}(q_{i}+d_{i})\overline{\operatorname{H}(z^{q}[b])}\\
  &\qquad +\sum_{i=0}^{p} (\psi(v_{i})-1)\overline{\operatorname{H}(z_{i}z^{q}[b])}\\
  &= \sum_{i=1}^{p}(\psi(v_{i})-1)(q_{i}+d_{i})\overline{\operatorname{H}(z^{q}[b])}\\
  &\qquad +\sum_{i=0}^{p} (\psi(v_{i})-1)\overline{\operatorname{H}(z_{i}z^{q}[b])}.
\end{aligned}
\end{equation}
Applying the map \(\mathfrak{M}\) to it yields
\begin{equation}
\begin{aligned}
  &\mathfrak{M}\left(\tau\left(\overline{\operatorname{H}(z^{q}[b])}\right)\right)\\
  &=\sum_{i=0}^{p} (\psi(v_{i})-1)(q_{i}+d_{i})\mathfrak{M}\left(\overline{\operatorname{H}(z^{q}[b])}\right)\\
  &\qquad +\sum_{i=0}^{p} (\psi(v_{i})-1)\mathfrak{M}\left(\overline{\operatorname{H}(z_{i}z^{q}[b])}\right)\\
  &=\sum_{i=0}^{p} (\psi(v_{i})-1)(q_{i}+d_{i})\mathfrak{M}\left(\overline{\operatorname{H}(z^{q}[b])}\right)\\
  &\qquad +\sum_{i=0}^{p}(\psi(v_{i})-1)(D_{i}-q_{i}-d_{i})
  \mathfrak{M}\left(\overline{\operatorname{H}(z^{q}[b])}\right)\\
  &= D\cup\mathfrak{M}\left(\overline{\operatorname{H}(z^{q}[b])}\right)
\end{aligned}
\end{equation}
since \(\psi(v_{0})=1\).
The residue connection corresponds to 
cup product with the divisor class \(D\) under the linear isomorphism 
\(\mathfrak{M}\).
\end{proof}

Theorem \ref{thm:monodromy-residue-and-cup-prod} provides
an expected identification predicted by mirror symmetry.
We will utilize \(\mathcal{K}^{\bullet}\) to 
compare their integral structures as the next step
in the future.

\appendix

\section{Example: \texorpdfstring{\(\mathcal{Z}=\) 
the mirror \(\mathbf{P}^2\)}{the toric endpoint is P2}}
We have \(\check Z=\mathbf{P}^2\) in this example. The Newton polygon is the
small triangle with vertices \((1,0),(0,1),(-1,-1)\); it is not the
anticanonical polygon of \(Z=\mathbf{P}^2\). 
The latter case is treated
in the next section.
Take
\begin{equation}
 f_a=a_0+a_1x+a_2y+a_3/(xy),
\end{equation}
with exponent vectors
\begin{equation}
 v_0=(0,0,1),~v_1=(1,0,1),~
 v_2=(0,1,1),~v_3=(-1,-1,1).
\end{equation}
They satisfy \(v_1+v_2+v_3=3v_0\), and
\begin{equation}
 S=\mathbf{C}[u_0,u_1,u_2,u_3]/(u_1u_2u_3-u_0^3).
\end{equation}
The three twisted operators in logarithmic coordinates are
\begin{equation}
 \begin{aligned}
 D_x&=x\partial_x+M_{t(a_1x-a_3/(xy))},\\
 D_y&=y\partial_y+M_{t(a_2y-a_3/(xy))},\\
 D_t&=t\partial_t+M_{t(a_0+a_1x+a_2y+a_3/(xy))}.
 \end{aligned}
\end{equation}
For the ordinary Jacobian quotient, put
\begin{equation}
 q=-\frac{a_1a_2a_3}{a_0^3},\quad z_i=a_i u_i.
\end{equation}
The derivative relations give
\begin{equation}
 z_1=z_2=z_3=h,\quad z_0=-3h,
\end{equation}
and the binomial relation becomes \((1-27q)h^3=0\). Thus, for nonzero
coefficients and \(q\ne1/27\), we have
\begin{equation}
 R_a\cong\mathbf{C}[h]/(h^3).
\end{equation}
In this particular example the Jacobian ring already agrees with the
classical toric cohomology ring. 

For the fan degeneration, we choose
\(\psi(v_0)=0\) and \(\psi(v_1)=\psi(v_2)=\psi(v_3)=1\), extending
linearly on the three star cones. Then \eqref{eq:product} has presentation
\begin{equation}
 \mathcal{S}\cong
 \mathbf{C}[s,u_0,u_1,u_2,u_3]/(u_1u_2u_3-s^3u_0^3).
\end{equation}
At \(s=0\) end, the relation is nothing but the Stanley--Reisner relation
\(u_1u_2u_3=0\). Combining it with the linear relations yields
the cohomology ring
\(\mathbf{C}[h]/(h^3)\), with \(z_0=-3h\), as required.

For the entire family, eliminating the three linear forms gives
\begin{equation}
 \mathcal{S}/(G_x,G_y,G_t)
 \cong\mathbf{C}[s,h]/\bigl((1-27qs^3)h^3\bigr).
\end{equation}
Thus one can take
\begin{equation}
 A=\mathbf{C}[s,(1-27qs^3)^{-1}],\quad
 \mathcal{R}_A=A[h]/(h^3).
\end{equation}
The hypothesis \(q\ne1/27\) ensures that both endpoints remain in this
base. The excluded parameters are genuine degenerations of the graded
Jacobian quotient; its relation in \(h^3\) disappears. 

For a polynomial \(g\in\mathcal{S}_A\), write
\(\operatorname{cl}(g)\) for its top twisted cohomology class. Lifting
the graded basis gives the following \(A\)-basis of \(\mathcal{H}\).
\begin{equation}
 b_0=\operatorname{cl}(1),\quad
 b_1=\operatorname{cl}(-z_0/3),\quad
 b_2=\operatorname{cl}(z_0\ast_s z_0/9).
\end{equation}
The product inside the last representative is taken in \(\mathcal{S}_A\),
not in its cohomology. Here \(\psi(\delta)=\psi(v_{0})=0\), so
\(z_0\ast_s z_0=a_0^2[2\delta]\). Specializing this basis gives the
filtered vector-space isomorphism
\begin{equation}\label{eq:p2basis}
 b_0|_{s=1}\longmapsto1,\quad
 b_1|_{s=1}\longmapsto h,\quad
 b_2|_{s=1}\longmapsto h^2.
\end{equation}

\section{Example: \texorpdfstring{\(Z=\mathbf{P}^2\)}{Z=P2} and its resolved toric mirror}
Here ``mirror of \(\mathbf{P}^2\)'' means the toric ambient variety
obtained by polar duality of the anticanonical polytope, followed by its
crepant toric resolution. 

\subsection{The cubic and the two polygons}
Let \([X\mathpunct{:}Y\mathpunct{:}Z]\) be homogeneous coordinates on \(\mathbf{P}^2\), and
let \(P_a(X,Y,Z)\) be a generic cubic with all ten coefficients nonzero.
Take \(x=X/Z\), \(y=Y/Z\) on its maximal torus, and set
\begin{equation}
 f_a(x,y)=\frac{P_a(x,y,1)}{xy},\quad
 Y_a=\{f_a=0\}\subset(\mathbf{C}^{\ast})^2.
\end{equation}
The anticanonical polygon and its dual are given by
\begin{equation}
 \begin{aligned}
 \Delta&=\operatorname{conv}\{(-1,-1),(2,-1),(-1,2)\},\\
 \Delta^\vee&=\operatorname{conv}\{(1,0),(0,1),(-1,-1)\}.
 \end{aligned}
\end{equation}
The normal fan of \(\Delta\) is the fan of \(\mathbf{P}^2\).
The singular mirror ambient surface \(\check Z_{\mathrm{sing}}\)
has the fan over the faces of \(\Delta\), equivalently the normal fan
of \(\Delta^\vee\). Its three two-dimensional cones have determinant
three. It is the toric cubic surface
\begin{equation}
 \check Z_{\mathrm{sing}}
 \cong\{ABC=W^3\}\subset\mathbf{P}^3,
\end{equation}
with three \(A_2\) singularities. Subdividing its boundary edges at
all lattice points gives the smooth crepant resolution \(\check Z\).

\subsection{The fan and its cohomology ring}
List the boundary lattice points of \(\Delta\) in cyclic order, giving
\begin{equation}\label{eq:ninerays}
 \begin{aligned}
 \rho_1&=(-1,-1),&\rho_2&=(0,-1),&\rho_3&=(1,-1),\\
 \rho_4&=(2,-1),&\rho_5&=(1,0),&\rho_6&=(0,1),\\
 \rho_7&=(-1,2),&\rho_8&=(-1,1),&\rho_9&=(-1,0).
 \end{aligned}
\end{equation}
\(\check Z\) is a smooth projective toric surface with these nine rays, and
\begin{equation}
 \dim \mathrm{H}^0(\check Z)=1,\quad
 \dim \mathrm{H}^2(\check Z)=7,\quad
 \dim \mathrm{H}^4(\check Z)=1.
\end{equation}
Write \(D_i\) for the divisor of \(\rho_i\).
An explicit ring presentation is
\begin{equation}\label{eq:mirrorp2ring}
 \mathrm{H}^{\mathrm{even}}(\check Z;\mathbf{C})
 =\mathbf{C}[D_1,\ldots,D_9]/(I_{\mathrm{SR}},L_x,L_y),
\end{equation}
where \(\deg D_i=2\), \(I_{\mathrm{SR}}\) is generated by
\(D_iD_j\) for distinct nonadjacent rays, and
\begin{equation}
 \begin{aligned}
 L_x&=-D_1+D_3+2D_4+D_5-D_7-D_8-D_9,\\
 L_y&=-D_1-D_2-D_3-D_4+D_6+2D_7+D_8.
 \end{aligned}
\end{equation}
If \(p\in \mathrm{H}^4(\check Z;\mathbf{C})\) is normalized by
\(\int_{\check Z}p=1\), then
\begin{equation}\label{eq:intersectionmirror}
 D_iD_{i+1}=p,\quad
 D_i^2=
 \begin{cases}
 -p,&i=1,4,7,\\
 -2p,&i=2,3,5,6,8,9.
 \end{cases}
\end{equation}
The six \((-2)\)-curves form the three exceptional \(A_2\) chains.
With \(e=\sum_{i=1}^9D_i=c_1(-K_{\check Z})\), we obtain
\begin{equation}
 eD_i=
 \begin{cases}
 p,&i=1,4,7,\\
 0,&i=2,3,5,6,8,9,
 \end{cases}
 \quad e^2=3p.
\end{equation}
Consequently the cup product by \(e\) has image precisely 
\(\mathbf{C} e\oplus\mathbf{C} p\) and is of dimension two. This identifies the
anticanonical operation on the nine-dimensional toric endpoint explicitly.

\subsection{The cone algebra and an explicit degeneration}
The homogenized lattice points are \(\delta=(0,0,1)\) and
\(v_i=(\rho_i,1)\), \(1\le i\le9\). Its semigroup algebra is the
third Veronese algebra:
\begin{equation}\label{eq:veronese}
 \begin{aligned}
 S&\cong\bigoplus_{k\ge0}\mathbf{C}[X,Y,Z]_{3k},\\
 [(m_1,m_2,k)]&\longmapsto
 X^{m_1+k}Y^{m_2+k}Z^{k-m_1-m_2}.
 \end{aligned}
\end{equation}
The summand of polynomial degree \(3k\) has semigroup degree \(k\).
The inequalities defining \(k\Delta\) say exactly that the three
displayed exponents are nonnegative. In particular, \([\delta]\)
corresponds to \(XYZ\), and \(F_a=tf_a\) corresponds to \(P_a\).
The three multiplication symbols correspond to
\begin{equation}
 G_x=XP_{a,X}-P_a,~
 G_y=YP_{a,Y}-P_a,~\mbox{and}~G_t=P_a.
\end{equation}
Euler's identity \(XP_{a,X}+YP_{a,Y}+ZP_{a,Z}=3P_a\) implies
\begin{equation}
 R_a\cong S/(XP_{a,X},YP_{a,Y},ZP_{a,Z}),~\mbox{and}~
 \operatorname{Hilb}(R_a;T)=1+7T+T^2.
\end{equation}

Use the star triangulation with triangles
\(\operatorname{Conv}(0,\rho_i,\rho_{i+1})\). A concrete integral
convex support function for its homogenized fan is specified by
\begin{equation}\label{eq:mirrorheights}
 \psi(\delta)=0,~\mbox{and}~
 \psi(v_i)=
 \begin{cases}
 4,&i=1,4,7,\\
 3,&i=2,3,5,6,8,9,
 \end{cases}
\end{equation}
and linear extension on the star cones. To check convexity, let
\(h_i=\psi(v_i)\) and write
\(\rho_{i-1}+\rho_{i+1}=a_i'\rho_i\). Then \(a_i'=1\) at
\(i=1,4,7\), and \(a_i'=2\) otherwise. The bending inequalities are
\begin{equation}
 h_{i-1}+h_{i+1}-a_i'h_i
 =\begin{cases}2,&i=1,4,7,\\1,&\text{otherwise},\end{cases}
 \quad >0.
\end{equation}
Thus the linearity domains are precisely the nine star cones.

Equations \eqref{eq:product} and \eqref{eq:familyD} now give the
family explicitly. At \(s=0\) its algebra is
\begin{equation}
 S_{\check\Sigma}=\mathbf{C}[u_0,u_1,\ldots,u_9]/I_{\mathrm{SR}},
\end{equation}
and the three symbols impose
\begin{equation}
 z_0+\sum_{i=1}^9z_i=0,~
 \sum_{i=1}^9(\rho_i)_1z_i=0,~
 \sum_{i=1}^9(\rho_i)_2z_i=0,
 ~ z_i=a_i u_i.
\end{equation}
Their quotient is \eqref{eq:mirrorp2ring}, with \(z_i\mapsto D_i\)
and \(z_0\mapsto-e\). Theorem~\ref{thm:comparison} gives a free
rank-nine top cohomology module after the stated localization, and a
filtered vector-space isomorphism with the toric cohomology ring.

There is also a direct dimension check on the relative side. For a
nondegenerate cubic, \(Y_a\) is a smooth elliptic curve with nine
points removed, so \(\dim \mathrm{H}^1(Y_a)=10\). The restriction
\(\mathrm{H}^1(T)\to \mathrm{H}^1(Y_a)\) has rank two: small loops about punctures on
two different coordinate divisors have linearly independent winding
vectors in \(T\). Since \(\mathrm{H}^2(Y_a)=0\), the exact sequence of the
pair yields
\begin{equation}
 \dim \mathrm{H}^2(T,Y_a)=\bigl(10-2\bigr)+1=9,
\end{equation}
in agreement with \(1+7+1\) on \(\check Z\). Using the unresolved
three-ray mirror surface and its ordinary cohomology would miss the
six exceptional classes. The resolution gives the full endpoint
using ordinary cohomology. A singular model would instead require its
orbifold sectors.

\section{A singular MPCP example: 
\texorpdfstring{\(\mathbf{P}(1,1,1,1,2)\)}{P(1,1,1,1,2)}}
\label{sec:example-p11112}
This example shows that the distinction between ordinary and orbifold
cohomology persists even after an MPCP subdivision. In \(M\cong \mathbf{Z}^4\), set
\begin{equation}
 \begin{gathered}
 \rho_i=e_i~\mbox{for}~1\le i\le4,~\mbox{and}~\rho_5=(-1,-1,-1,-2),\\
 \end{gathered}
\end{equation}
and let
\begin{equation}
   \Delta=\operatorname{Conv}\{\rho_1,\ldots,\rho_5\}.
\end{equation}
The relation
\begin{equation}
 \rho_1+\rho_2+\rho_3+2\rho_4+\rho_5=0
\end{equation}
identifies the fan over the faces of \(\Delta\) with that of
\(\check Z=\mathbf{P}(1,1,1,1,2)\), after reordering the weights.
The primitive inward facet normals are
\begin{equation}
 \begin{gathered}
 (5,-1,-1,-1),\quad(-1,5,-1,-1),\quad(-1,-1,5,-1),\\
 (-1,-1,-1,2),\quad(-1,-1,-1,-1).
 \end{gathered}
\end{equation}
Thus \(\Delta\) is reflexive whose lattice points are precisely
\begin{equation}
 \Delta\cap M=\{0,\rho_1,\rho_2,\rho_3,\rho_4,\rho_5\}.
\end{equation}
The star subdivision with center \(0\) is regular and
maximal, hence gives an MPCP model already. Let 
\(\sigma=\mathrm{Cone}\{\rho_{1},\ldots,\hat{\rho}_{i},\ldots,\rho_{5}\}\).
The multiplicities of \(\sigma_{1},\ldots,\sigma_{5}\) are
\begin{equation}
 1,\quad1,\quad1,\quad2,\quad1.
\end{equation}
The cone omitting \(\rho_4\) gives the isolated terminal Gorenstein
quotient singularity \((1,1,1,1)/2\). 

The normalized volume is six and correspondingly we have
\begin{equation}
 h_\Delta^{\ast}(T)=1+T+2T^2+T^3+T^4.
\end{equation}
Take a nondegenerate Laurent polynomial with these six exponents:
\begin{equation}
 f_a=a_0+a_1x_1+a_2x_2+a_3x_3+a_4x_4
       +\frac{a_5}{x_1x_2x_3x_4^2}.
\end{equation}
Then
\begin{equation}
 \dim_\mathbf{C}\mathrm{H}^4(T,Y_a)=6,
 \quad
 \mathrm{H}^\bullet(\check Z;\mathbf{C})\cong\mathbf{C}[h]/(h^5),
 \quad\deg h=2.
\end{equation}
Ordinary cohomology has dimension five, so it cannot be the full
endpoint of this cone complex.

The missing class can be exhibited directly. In the singular cone,
\begin{equation}
 b=\frac12(\rho_1+\rho_2+\rho_3+\rho_5)=-e_4
\end{equation}
is the nonzero box element. In the homogenized fan it becomes
\begin{equation}
 \widetilde b=(b,2)
 =\frac12(v_1+v_2+v_3+v_5),\quad v_i=(\rho_i,1).
\end{equation}
Its age is two. It contributes one point-sector class \(\varepsilon\)
in Chen--Ruan degree four. As a graded vector space,
\begin{equation}
 \mathrm{H}_{\mathrm{CR}}^\bullet(\check{\mathcal Z};\mathbf{C})
 =\mathrm{H}^\bullet(\check Z;\mathbf{C})\oplus\mathbf{C}\varepsilon,
 \quad\deg\varepsilon=4.
\end{equation}
Its dimension and grading agree with
\(h_\Delta^{\ast}(T)\) and the rank-six relative cohomology.

The anticanonical class is \(e=6h\). Its restriction to the point
sector is zero over \(\mathbf{C}\), so
\begin{equation}
 e\cup\varepsilon=0,\quad
 \operatorname{im}(e\cup-)=\operatorname{Span}_{\mathbf{C}}\{h,h^2,h^3,h^4\}.
\end{equation}
Thus the additional sector changes the full cohomology space while
the anticanonical image in this example has dimension four.


\section{Example: 
\texorpdfstring{\(\check{\mathcal{Z}}:=\mathbf{P}(1,2,1)\times\mathbf{P}^1\)}{}}
\label{appD:comparison}

We compute the extended hypersurface series 
of CCIT
in \cites{2015-Coates-Corti-Iritani-Tseng-a-mirror-theorem-for-toric-stacks,2019-Coates-Corti-Iritani-Tseng-some-applications-of-the-mirror-theorem-for-toric-stacks} and compare it with the raw indexed series for
\begin{equation}
   \Delta=\operatorname{Conv}\{(1,0,0),(0,1,0),(-1,-2,0),
   (0,0,1),(0,0,-1)\}.
\end{equation}
The distinction between the box element and its interior translate is
essential in this comparison.

Let \(\rho_1,\ldots,\rho_5\) be the displayed vertices, in that order.
The fan consisting of cones over the faces of \(\Delta\) defines
\begin{equation}
  \check{\mathcal{Z}}:=\mathbf{P}(1,2,1)\times\mathbf{P}^1.
\end{equation}
Let \(H\) (resp.~\(P\)) be the pullback of
\(c_1(\mathcal O_{\mathbf P(1,2,1)}(1))\)
(resp.~\(c_1(\mathcal O_{\mathbf P^1}(1))\)). Then
\begin{equation}
\label{appD:divisors}
 (D_1,D_2,D_3,D_4,D_5)=(H,2H,H,P,P),~\mbox{and}~
 e=4H+2P.
\end{equation}
The untwisted cohomology is \(\mathbf{C}[H,P]/(H^3,P^2)\).

\subsection{CCIT's \(S_{\mathrm{ext}}\)-extended \(I\)-function}
There is one nontrivial box element,
\begin{equation}
  b=(0,-1,0)=\frac{1}{2}(\rho_1+\rho_3),~\mbox{and}~
 \operatorname{age}(b)=1.
\end{equation}
If $\mathbf1_b$ denotes its
unit, then $H\cup\mathbf1_b=0$ and $P^2\cup\mathbf1_b=0$.

Set $v_i=(\rho_i,1)$ and $\delta=(0,1)$. We distinguish
\begin{equation}
\label{appD:lifts}
 b\equiv (b,1)=\frac{1}{2}(v_1+v_3),~\mbox{and}~
 \tilde{b}:=\delta+\beta=(b,2).
\end{equation}
The point \(\beta\) is on the boundary 
of \(C=\operatorname{Cone}(\Delta\times\{1\})\) and
therefore \(\tilde{b}\) is in the interior. 

Consider
\begin{equation}
   f_a=a_0+a_1x+a_2y+a_3x^{-1}y^{-2}+a_4z+a_5z^{-1}
\end{equation}
and assume $a_i\ne0$. Choose compatible logarithms and put
\begin{equation}\label{appD:q}
 q_1=\frac{a_1a_2^2a_3}{a_0^4},~
 q_2=\frac{a_4a_5}{a_0^2},~\mbox{and}~A=a_1a_3.
\end{equation}
Thus \(a^D=q_1^Hq_2^P\), where \(D_0=-e\).

Take the extension set $S_{\mathrm{ext}}=\{b\}$.
The \(\mathbf{Q}\) splitting of the extended relation sequence gives
\begin{equation}\label{appD:degrees}
 \lambda(d,m,k)=\left(d-\frac{k}{2},2d,d-\frac{k}{2},m,m,k\right),~\mbox{with}~
 d\in\frac{1}{2}\mathbf Z_{\ge0},~m,k\in\mathbf Z_{\ge0}.
\end{equation}
Here \(d\) stands for the degree from \(\mathbf{P}(1,2,1)\), 
\(m\) for the degree from \(\mathbf{P}^{1}\) and \(k\)
for the extended box.
The extension vector is $(-1/2,0,-1/2,0,0,1)$, since
$b=(\rho_1+\rho_3)/2$.
Define
\begin{equation}
  N=4d+2m,~\mbox{and}~
 \epsilon(d,k)=
 \begin{cases}0,&2d-k\in2\mathbf Z,\\
 1,&2d-k\in2\mathbf Z+1.
 \end{cases}
\end{equation}
The reduction function assigns $\lambda(d,m,k)$ to sector $0$ or $b$
according to $\epsilon(d,k)$. Set \(\mathbf{1}_0=1\) 
and \(\mathbf{1}_1=\mathbf{1}_b\).
Notice that $N$ is always an even nonnegative integer.

Let \(i\colon Y\rightarrow \check{\mathcal{Z}}\) be a 
smooth generic anticanonical hypersurface. 

For $l\in\mathbf Q$, define 
\begin{equation}
   \mathcal P(u,l;z)=
 \frac{\prod_{a\le0,~a-l\in\mathbf Z}(u+az)}
 {\prod_{a\le l,~a-l\in\mathbf Z}(u+az)}.
\end{equation}
The ambient-valued hypersurface modification is
\begin{equation}\label{appD:I-z}
\begin{split}
 \mathcal I(q_1,q_2,x,z)
 ={}&zq_1^{H/z}q_2^{P/z}
 \sum_{d,m,k}\frac{q_1^dq_2^mx^k}{k!z^k}
 \mathcal{P}(H,d-k/2;z)^2\\
 \qquad &\mathcal{P}(2H,2d;z)
 \cdot\mathcal{P}(P,m;z)^2
 \prod_{r=1}^{N}(e+rz)\,\mathbf1_{\epsilon(d,k)}.
\end{split}
\end{equation}
The hypersurface series is $I_Y=i^{\ast}\mathcal{I}$.
All sums are in the range in \eqref{appD:degrees}.

We now specialize to $z=1$ to compare with the raw series. Put
\begin{align}
 G_\epsilon&=\Gamma(1+H-\epsilon/2)^2
       \Gamma(1+2H)\Gamma(1+P)^2,\label{appD:G}\\
 A_{d,m,k}&=\Gamma(1+H+d-k/2)^2
       \Gamma(1+2H+2d)\Gamma(1+P+m)^2.\label{appD:denom}
\end{align}
Equation~\eqref{appD:I-z} becomes
\begin{equation}\label{appD:I-gamma}
 \mathcal I(q_1,q_2,x,1)=q_1^Hq_2^P
 \sum_{d,m,k}\frac{q_1^dq_2^mx^k}{k!}
 \frac{G_{\epsilon(d,k)}\Gamma(1+e+N)}
 {A_{d,m,k}\Gamma(1+e)}\mathbf1_{\epsilon(d,k)}.
\end{equation}
Write $\mathcal I_k=\partial_x^k\mathcal I|_{x=0,z=1}$, so that
$\mathcal I=\sum_{k\ge0}x^k\mathcal I_k/k!$ at $z=1$.

\subsection{The raw series \(\mathcal{B}_{c}^{\mathrm{raw}}(a)\)}
For \(c=\delta+k\beta\), the exponent vectors solving
\(\sum_{i=0}^5\ell_{i} v_i=-c\) are
\begin{equation}
   \ell=\left(-N-1,d-\frac{k}{2},2d,d-\frac{k}{2},m,m\right).
\end{equation}
Define the normalized raw series
\begin{equation}
   \mathcal{R}_k=a_0A^{k/2}\mathcal{B}_{\delta+k\beta}^{\mathrm{raw}}.
\end{equation}
Then
\begin{equation}
\label{appD:raw}
 \mathcal{R}_k=q_1^Hq_2^P\sum_{d,m}
 \frac{q_1^dq_2^m}
 {\Gamma(-e-N)A_{d,m,k}}\mathbf1_{\epsilon(d,k)}.
\end{equation}
Let $\mathsf{C}$ be the (sectorwise) linear automorphism of
$\mathrm H^\bullet_{\mathrm{CR}}(
\check{\mathcal{Z}};\mathbf C)$ given by
\begin{equation}\label{appD:C}
 \mathsf{C}|_\epsilon=\Gamma(1-e)\cdot G_\epsilon.
\end{equation}
Since $N$ is even, Euler's reflection formula gives
\begin{equation}\label{appD:reflection}
 \frac1{\Gamma(-e-N)}
 =-\frac{\sin(\pi e)}{\pi}\Gamma(1+e+N)
 =-\frac{e\,\Gamma(1+e+N)}
 {\Gamma(1-e)\Gamma(1+e)}.
\end{equation}
Comparison of \eqref{appD:I-gamma} and \eqref{appD:raw} therefore
gives
\begin{equation}
\label{appD:main}
      a_0A^{k/2}\mathcal{B}_{\delta+kb}^{\mathrm{raw}}
      \equiv -\mathsf{C}^{-1}\bigl(e\cup\mathcal{I}_k\bigr),
      ~k\ge0.
\end{equation}
In particular,
\begin{align}
 a_0\mathcal{B}_\delta^{\mathrm{raw}}
   &=-\mathsf{C}^{-1}(e\cup\mathcal{I}_0),\label{appD:delta}\\
 a_0A^{1/2}\mathcal{B}_{\tilde{b}}^{\mathrm{raw}}
   &=-\mathsf{C}^{-1}(e\cup\mathcal{I}_1).\label{appD:box-interior}
\end{align}
Thus the same degree-independent Gamma normalization applies to both
interior indices. Equivalently,
\begin{equation}\label{appD:packaged}
 e\cup\mathcal{I}(q_1,q_2,x,1)
 =-\mathsf{C}\sum_{k\ge0}\frac{x^k}{k!}
       a_0A^{k/2}\mathcal{B}_{\delta+kb}^{\mathrm{raw}}.
\end{equation}



Let us check our calculation on the box sector. Note
that on the box sector $e=2P$ and
\begin{equation}
   \mathsf{C}|_b=\pi\Gamma(1-2P)\Gamma(1+P)^2=\pi,
\end{equation}
because \(P^2=0\). The restriction 
of \(-K_{\check{\mathcal{Z}}}\) 
to its coarse
\(\mathbf{P}^1\) is \(\mathcal O(2)\), 
so the hypersurface restricted to
this sector \(Y_b\) consists of two
stacky points; in particular \(i^{\ast}P=0\) on \(Y_b\).
The coefficient of \(x\) is
\begin{equation}
\label{appD:box-I}
\begin{aligned}
   (i^{\ast}\mathcal{I}_1)_b&=
 \pi\sum_{d,m\ge0}
 \frac{(4d+2m)!}{\Gamma(d+\tfrac12)^2(2d)!(m!)^2}
 q_1^dq_2^m\,i^{\ast}\mathbf{1}_b\\
 &=(1+48q_1+2q_2+\cdots)\, i^{\ast}\mathbf{1}_b.
\end{aligned}
\end{equation}
Here \(d\) is integral because \(\epsilon(d,1)=1\).
The corresponding interior raw component is
\begin{equation}
\label{appD:box-raw}
 \begin{aligned}
   (\mathcal{R}_1)_{b}&=-2P\sum_{d,m\ge0}
 \frac{(4d+2m)!}{\Gamma(d+1/2)^2(2d)!(m!)^2}
 q_1^dq_2^m\,\mathbf{1}_b\\
 &=-\frac{2P}{\pi}(1+48q_1+2q_2+\cdots)\,\mathbf{1}_b.
 \end{aligned}
\end{equation}
This is exactly \eqref{appD:box-interior} on the box sector.
The pushforward is nonzero although restriction of this ambient 
$P\mathbf1_b$-valued class back to the zero-dimensional sector is zero.

For the distinguished index \(\delta\), the box degrees are 
$d\in 1/2+\mathbf{Z}_{\ge0}$, and
\begin{equation}
\begin{aligned}
  (\mathcal{R}_0)_{b}
  &=-2P\sum_{\substack{d\in\frac{1}{2}+\mathbf{Z}_{\ge0}\\m\ge0}}
  \frac{(4d+2m)!}{\Gamma(1+d)^2(2d)!(m!)^2}
  q_1^dq_2^m\mathbf{1}_b\\
  &=-\frac{16P}{\pi}q_1^{1/2}\mathbf{1}_b+\cdots.
\end{aligned}
\end{equation}
Thus, unlike the example in Appendix \ref{sec:example-p11112}
where the anticanonical hypersurface avoids the
stacky locus, the box sector contributes nontrivially to both of these
Euler-pushed-forward hypersurface series.

\bibliographystyle{amsxport}
\bibliography{reference.bib}

@article{Ganatra:2024aa,
	author = {Sheel Ganatra and Andrew Hanlon and Jeff Hicks and Daniel Pomerleano and Nick Sheridan},
	eprint = {2406.05272},
	month = {06},
	title = {Homological mirror symmetry for {B}atyrev mirror pairs},
	url = {https://arxiv.org/pdf/2406.05272.pdf},
	year = {2024}}

@article{Ganatra:2025aa,
	author = {Sheel Ganatra and Nick Sheridan},
	eprint = {2511.04498},
	month = {11},
	title = {The cyclic open-closed map and variations of {H}odge structures},
	url = {https://arxiv.org/pdf/2511.04498.pdf},
	year = {2025}}

@incollection{2006-Hosono-central-charges-symplectic-forms-and-hypergeometric-series-in-local-mirror-symmetry,
	author = {Hosono, Shinobu},
	booktitle = {Mirror symmetry {V}. {P}roceedings of the {BIRS} workshop on {C}alabi--{Y}au varieties and mirror symmetry, {D}ecember 6--11, 2003},
	isbn = {0-8218-4251-X},
	language = {English},
	pages = {405--439},
	publisher = {Providence, RI: American Mathematical Society (AMS); Somerville, MA: International Press},
	title = {Central charges, symplectic forms, and hypergeometric series in local mirror symmetry},
	year = {2006},
	zbl = {1114.14025},
	zbmath = {5153044}}

@article{2011-Iritani-quantum-cohomology-and-periods,
	author = {Iritani, Hiroshi},
	doi = {10.5802/aif.2798},
	fjournal = {Annales de l'Institut Fourier},
	issn = {1777-5310},
	journal = {Ann. Inst. Fourier},
	number = {7},
	pages = {2909--2958},
	publisher = {MathDoc/Centre Mersenne},
	title = {Quantum Cohomology and Periods},
	url = {http://dx.doi.org/10.5802/aif.2798},
	volume = {61},
	year = {2011}}

@article{2019-Coates-Corti-Iritani-Tseng-some-applications-of-the-mirror-theorem-for-toric-stacks,
	author = {Coates, Tom and Corti, Alessio and Iritani, Hiroshi and Tseng, Hsian-Hua},
	doi = {10.4310/atmp.2019.v23.n3.a4},
	fjournal = {Advances in Theoretical and Mathematical Physics},
	issn = {1095-0753},
	journal = {Adv. Theor. Math. Phys.},
	number = {3},
	pages = {767--802},
	publisher = {International Press of Boston},
	title = {Some applications of the mirror theorem for toric stacks},
	url = {http://dx.doi.org/10.4310/ATMP.2019.v23.n3.a4},
	volume = {23},
	year = {2019}}

@article{1996-Givental-equivariant-gromov-witten-invariants,
	author = {Givental, Alexander B.},
	doi = {10.1155/S1073792896000414},
	fjournal = {IMRN. International Mathematics Research Notices},
	issn = {1073-7928},
	journal = {Int. Math. Res. Not.},
	language = {English},
	number = {13},
	pages = {613--663},
	title = {Equivariant {Gromov}--{Witten} invariants},
	volume = {1996},
	year = {1996},
	zbl = {0881.55006},
	zbmath = {976024}}

@article{2004-Lee-Pandharipande-a-reconstruction-theorem-in-quantum-cohomology-and-quantum-k-theory,
	author = {Lee, Y.-P. and Pandharipande, R.},
	doi = {10.1353/ajm.2004.0049},
	fjournal = {American Journal of Mathematics},
	issn = {1080-6377},
	journal = {Amer. J. Math.},
	month = Dec,
	number = {6},
	pages = {1367--1379},
	publisher = {Project MUSE},
	title = {A reconstruction theorem in quantum cohomology and quantum {K}-theory},
	url = {http://dx.doi.org/10.1353/ajm.2004.0049},
	volume = {126},
	year = {2004}}

@article{2007-Coates-Givental-quantum-riemann-roch-lefschetz-and-serre,
	author = {Coates, Thomas and Givental, Alexander},
	doi = {10.4007/annals.2007.165.15},
	fjournal = {Annals of Mathematics},
	issn = {0003-486X},
	journal = {Ann. of Math.},
	month = Jan,
	number = {1},
	pages = {15--53},
	publisher = {Annals of Mathematics},
	title = {Quantum {R}iemann--{R}och, {L}efschetz and {S}erre},
	url = {http://dx.doi.org/10.4007/annals.2007.165.15},
	volume = {165},
	year = {2007}}

@book{2017-Huber-Muller-Stach-periods-and-nori-motives,
	author = {Huber, Annette and M\"{u}ller-Stach, Stefan},
	isbn = {978-3-319-50925-9; 978-3-319-50926-6},
	mrclass = {14F42 (11G05 14C15 14C30 19E15 32G20)},
	mrnumber = {3618276},
	mrreviewer = {Alberto\ Collino},
	note = {With contributions by Benjamin Friedrich and Jonas von Wangenheim},
	pages = {xxiii+372},
	publisher = {Springer, Cham},
	series = {Ergebnisse der Mathematik und ihrer Grenzgebiete. 3. Folge. A Series of Modern Surveys in Mathematics [Results in Mathematics and Related Areas. 3rd Series. A Series of Modern Surveys in Mathematics]},
	title = {Periods and {N}ori motives},
	volume = {65},
	year = {2017}}

@article{2003-Borisov-Mavlyutov-string-cohomology-of-calabi-yau-hypersurfaces-via-mirror-symmetry,
	author = {Borisov, Lev A. and Mavlyutov, Anvar R.},
	doi = {10.1016/s0001-8708(03)00007-0},
	fjournal = {Advances in Mathematics},
	issn = {0001-8708},
	journal = {Adv. Math.},
	month = Dec,
	number = {1},
	pages = {355--390},
	publisher = {Elsevier BV},
	title = {String cohomology of {C}alabi--{Y}au hypersurfaces via mirror symmetry},
	url = {http://dx.doi.org/10.1016/S0001-8708(03)00007-0},
	volume = {180},
	year = {2003}}

@incollection{1998-Stienstra-resonant-hypergeometric-systems-and-mirror-symmetry,
	author = {Stienstra, Jan},
	booktitle = {Integrable systems and algebraic geometry ({K}obe/{K}yoto, 1997)},
	isbn = {981-02-3266-7},
	mrclass = {14D05 (14J32 14M25 14N35 33C20)},
	mrnumber = {1672077},
	mrreviewer = {Bruce\ Hunt},
	pages = {412--452},
	publisher = {World Sci. Publ., River Edge, NJ},
	title = {Resonant hypergeometric systems and mirror symmetry},
	year = {1998}}

@incollection{2013-Horja-toric-deligne-mumford-stacks-and-the-better-behaved-version-of-the-gkz-hypergeometric-system,
	author = {Horja, R. Paul},
	booktitle = {Strings, gauge fields, and the geometry behind},
	isbn = {978-981-4412-54-4},
	mrclass = {14D23 (14M25 33C70 81T25)},
	mrnumber = {3289345},
	mrreviewer = {V.\ A.\ Golubeva},
	pages = {329--348},
	publisher = {World Sci. Publ., Hackensack, NJ},
	title = {Toric {D}eligne--{M}umford stacks and the better behaved version of the {GKZ} hypergeometric system},
	year = {2013}}

@article{1991-Candelas-de-la-Ossa-Greene-Parkes-a-pair-of-calabi-yau-manifolds-as-an-exactly-soluable-superconformal-theory,
	author = {Candelas, Philip and de la Ossa, Xenia C. and Green, Paul S. and Parkes, Linda},
	doi = {10.1016/0550-3213(91)90292-6},
	fjournal = {Nuclear Physics. B. Theoretical, Phenomenological, and Experimental High Energy Physics. Quantum Field Theory and Statistical Systems},
	issn = {0550-3213},
	journal = {Nuclear Phys. B},
	mrnumber = {1115626},
	number = {1},
	pages = {21--74},
	title = {A pair of {C}alabi--{Y}au manifolds as an exactly soluble superconformal theory},
	volume = {359},
	year = {1991}}

@article{1971-Deligne-theorie-de-hodge-ii,
	author = {Deligne, Pierre},
	fjournal = {Institut des Hautes \'{E}tudes Scientifiques. Publications Math\'{e}matiques},
	issn = {0073-8301},
	journal = {Publ. Math. Inst. Hautes {\'E}tudes},
	mrnumber = {498551},
	number = {40},
	pages = {5--57},
	title = {Th\'{e}orie de {H}odge. {II}},
	year = {1971}}

@article{1990-Greene-Plesser-duality-in-calabi-yau-moduli-space,
	author = {Greene, B. R. and Plesser, M. R.},
	doi = {10.1016/0550-3213(90)90622-K},
	issn = {0550-3213},
	journal = {Nuclear Physics. B. Theoretical, Phenomenological, and Experimental High Energy Physics. Quantum Field Theory and Statistical Systems},
	mrnumber = {1059831},
	number = {1},
	pages = {15--37},
	title = {Duality in {C}alabi--{Y}au moduli space},
	volume = {338},
	year = {1990}}

@article{1993-Batyrev-variations-of-the-mixed-hodge-structure-of-affine-hypersurfaces-in-algebraic-tori,
	author = {Batyrev, Victor V.},
	doi = {10.1215/S0012-7094-93-06917-7},
	fjournal = {Duke Mathematical Journal},
	issn = {0012-7094},
	journal = {Duke Math. J.},
	mrnumber = {1203231},
	number = {2},
	pages = {349--409},
	title = {Variations of the mixed {H}odge structure of affine hypersurfaces in algebraic tori},
	volume = {69},
	year = {1993}}

@article{1994-Batyrev-dual-polyhedra-and-mirror-symmetry-for-calabi-yau-hypersurfaces-in-toric-varieties,
	author = {Victor V. Batyrev},
	fjournal = {Journal of Algebraic Geometry},
	journal = {J. Algebraic Geom.},
	pages = {493-545},
	title = {{Dual polyhedra and mirror symmetry for Calabi--Yau hypersurfaces in toric varieties}},
	volume = {3},
	year = {1994}}

@article{1995-Hosono-Klemm-Theisen-Yau-mirror-symmetry-mirror-map-and-applications-to-calabi-yau-hypersurfaces,
	author = {Hosono, S. and Klemm, A. and Theisen, S. and Yau, S.-T.},
	fjournal = {Communications in Mathematical Physics},
	issn = {0010-3616},
	journal = {Comm. Math. Phys.},
	mrnumber = {1316509},
	number = {2},
	pages = {301--350},
	title = {Mirror symmetry, mirror map and applications to {C}alabi--{Y}au hypersurfaces},
	volume = {167},
	year = {1995}}

@article{1995-Hosono-Klemm-Theisen-Yau-mirror-symmetry-mirror-map-and-applications-to-complete-intersection-calabi-yau-spaces,
	author = {Hosono, S. and Klemm, A. and Theisen, S. and Yau, S.-T.},
	doi = {10.1016/0550-3213(94)00440-P},
	issn = {0550-3213},
	journal = {Nuclear Physics. B. Theoretical, Phenomenological, and Experimental High Energy Physics. Quantum Field Theory and Statistical Systems},
	mrnumber = {1319280},
	number = {3},
	pages = {501--552},
	title = {Mirror symmetry, mirror map and applications to complete intersection {C}alabi--{Y}au spaces},
	volume = {433},
	year = {1995}}

@article{1996-Batyrev-Borisov-mirror-duality-and-string-theoretic-hodge-numbers,
	author = {Victor V. Batyrev and Lev A. Borisov},
	fjournal = {Inventiones Mathematicae},
	journal = {Invent. Math.},
	pages = {183-203},
	title = {Mirror duality and string-theoretic {H}odge numbers},
	volume = {126},
	year = {1996}}

@incollection{1996-Batyrev-Borisov-on-calabi-yau-complete-intersections-in-toric-varieties,
	author = {Batyrev, Victor V. and Borisov, Lev A.},
	booktitle = {Higher-dimensional complex varieties ({T}rento, 1994)},
	mrnumber = {1463173},
	pages = {39--65},
	publisher = {de Gruyter, Berlin},
	title = {On {C}alabi--{Y}au complete intersections in toric varieties},
	year = {1996}}

@incollection{1998-Givental-a-mirror-theorem-for-toric-complete-intersections,
	author = {Givental, Alexander B.},
	booktitle = {Topological field theory, primitive forms and related topics ({K}yoto, 1996)},
	mrnumber = {1653024},
	pages = {141--175},
	publisher = {Birkh\"{a}user Boston, Boston, MA},
	series = {Progr. Math.},
	title = {A mirror theorem for toric complete intersections},
	volume = {160},
	year = {1998}}

@article{2000-Hosono-local-mirror-symmetry-and-type-IIA-monodromy-of-calabi-yau-manifolds,
	author = {Shinobu Hosono},
	fjournal = {Advances in Theoretical and Mathematical Physics},
	journal = {Adv. Theor. Math. Phys.},
	number = {2},
	pages = {335-376},
	title = {{Local mirror symmetry and type IIA monodromy of Calabi--Yau manifolds}},
	volume = {4},
	year = {2000}}

@article{2006-Borisov-Horja-mellin-barnes-integrals-as-fourier-mukai-transforms,
	author = {Borisov, Lev A. and Horja, R. Paul},
	doi = {10.1016/j.aim.2006.01.011},
	fjournal = {Advances in Mathematics},
	issn = {0001-8708},
	journal = {Adv. Math.},
	mrnumber = {2271990},
	number = {2},
	pages = {876--927},
	title = {Mellin--{B}arnes integrals as {F}ourier--{M}ukai transforms},
	volume = {207},
	year = {2006}}

@book{2011-Cox-Little-Schenck-toric-varieties,
	author = {Cox, David A. and Little, John B. and Schenck, Henry K.},
	doi = {10.1090/gsm/124},
	isbn = {978-0-8218-4819-7},
	mrnumber = {2810322},
	pages = {xxiv+841},
	publisher = {American Mathematical Society, Providence, RI},
	series = {Graduate Studies in Mathematics},
	title = {Toric varieties},
	volume = {124},
	year = {2011}}

@article{2013-Lian-Yau-period-integrals-of-cy-and-general-type-complete-intersections,
	author = {Lian, Bong H. and Yau, Shing-Tung},
	doi = {10.1007/s00222-012-0391-6},
	fjournal = {Inventiones Mathematicae},
	issn = {0020-9910},
	journal = {Invent. Math.},
	mrnumber = {3004778},
	number = {1},
	pages = {35--89},
	title = {Period integrals of {CY} and general type complete intersections},
	volume = {191},
	year = {2013}}

@article{2013-Lian-Song-Yau-periodic-integrals-and-tautological-systems,
	author = {Lian, Bong H. and Song, Ruifang and Yau, Shing-Tung},
	doi = {10.4171/JEMS/396},
	fjournal = {Journal of the European Mathematical Society},
	issn = {1435-9855},
	journal = {J. Eur. Math. Soc. (JEMS)},
	mrnumber = {3055766},
	number = {4},
	pages = {1457--1483},
	title = {Periodic integrals and tautological systems},
	volume = {15},
	year = {2013}}

@article{2014-Bloch-Huang-Lian-Srinivas-Yau-on-the-holonomic-rank-problem,
	author = {Bloch, Spencer and Huang, An and Lian, Bong H. and Srinivas, Vasudevan and Yau, Shing-Tung},
	fjournal = {Journal of Differential Geometry},
	issn = {0022-040X},
	journal = {J. Differential Geom.},
	mrnumber = {3229048},
	number = {1},
	pages = {11--35},
	title = {On the holonomic rank problem},
	volume = {97},
	year = {2014}}

@article{2016-Huang-Lian-Zhu-period-integrals-and-riemann-hilbert-correspondence,
	author = {Huang, An and Lian, Bong H. and Zhu, Xinwen},
	fjournal = {Journal of Differential Geometry},
	issn = {0022-040X},
	journal = {J. Differential Geom.},
	mrnumber = {3557307},
	number = {2},
	pages = {325--369},
	title = {Period integrals and the {R}iemann--{H}ilbert correspondence},
	volume = {104},
	year = {2016}}

@article{1987-Gelfand-Graev-Zelevinski-holonomic-systems-of-equations-and-series-of-hypergeometric-type,
	author = {Gel'fand, Israel M. and Graev, Mark I. and Zelevinski\u{\i}, Andrei V.},
	fjournal = {Doklady Akademii Nauk SSSR},
	issn = {0002-3264},
	journal = {Dokl. Akad. Nauk SSSR},
	mrclass = {58G07 (22E45 32C38 32C42 33A30)},
	mrnumber = {902936},
	mrreviewer = {J. S. Joel},
	number = {1},
	pages = {14--19},
	title = {Holonomic systems of equations and series of hypergeometric type},
	url = {https://mathscinet.ams.org/mathscinet-getitem?mr=902936},
	volume = {295},
	year = {1987}}

@article{1989-Gelfand-Kapranov-Zelevinski-hypergeometric-functions-and-toral-manifolds,
	author = {Gel'fand, Israel M. and Kapranov, Mikhail M. and Zelevinski\u{\i}, Andrei V.},
	doi = {10.1007/BF01078777},
	fjournal = {Akademiya Nauk SSSR. Funktsional\cprime ny\u{\i} Analiz i ego Prilozheniya},
	issn = {0374-1990},
	journal = {Funktsional. Anal. i Prilozhen.},
	mrclass = {22E30 (14L32 32C42 33A30 58G07)},
	mrnumber = {1011353},
	mrreviewer = {A. Verona},
	number = {2},
	pages = {12--26},
	title = {Hypergeometric functions and toric varieties},
	url = {https://mathscinet.ams.org/mathscinet-getitem?mr=1011353},
	volume = {23},
	year = {1989}}

@article{1996-Hosono-Lian-Yau-gkz-generalized-hypergeometric-systems-in-mirror-symmetry-of-calabi-yau-hypersurfaces,
	author = {Hosono, Shinobu and Lian, Bong H. and Yau, Shing-Tung},
	fjournal = {Communications in Mathematical Physics},
	issn = {0010-3616},
	journal = {Comm. Math. Phys.},
	mrnumber = {1461942},
	number = {3},
	pages = {535--577},
	title = {G{KZ}-generalized hypergeometric systems in mirror symmetry of {C}alabi--{Y}au hypersurfaces},
	volume = {182},
	year = {1996}}

@article{1997-Lian-Liu-Yau-mirror-principle-i,
	author = {Lian, Bong H. and Liu, Kefeng and Yau, Shing-Tung},
	doi = {10.4310/AJM.1997.v1.n4.a5},
	fjournal = {Asian Journal of Mathematics},
	issn = {1093-6106},
	journal = {Asian J. Math.},
	mrclass = {14N10 (14D20 14J32)},
	mrnumber = {1621573},
	mrreviewer = {Andreas Gathmann},
	number = {4},
	pages = {729--763},
	title = {Mirror principle {I}},
	url = {https://mathscinet.ams.org/mathscinet-getitem?mr=1621573},
	volume = {1},
	year = {1997}}

@article{1997-Hosono-Lian-Yau-maximal-degeneracy-points-of-gkz-systems,
	author = {Shinobu Hosono and Bong H. Lian and Shing-Tung Yau},
	fjournal = {Journal of the American Mathematical Society},
	journal = {J. Amer. Math. Soc.},
	month = {April},
	number = {2},
	pages = {427-443},
	title = {{Maximal degeneracy points of GKZ systems}},
	volume = {10},
	year = {1997}}

@article{2005-Borisov-Chen-Smith-the-orbifold-chow-ring-of-toric-deligne-mumford-stacks,
	author = {Borisov, Lev A. and Chen, Linda and Smith, Gregory G.},
	doi = {10.1090/S0894-0347-04-00471-0},
	fjournal = {Journal of the American Mathematical Society},
	issn = {0894-0347},
	journal = {J. Amer. Math. Soc.},
	mrnumber = {2114820},
	number = {1},
	pages = {193--215},
	title = {The orbifold {C}how ring of toric {D}eligne--{M}umford stacks},
	volume = {18},
	year = {2005}}

@article{2010-Tseng-orbifold-quantum-riemann-roch-lefschetz-and-serre,
	author = {Tseng, Hsian-Hua},
	doi = {10.2140/gt.2010.14.1},
	fjournal = {Geometry \& Topology},
	issn = {1465-3060},
	journal = {Geom. Topol.},
	mrnumber = {2578300},
	number = {1},
	pages = {1--81},
	title = {Orbifold quantum {R}iemann--{R}och, {L}efschetz and {S}erre},
	volume = {14},
	year = {2010}}

@article{2015-Coates-Corti-Iritani-Tseng-a-mirror-theorem-for-toric-stacks,
	author = {Coates, Tom and Corti, Alessio and Iritani, Hiroshi and Tseng, Hsian-Hua},
	doi = {10.1112/S0010437X15007356},
	fjournal = {Compositio Mathematica},
	issn = {0010-437X},
	journal = {Compositio Math.},
	mrnumber = {3414388},
	number = {10},
	pages = {1878--1912},
	title = {A mirror theorem for toric stacks},
	volume = {151},
	year = {2015}}

@article{2020-Lee-Zhang-a-hypergeometric-systems-and-relative-cohomology,
	author = {Lee, Tsung-Ju and Zhang, Dingxin},
	doi = {10.1142/S0129167X2050113X},
	fjournal = {International Journal of Mathematics},
	issn = {0129-167X},
	journal = {Internat. J. Math.},
	mrnumber = {4192455},
	number = {13},
	pages = {2050113, 21},
	title = {{$A$}-hypergeometric systems and relative cohomology},
	volume = {31},
	year = {2020}}

@article{2021-Lian-Zhu-on-the-hyperplane-conjecture-for-periods-of-calabi-yau-hypersurfaces-in-pn,
	author = {Lian, Bong H. and Zhu, Minxian},
	doi = {10.4310/jdg/1620272942},
	fjournal = {Journal of Differential Geometry},
	issn = {0022-040X},
	journal = {J. Differential Geom.},
	mrclass = {14J33 (14D05)},
	mrnumber = {4255072},
	number = {1},
	pages = {101--146},
	title = {On the hyperplane conjecture for periods of {C}alabi--{Y}au hypersurfaces in {${\bf P}^n$}},
	url = {https://mathscinet.ams.org/mathscinet-getitem?mr=4255072},
	volume = {118},
	year = {2021}}
\end{document}